\newtheorem{theorem}{Theorem}[section]
\newtheorem{lemma}[theorem]{Lemma}
\newtheorem{corollary}[theorem]{Corollary}
\newtheorem{definition}[theorem]{Definition}
\newtheorem{remark}[theorem]{Remark}
\newtheorem{example}[theorem]{Example}
\newtheorem{assumption}[theorem]{Assumption}
\newcommand{\R}{\mathbb{R}}
\newcommand{\N}{\mathbb{N}}
\newcommand{\E}{\mathbb{E}}
\renewcommand{\P}{\mathds{P}}
\renewcommand{\epsilon}{\varepsilon}
\newcommandtwoopt\picausal[2][][] {
\ifthenelse{\isempty{#1}}
	{\ifthenelse{\isempty{#2}} {\widehat{\boldsymbol{\pi}}^N} {\widehat{\boldsymbol{\pi}}^{N,{#2}}}}
	{\ifthenelse{\isempty{#2}} {\widehat{\boldsymbol{\pi}}^N_{{#1}}} {\widehat{\boldsymbol{\pi}}^{N, {#2}}_{{,#1}}}}
}
\newcommandtwoopt\nucausal[2][][] {
\ifthenelse{\isempty{#1}}
	{\ifthenelse{\isempty{#2}} {\widehat{\boldsymbol{\nu}}^N} {\widehat{\boldsymbol{\nu}}^{N,{#2}}}}
	{\ifthenelse{\isempty{#2}} {\widehat{\boldsymbol{\nu}}^N_{{#1}}} {\widehat{\boldsymbol{\nu}}^{N, {#2}}_{{,#1}}}}
}
\newcommand\causal[2][]{
\ifthenelse{\isempty{#2}}  {{#1}^N_{\square} }
{{#1}^N_{\square {,#2}}} }
\newcommandtwoopt\muempirical[2][][]{
\ifthenelse{\isempty{#1}}
	{\ifthenelse{\isempty{#2}} {\widehat{\mu}^N} {\hat{\mu}^{N,{#2}}}}
	{\ifthenelse{\isempty{#2}} {\widehat{\mu}^N_{{#1}}} {\widehat{\mu}^{N, 					{#2}}_{{#1}}}}
}
\begin{document}

\title[A Wasserstein estimator for measuring association]{Measuring association with Wasserstein distances}

\date{\today}

\author{Johannes Wiesel}
\address{Johannes Wiesel\newline
Columbia University, Department of Statistics\newline
1255 Amsterdam Avenue\newline
New York, NY 10027, USA}
\email{johannes.wiesel@columbia.edu}

\keywords{Independence, measure of association, correlation, optimal transport, (causal) Wasserstein distance}

\begin{abstract}
\ Let $\pi\in \Pi(\mu,\nu)$ be a coupling between two probability measures $\mu$ and $\nu$ on a Polish space. In this article we propose and study a class of nonparametric measures of association between $\mu$ and $\nu$, which we call Wasserstein correlation coefficients. These coefficients are based on the Wasserstein distance between $\nu$ and the disintegration $\pi_{x_1}$ of $\pi$ with respect to the first coordinate. We also establish basic statistical properties of this new class of measures: we develop a statistical theory for strongly consistent estimators and determine their convergence rate in the case of compactly supported measures $\mu$ and $\nu$. Throughout our analysis we make use of the so-called adapted/bicausal Wasserstein distance, in particular we rely on results established in [Backhoff, Bartl, Beiglb\"ock, Wiesel. Estimating processes in adapted Wasserstein distance. 2020]. Our approach applies to probability laws on general Polish spaces. 
\end{abstract}

\thanks{MSC 2010 Classification: 62G10, 62H20, 60F05, 60D05. We thank Bodhi Sen for helpful discussions.}

\maketitle

\section{Introduction}

Given a sample of $(X_1^1, X_2^1), (X_1^2, X_2^2), \dots, (X_1^N, X_2^N) $ generated from a probability measure $\pi$ with marginals $\mu$ and $\nu$ on a product $\mathcal{X}\times \mathcal{Y}$ of topological spaces, a number of works have recently asked whether it is possible to define a simple empirical measure $T_N$, which provides an estimate for a non-parametric measure of association between $\mu$ and $\nu$. More concretely, \cite[Abstract]{chatterjee2020new} states the following desirable conditions:

\begin{quote}
``Is it possible to define a coefficient of correlation which is:
\begin{enumerate}[(i)]
\item simple as the classical coefficients like Pearson's correlation or Spearman's correlation, and yet
\item  Consistently estimates some simple and interpretable measure of the degree of dependence between the variables, which is 0 if and only if the variables are independent and 1 if and only if one is a measurable function of the other, and
\item  Has a simple asymptotic theory under the hypothesis of independence, like the classical coefficients?"
\end{enumerate}
\end{quote}

As is argued in \cite{chatterjee2020new}, none of the various past works based on joint cumulative distribution functions and ranks, kernel-based methods, information theoretic coefficients, coefficients based on copulas or on pairwise distances (see e.g. \cite{renyi1959measures,linfoot1957informational, blum1961distribution, rosenblatt1975quadratic,schweizer1981nonparametric, friedman1983graph, scarsini1984measures, szekely2007measuring,lyons2013distance,  gamboa2018sensitivity, zhang2019bet, puccetti2019measuring} and the references therein) satisfy all three properties stated above. It turns out that the articles \cite{trutschnig2011strong, junker2021estimating} and later also \cite{dette2013copula} and \cite{chatterjee2020new} are the first to answer Chatterjee's question above in the affirmative for spaces $\mathcal{X}=\R^{d_1}$ and $\mathcal{Y}=\R^{d_2}$, where $d_2=1$.  Since then their correlation coefficient has attracted a lot of attention, see e.g. \cite{shi2020power,cao2020correlations} and \cite{griessenberger2021multivariate} for a very recent extension. Complementary to this approach, \cite{deb2020kernel} (see also \cite{ke2019expected}) show how to build a corresponding estimator $T_N$ for general $d_2\ge 1$. The analysis in \cite{deb2020kernel} is restricted to estimators arising from reproducing kernel Hilbert spaces with specific requirements on the kernel and thus cannot be applied to arbitrary Polish spaces $\mathcal{X}$, $\mathcal{Y}$. In this article we offer an alternative construction of $T_N$ based on adapted Wasserstein distances. The idea of using tools from optimal transport to measure dependence of measures is not new and can be traced back at least to \cite{gini1915nuovi}, see also \cite{cifarelli2017centennial}. Recently, this subject has seen a spike in research activity: current works comprise \cite{ozair2019wasserstein,xiao2019disentangled, mori2020earth, mordant2021measuring, nies2021transport} amongst others. However, to the best of our knowledge this article is the first to define a coefficient of correlation based on adapted Wasserstein distances for general Polish spaces and to derive its analytical and statistical properties from adapted optimal transport. Indeed, directly utilising the underlying compatible metric structure of the space $\mathcal{X}$, the above properties (i)-(iii) hold without further assumptions. Furthermore, by varying the metric $d$ and the Wasserstein exponent $p$, one can naturally construct a whole family of different Wasserstein correlation coefficients, while directly exploiting the theory of optimal transportation. In fact, it will turn out that once we have defined the Wasserstein correlation $\overrightarrow{\mathcal{W}}$, our estimators can be computed via the plug-in approach $\overrightarrow{\mathcal{W}}_N=\overrightarrow{\mathcal{W}}(\picausal)$ for the so-called adapted empirical measure $\picausal$. In this article we derive consistency and convergence rates of the estimator $\overrightarrow{\mathcal{W}}(\picausal)$ under different assumptions.

\section{Notation and main results}\label{sec:1}

Let $\mathcal{X}$ be a Polish space with a compatible metric $d$ and let us denote by $\mathrm{Prob}(\mathcal{X})$ the set of Borel probability measures on $\mathcal{X}$. Let us take $\mu,\nu\in \text{Prob}(\mathcal{X})$ and denote by $\Pi(\mu,\nu)$ the set of couplings between $\mu$ and $\nu$ as , i.e. 
\begin{align*}
\Pi(\mu,\nu)= \left\{\pi \in \mathrm{Prob}(\mathcal{X}\times \mathcal{X}) :\ \pi(\cdot\times \mathcal{X})=\mu(\cdot), \ \pi(\mathcal{X}\times \cdot)=\nu(\cdot)\right\}.
\end{align*}
The Wasserstein distance $\mathcal{W}(\mu,\nu)$ is defined via
\begin{align*}
\mathcal{W}(\mu,\nu)=\inf_{\pi \in \Pi(\mu,\nu)} \int d(x_1,x_2)\,\pi(dx_1,dx_2).
\end{align*}
The pushforward of the measure $\mu$ via a function $f:\mathcal{X}\to \mathcal{X}$ is denoted by $f_\#\mu$, i.e. $$(f_\#\mu)(A):=\mu(\{x\in \mathcal{X}:\ f(x)\in A\})$$ for all Borel sets $A\subseteq \mathcal{X}$. Generalising the above definition to Borel probability measures on $\mathcal{X}^2:=\mathcal{X}\times\mathcal{X}$, we often write $\pi_1=(x_1)_\#\pi$ and $\pi_2=(x_2)_\#\pi$  for $\pi\in \text{Prob}(\mathcal{X}^2)$, where $(x_1,x_2)\mapsto x_1$ and $(x_1,x_2)\mapsto x_2$ are the canonical projection maps from $\mathcal{X}^2$ to the first and second coordinates respectively. We also recall that any coupling $\pi\in \Pi(\mu,\nu)$ has a $\mu$-a.s. unique disintegration with respect to the first coordinate, i.e. there exists a Borel measurable function $x_1\mapsto \pi_{x_1}$ such that $$\pi(A\times B)=\int_A\pi_{x_1}(B)\,\mu(dx_1) \quad \text{for all Borel sets }A,B\subseteq \mathcal{X}.$$ The product coupling with marginals $\mu$ and $\nu$ is denoted by $\mu\otimes \nu$.\\
One of the key notions used in this article is the so-called adapted Wasserstein distance. It can be introduced as follows:
for Borel probability measures $\pi,\tilde{\pi}$ on $\mathcal{X}^2$ we define the adapted (sometimes also called nested or bicausal) Wasserstein distance $\mathcal{AW}(\pi,\tilde{\pi})$ via
\begin{align}\label{eq:causal}
\mathcal{AW}(\pi,\tilde{\pi})=\inf_{\gamma\in \Pi(\pi_1, \tilde{\pi}_1)} \int \left[d(x_1, y_1)+\mathcal{W}(\pi_{x_1}, \tilde{\pi}_{y_1})\right]\,\gamma(dx_1,dy_1).
\end{align} 
On an intuitive level, the nested Wasserstein distance only considers those couplings $\gamma\in \Pi(\pi, \tilde{\pi})$, which respect the information flow formalised by the canonical (i.e. coordinate) filtration $(\mathcal{F}_t)_{t\in \{1,2\}}$: in \eqref{eq:causal} this is achieved by first taking an infimum over  couplings of $\pi_1, \tilde{\pi}_1$ (i.e. ``couplings at time one") and then a second (nested) infimum with respect to the respective disintegrations (i.e. ``conditional couplings at time two"). This feature distinguishes $\mathcal{AW}$ from the Wasserstein distance $\mathcal{W}$, which also includes ``anticipative couplings". We refer to \cite[pp. 2-3]{BackhoffVeraguas:2019tnb} for a well-written introduction to this topic. The nested distance was introduced in \cite{Pflug:2009hl}, \cite{Pflug:2012bfa} in the context of multistage stochastic optimisation and was independently analysed in \cite{Lassalle:2018hfa}.\
Let us also remark here that we always have the inequality
\begin{align}\label{eq:AW_W}
\mathcal{W}(\pi,\tilde{\pi})\le \mathcal{AW}(\pi,\tilde{\pi}),
\end{align}
where the Wasserstein distance $\mathcal{W}(\pi,\tilde{\pi})$ is correspondingly defined as
\begin{align*}
\mathcal{W}(\pi,\tilde{\pi}) =\inf_{\gamma\in \Pi(\pi,\tilde{\pi})} \int \left[d(x_1,y_1)+d(x_2,y_2)\right]\,\gamma(d(x_1, x_2), d(y_1,y_2))
\end{align*}
and 
\begin{align*}
\Pi(\pi,\tilde{\pi})=\left\{\gamma\in \Pi(\pi,\tilde{\pi}):\ \gamma(\cdot \times \mathcal{X}^2)=\pi(\cdot), \ \gamma (\mathcal{X}^2\times \cdot)=\tilde{\pi}(\cdot)\right\}.
\end{align*}

For the rest of this article we fix two measures $\mu,\nu\in \text{Prob}(\mathcal{X})$. We now introduce the following measure of association, which will be the main concept discussed in this article:

\begin{definition}
For any $\pi\in \text{Prob}(\mathcal{X}^2)$ we define the \emph{Wasserstein correlation coefficient} $\pi\mapsto \overrightarrow{\mathcal{W}}(\pi)$ by
\begin{align*}
\overrightarrow{\mathcal{W}}(\pi):=\frac{\int \mathcal{W}(\pi_{x_1},\pi_2)\,\pi_1(dx_1)}{\int  d(y,z)\,\pi_2(dy)\,\pi_2(dz)}.
\end{align*}
If $\pi\in \Pi(\mu,\nu)$, then  in particular
\begin{align*}
\overrightarrow{\mathcal{W}}(\pi)=\frac{\int \mathcal{W}(\pi_{x_1},\nu)\,\mu(dx_1)}{\int  d(y,z)\,\nu(dy)\,\nu(dz)},
\end{align*}
where throughout we assume that $\nu$ is not a singleton, i.e. $$\int  d(y,z)\,\nu(dy)\nu(dz)\neq 0.$$ 
\end{definition}

The key idea for the definition of $\overrightarrow{\mathcal{W}}(\pi)$ is the following insight: in order to capture association between $\mu$ and $\nu$ for a coupling $\pi\in \Pi(\mu,\nu)$, it is sufficient to compare the disintegration $\pi_{x_1}$ with $\nu$ via the term $\mathcal{W}(\pi_{x_1},\nu)$. Indeed, this term is zero for $\mu$-a.e. $x_1$ if and only if $\pi_{x_1}=\nu$ or equivalently $\pi=\mu\otimes\nu$. On the other hand, if $\mu$ and $\nu$ are completely dependent then $\pi_{x_1}=\delta_{f(x_1)}$ for some function $f$. In this case there is only one coupling between $\pi_{x_1}$ and $\nu$ and the value of the denominator and the numerator in the above definition align, yielding $\overrightarrow{\mathcal{W}}(\pi)=1$.\\
More generally, defining $\pi^\lambda=\lambda\pi^1+(1-\lambda)\pi^2$ for some measures $\pi^1, \pi^2\in \Pi(\mu,\nu)$ and $\lambda\in [0,1]$, we conclude by convexity of Wasserstein distances  that $\overrightarrow{\mathcal{W}}(\pi^\lambda)\le \lambda \overrightarrow{\mathcal{W}}(\pi^1)+(1-\lambda)\overrightarrow{\mathcal{W}}(\pi^2)$, i.e. $\pi\mapsto \overrightarrow{\mathcal{W}}(\pi)$ is convex on $\Pi(\mu,\nu)$. We discuss further properties of the Wasserstein correlation in the upcoming sections. In particular we show that $\overrightarrow{\mathcal{W}}$ indeed satisfies the main requirement (ii) stated in \cite[Abstract]{chatterjee2020new}, as cited in the introduction:

\begin{theorem}\label{thm:easy}
For any $\pi\in \Pi(\mu,\nu)$ the functional $\pi\mapsto \overrightarrow{\mathcal{W}}(\pi)$ satisfies:
\begin{enumerate}[(i)]
\item $\overrightarrow{\mathcal{W}}(\pi)\in [0,1]$.
\item $\overrightarrow{\mathcal{W}}(\pi)=0$ if and only if $\pi=\mu\otimes\nu$.
\item  $\overrightarrow{\mathcal{W}}(\pi)=1$ if and only if $\nu=f_\#\mu$ for some measurable function $f:\mathcal{X}\to \mathcal{X}$.
\end{enumerate}
\end{theorem}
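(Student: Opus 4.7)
Parts (i) and (ii) reduce to direct manipulations. For (i), non-negativity is immediate; for the upper bound, the product coupling is admissible in $\Pi(\pi_{x_1},\nu)$, so $\mathcal{W}(\pi_{x_1},\nu)\le \int\int d(y,z)\,\pi_{x_1}(dy)\,\nu(dz)$, and integrating against $\mu$ together with $\nu=\int \pi_{x_1}\,\mu(dx_1)$ and Fubini gives $\int \mathcal{W}(\pi_{x_1},\nu)\,\mu(dx_1)\le \int d(y,z)\,\nu(dy)\,\nu(dz)$, i.e.\ $\overrightarrow{\mathcal{W}}(\pi)\le 1$. For (ii), the integrand is non-negative, so the numerator vanishes iff $\pi_{x_1}=\nu$ for $\mu$-a.e.\ $x_1$, iff $\pi=\mu\otimes\nu$.

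The substantive content is (iii). The reverse direction is a direct computation: when $\nu=f_\#\mu$, the graph coupling $\pi=(\mathrm{id},f)_\#\mu\in\Pi(\mu,\nu)$ has $\pi_{x_1}=\delta_{f(x_1)}$, and the change of variables $f_\#\mu=\nu$ gives $\int \mathcal{W}(\delta_{f(x_1)},\nu)\,\mu(dx_1)=\int\int d(f(x_1),z)\,\nu(dz)\,\mu(dx_1)=\int d(y,z)\,\nu(dy)\,\nu(dz)$. For the forward direction, assume $\overrightarrow{\mathcal{W}}(\pi)=1$. Then equality throughout the bound in (i) forces
\[\mathcal{W}(\pi_{x_1},\nu)=\int\int d(y,z)\,\pi_{x_1}(dy)\,\nu(dz)\qquad\text{for $\mu$-a.e.\ }x_1,\]
so the product coupling $\pi_{x_1}\otimes\nu$ is itself $\mathcal{W}$-optimal, and its support $\mathrm{supp}(\pi_{x_1})\times \mathrm{supp}(\nu)$ is $d$-cyclically monotone. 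Two-cyclical monotonicity applied with both orderings of $(z_1,z_2)$ collapses to the 4-point equality $d(y_1,z_1)+d(y_2,z_2)=d(y_1,z_2)+d(y_2,z_1)$ for all $y_1,y_2\in\mathrm{supp}(\pi_{x_1})$ and $z_1,z_2\in \mathrm{supp}(\nu)$. Now $\nu=\int \pi_{y_1}\,\mu(dy_1)$ gives $\pi_{x_1}(\mathrm{supp}(\nu)^c)=0$ for $\mu$-a.e.\ $x_1$, so $\mathrm{supp}(\pi_{x_1})\subseteq \mathrm{supp}(\nu)$; choosing $z_1=y_1$ and $z_2=y_2$ in the 4-point identity then reduces it to $0=2\,d(y_1,y_2)$, whence $\mathrm{supp}(\pi_{x_1})$ is a singleton $\{f(x_1)\}$. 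Measurability of $f$ follows from Borel measurability of the disintegration $x_1\mapsto\pi_{x_1}$, and then $\nu=\int\delta_{f(x_1)}\,\mu(dx_1)=f_\#\mu$.

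The delicate step is the forward direction of (iii): the equality $\mathcal{W}(\rho,\nu)=\int d(y,z)\,\rho(dy)\,\nu(dz)$ alone does \emph{not} force $\rho$ to be a Dirac mass (``well-separated'' one-dimensional supports where the monotone and product couplings coincide are counterexamples). What rescues the argument is the inclusion $\mathrm{supp}(\pi_{x_1})\subseteq\mathrm{supp}(\nu)$, arising from the marginal constraint $\pi_2=\nu$, which lets us specialise the 4-point equality along the diagonal and collapse $\mathrm{supp}(\pi_{x_1})$ to a point.
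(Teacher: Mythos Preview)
Your proof is correct and follows essentially the same approach as the paper's: parts (i) and (ii) are identical, and for (iii) both you and the paper argue that equality in the upper bound forces the product coupling $\pi_{x_1}\otimes\nu$ to be optimal, hence its support to be $d$-cyclically monotone, and then use $\mathrm{supp}(\pi_{x_1})\subseteq\mathrm{supp}(\nu)$ together with the two-cycle condition on pairs $(y_1,y_2),(y_2,y_1)$ to conclude $d(y_1,y_2)=0$. Your presentation is slightly more explicit---you isolate the four-point equality and justify the support inclusion cleanly via $\nu(\mathrm{supp}(\nu)^c)=\int\pi_{x_1}(\mathrm{supp}(\nu)^c)\,\mu(dx_1)=0$, whereas the paper invokes ``tightness'' somewhat loosely---but the logical skeleton is the same.
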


A natural estimator for $\overrightarrow{\mathcal{W}}$ is given via the following plugin approach:
\begin{theorem}
Let $\pi \in \Pi(\mu,\nu)$ such that 
\begin{align*}
\int d(x_2,x_0)\,\nu(dx_2) <\infty
\end{align*}
for any $x_0\in \mathcal{X}$ and let $\hat{\pi}^N$ be an $\mathcal{AW}$-consistent estimator of $\pi$. Then $\overrightarrow{\mathcal{W}}(\hat{\pi}^N)$ is a consistent estimator of $\overrightarrow{\mathcal{W}}(\pi)$.
\end{theorem}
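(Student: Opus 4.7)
The plan is to show that both the numerator and the denominator of $\overrightarrow{\mathcal{W}}(\hat{\pi}^N)$ converge to those of $\overrightarrow{\mathcal{W}}(\pi)$, and then conclude since the denominator of $\overrightarrow{\mathcal{W}}(\pi)$ is strictly positive by assumption. The crucial observation is that $\mathcal{AW}$-consistency encodes exactly the joint control over marginals and disintegrations that we need.

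First, I would handle the denominator. By \eqref{eq:AW_W} and the fact that projecting a coupling onto one pair of coordinates yields a coupling of the second marginals, $\mathcal{AW}(\pi, \hat{\pi}^N) \to 0$ implies $\mathcal{W}(\pi_2, \hat{\pi}^N_2) \to 0$. Using the Lipschitz estimate $|d(y,z) - d(y',z')| \le d(y,y') + d(z,z')$ and an optimal coupling of $\pi_2$ with $\hat{\pi}^N_2$ (applied twice, coordinatewise), I obtain
\begin{align*}
\left|\int d(y,z)\,\pi_2(dy)\pi_2(dz) - \int d(y,z)\,\hat{\pi}^N_2(dy)\hat{\pi}^N_2(dz)\right| \le 2\,\mathcal{W}(\pi_2,\hat{\pi}^N_2) \to 0,
\end{align*}
where finiteness of these integrals is guaranteed by the first-moment assumption on $\nu$ together with $\mathcal{W}$-convergence.

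Next, for the numerator, I would fix (for each $N$) an $\varepsilon_N$-optimal coupling $\gamma^N \in \Pi(\pi_1, \hat{\pi}^N_1)$ for $\mathcal{AW}(\pi, \hat{\pi}^N)$ with $\varepsilon_N \to 0$. Rewriting each marginal integral as an integral against $\gamma^N$,
\begin{align*}
\int \mathcal{W}(\pi_{x_1}, \pi_2)\,\pi_1(dx_1) - \int \mathcal{W}(\hat{\pi}^N_{y_1}, \hat{\pi}^N_2)\,\hat{\pi}^N_1(dy_1) = \int \bigl[\mathcal{W}(\pi_{x_1}, \pi_2) - \mathcal{W}(\hat{\pi}^N_{y_1}, \hat{\pi}^N_2)\bigr]\,\gamma^N(dx_1,dy_1),
\end{align*}
and invoking the triangle inequality $|\mathcal{W}(\pi_{x_1},\pi_2) - \mathcal{W}(\hat{\pi}^N_{y_1}, \hat{\pi}^N_2)| \le \mathcal{W}(\pi_{x_1}, \hat{\pi}^N_{y_1}) + \mathcal{W}(\pi_2, \hat{\pi}^N_2)$, I can bound the absolute value of the difference by
\begin{align*}
\int \mathcal{W}(\pi_{x_1}, \hat{\pi}^N_{y_1})\,\gamma^N(dx_1,dy_1) + \mathcal{W}(\pi_2, \hat{\pi}^N_2) \le \mathcal{AW}(\pi,\hat{\pi}^N) + \mathcal{W}(\pi_2,\hat{\pi}^N_2) + \varepsilon_N \to 0,
\end{align*}
where the first inequality uses the definition \eqref{eq:causal} of $\mathcal{AW}$ together with the choice of $\gamma^N$.

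Combining the two convergences with the fact that $\int d(y,z)\,\nu(dy)\nu(dz) > 0$ yields $\overrightarrow{\mathcal{W}}(\hat{\pi}^N) \to \overrightarrow{\mathcal{W}}(\pi)$ (in particular the denominators of $\overrightarrow{\mathcal{W}}(\hat{\pi}^N)$ are eventually positive, so the ratios are well-defined for large $N$). The main conceptual obstacle would be that the disintegration map $x_1 \mapsto \pi_{x_1}$ is not continuous in the weak or $\mathcal{W}$-topology on joint laws; this is precisely why the result relies on $\mathcal{AW}$-consistency rather than just $\mathcal{W}$-consistency — the definition of $\mathcal{AW}$ builds the conditional Wasserstein distance into the cost, which is exactly what is needed to control the numerator via a single coupling $\gamma^N$.
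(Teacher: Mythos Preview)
Your proof is correct and follows essentially the same route as the paper. The paper packages the two estimates you derive---the bound on the numerator via a near-optimal $\mathcal{AW}$-coupling and the triangle inequality for $\mathcal{W}$, and the bound $g(\nu,\tilde\nu)\le 2\,\mathcal{W}(\nu,\tilde\nu)$ on the denominator---into a separate Lipschitz-type result (Theorem~\ref{thm:triangle}), yielding $\bigl|\overrightarrow{\mathcal{W}}(\pi)-\overrightarrow{\mathcal{W}}(\hat\pi^N)\bigr|\le \tfrac{4}{f(\hat\pi^N_2)}\,\mathcal{AW}(\pi,\hat\pi^N)$, and then applies it; you instead prove these ingredients inline, but the underlying argument is identical.
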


One such $\mathcal{AW}$-consistent estimator of $\pi$ has recently been constructed in \cite{backhoff2020estimating} and throughout this article, we will make use of results established there. In particular continuity of $\overrightarrow{\mathcal{W}}$ in $\mathcal{AW}$ will directly enable us to establish convergence rates for $\overrightarrow{\mathcal{W}}(\picausal)$.\\

Let us also also remark that our analysis can easily be extended to consider $p$-Wasserstein distances $\mathcal{W}_p$ for $p>1$ by correspondingly considering the $p$-Wasserstein correlation coefficient
$$ \overrightarrow{\mathcal{W}}_p(\pi):=\frac{\left(\int \mathcal{W}_p(\pi_{x_1},\nu)^p\,\pi_1(dx_1)\right)^{1/p}}{\left(\int  d(x_2,y)^p\,\pi_2(dx_2)\,\pi_2(dy)\right)^{1/p}}$$
and replacing $\mathcal{W}, \mathcal{AW}$ by the (adapted) $p$-Wasserstein distances $\mathcal{W}_p, \mathcal{AW}_p$ in all results. The restriction to $p=1$ is thus only chosen for notational simplicity.\\

This article is structured as follows: in Section \ref{sec:2} we derive basic properties of $\overrightarrow{\mathcal{W}}$ and compare it to other measures of association derived in \cite{chatterjee2020new, deb2020kernel} as well as Pearson's correlation coefficient in the case of a bivariate Gaussian distribution $\pi$. In Section \ref{sec:3} we state general continuity properties of the functional $\pi\mapsto \overrightarrow{\mathcal{W}}(\pi)$ with respect to $\mathcal{AW}$ and give a first consistency result. Section \ref{sec:4} and \ref{sec:5} then exhibit convergence rates for the independent case $\pi=\mu\otimes \nu$ and the general case respectively. Lastly Section \ref{sec:7} exhibits numerical simulations, while Section \ref{sec:8} discusses open research questions. We relegate longer proofs to the appendix.

\section{Discussion and literature review}\label{sec:2}

Recall that the Wasserstein correlation coefficient is given by
\begin{align*}
\overrightarrow{\mathcal{W}}(\pi)=\frac{\int \mathcal{W}(\pi_{x_1},\nu)\,\mu(dx_1)}{\int  d(y,z)\,\nu(dy)\,\nu(dz)}.
\end{align*}
We start with some preliminary comments: we first note that $\overrightarrow{\mathcal{W}}$ is not symmetric in the order of marginals, i.e. $ \overleftarrow{\mathcal{W}}(\pi)\neq \overrightarrow{\mathcal{W}}(\pi)$ in general, where
\begin{align*}
\overleftarrow{\mathcal{W}}(\pi):=\frac{\int \mathcal{W}(\pi_{y_1},\mu)\,\nu(dy_1)}{\int  d(x,z)\,\mu(dx)\,\mu(dz)}.
\end{align*}
This is showcased in the following example:

\begin{example}
Take $(\mathcal{X}, d)=(\R, |\cdot|_2)$ and set $$\pi=\frac{\delta_{(1,0)}+\delta_{(1,1)}+\delta_{(2,2)}}{3}.$$ Then 
\begin{align*}
\mu=\frac{2\delta_1+\delta_2}{3}, \qquad \nu=\frac{\delta_0+\delta_1+\delta_2}{3}
\end{align*} 
as well as 
\begin{align*}
\pi_{1}=\frac{\delta_0+\delta_1}{2}, \qquad \pi_{2} =\delta_2,
\end{align*}
so that $\mathcal{W}(\pi_{1},\nu)=1/2$, $\mathcal{W}(\pi_{2},\nu)=1$ and
\begin{align*}
\overrightarrow{\mathcal{W}}(\pi)=\frac{2/3}{8/9}=\frac{3}{4},
\end{align*}
while $\overleftarrow{\mathcal{W}}(\pi)$ equals $1$ by an obvious modification of Theorem \ref{thm:easy}.(iii) for $\overleftarrow{\mathcal{W}}$. In conclusion $\overleftarrow{\mathcal{W}}(\pi)\neq \overrightarrow{\mathcal{W}}(\pi)$.
\end{example}

The lack of symmetry is intentional: as we have already stated in Theorem \ref{thm:easy} we have $\overrightarrow{\mathcal{W}}=1$ iff $\nu=f_\#\mu$ for some measurable function $f$. The above example shows that this does not imply $\mu=g_\# \nu$ for some measurable function $g$. Nevertheless, in order to obtain a symmetric expression we could simply consider $\overleftarrow{\mathcal{W}}(\pi)\vee \overrightarrow{\mathcal{W}}(\pi)$ instead of merely $\overrightarrow{\mathcal{W}}(\pi)$. For notational simplicity we will only state our estimates for $\overrightarrow{\mathcal{W}}(\pi)$ and remark instead that all of them also hold for $\overleftarrow{\mathcal{W}}(\pi)$ as well as $\overleftarrow{\mathcal{W}}(\pi)\vee \overrightarrow{\mathcal{W}}(\pi)$, adjusting constants correspondingly. \\

Apart from the properties \textit{(i)-(iii)} stated in the introduction, \cite[Property (F)]{mori2019four} also asks for invariance properties of measures of association. In our case, the following can be directly derived from the definition of Wasserstein distances:

\begin{lemma}
Let $I:(\mathcal{X},d)\to (\mathcal{X},d)$ be a isometric isomorphism. Then $$\overrightarrow{\mathcal{W}}(\pi)=\overrightarrow{\mathcal{W}}((I,I)_\#\pi).$$ 
\end{lemma}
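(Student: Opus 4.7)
The plan is to unpack the definition of $\overrightarrow{\mathcal{W}}((I,I)_\#\pi)$ and use the fact that the Wasserstein distance is invariant under pushforward by an isometry. Set $\tilde{\pi}=(I,I)_\#\pi$ throughout.

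First I would identify the marginals and disintegration of $\tilde\pi$. Since $I$ is a measurable bijection and $(I,I)$ projects to $I$ in each coordinate, one immediately gets $\tilde\pi_1=I_\#\mu$ and $\tilde\pi_2=I_\#\nu$. For the disintegration, a short change-of-variables calculation shows that for all Borel $A,B\subseteq\mathcal{X}$,
\begin{align*}
\tilde\pi(A\times B)
=\pi(I^{-1}(A)\times I^{-1}(B))
=\int_{A}(I_\#\pi_{I^{-1}(y_1)})(B)\,(I_\#\mu)(dy_1),
\end{align*}
so by $\tilde\pi_1$-a.s. uniqueness of disintegrations, $\tilde\pi_{y_1}=I_\#\pi_{I^{-1}(y_1)}$.

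Next I would record the isometry invariance of $\mathcal{W}$: for any $\rho_1,\rho_2\in\mathrm{Prob}(\mathcal{X})$, the map $\gamma\mapsto(I,I)_\#\gamma$ is a bijection from $\Pi(\rho_1,\rho_2)$ to $\Pi(I_\#\rho_1,I_\#\rho_2)$, and the cost is preserved because $d(I(x),I(y))=d(x,y)$. Hence $\mathcal{W}(I_\#\rho_1,I_\#\rho_2)=\mathcal{W}(\rho_1,\rho_2)$.

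Combining these facts handles numerator and denominator separately. For the numerator, using the disintegration formula and then the change of variables $y_1=I(x_1)$,
\begin{align*}
\int \mathcal{W}(\tilde\pi_{y_1},\tilde\pi_2)\,\tilde\pi_1(dy_1)
&=\int \mathcal{W}(I_\#\pi_{I^{-1}(y_1)},I_\#\nu)\,(I_\#\mu)(dy_1)\\
&=\int \mathcal{W}(\pi_{x_1},\nu)\,\mu(dx_1).
\end{align*}
For the denominator, the same isometry property gives
\begin{align*}
\int d(y,z)\,\tilde\pi_2(dy)\tilde\pi_2(dz)
=\int d(I(y'),I(z'))\,\nu(dy')\nu(dz')
=\int d(y',z')\,\nu(dy')\nu(dz').
\end{align*}
Taking the quotient yields $\overrightarrow{\mathcal{W}}(\tilde\pi)=\overrightarrow{\mathcal{W}}(\pi)$.

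I do not expect any real obstacle: the only nontrivial bookkeeping is the identification of the disintegration $\tilde\pi_{y_1}=I_\#\pi_{I^{-1}(y_1)}$, and this follows directly from the pushforward formula together with the uniqueness of disintegrations. Everything else is standard manipulation of pushforwards.
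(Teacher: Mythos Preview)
Your proof is correct and is exactly the direct verification the paper has in mind; the paper itself does not spell out a proof beyond noting that the lemma ``can be directly derived from the definition of Wasserstein distances,'' and your argument is precisely that derivation made explicit.
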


We now compare the Wasserstein correlation $\overrightarrow{\mathcal{W}}$ to several other measures of association. We start with the one proposed in \cite{trutschnig2011strong, dette2013copula, chatterjee2020new, junker2021estimating}:

\begin{lemma}\label{lem:chatterjee}
\cite{trutschnig2011strong, dette2013copula, chatterjee2020new, junker2021estimating}'s coefficient of correlation can be rewritten as
\begin{align*}
T^C(\pi)=\frac{ \int \int_{[0,1]} \left( F_{\mu_{x_1}}(F^{-1}_\nu(y))-y \right)^2\,dy\,\mu(dx_1)}{\int \mathrm{Var}\left( \mathds{1}_{\{Y\ge y\}}\right)\,\nu(dy)}
\end{align*}
where $F_{\mu_{x_1}}$ and $F^{-1}_\nu$ denotes the cdf of $\mu_{x_1}$ and $\nu$ respectively. In particular
\begin{align*}
\frac{\left(\int \mathcal{W}(\tilde{\pi}_{x_1}, \mathcal{U}([0,1]))\,\mu(dx_1)\right)^2}{\int \mathrm{Var}\left( \mathds{1}_{\{Y\ge y\}}\right)\,\nu(dy)} \le T^C(\pi)\le 2\frac{\int \mathcal{W}(\tilde{\pi}_{x_1}, \mathcal{U}([0,1]))\,\mu(dx_1)}{\int \mathrm{Var}\left( \mathds{1}_{\{Y\ge y\}}\right)\,\nu(dy)}
\end{align*}
for $\tilde{\pi}_{x_1}:=(F_\nu)_{\#}\pi_{x_1}$ and $\mathcal{U}([0,1])$ is the uniform distribution on $[0,1]$.
\end{lemma}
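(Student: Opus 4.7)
The plan is to start from Chatterjee's (equivalent) population formula
\begin{align*}
T^C(\pi) = \frac{\int \mathrm{Var}\!\left(\E[\mathds{1}_{\{Y\ge y\}}\mid X]\right)\,\nu(dy)}{\int \mathrm{Var}(\mathds{1}_{\{Y\ge y\}})\,\nu(dy)}
\end{align*}
for $(X,Y)\sim\pi$, rewrite the numerator in cdf form, and then sandwich it by the $L^1$ and $L^2$ norms of the function $h_{x_1}(u):=F_{\pi_{x_1}}(F_\nu^{-1}(u))-u$ on $[0,1]$ (I read the ``$\mu_{x_1}$'' in the statement as the disintegration $\pi_{x_1}$).

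First I would observe that $\E[\mathds{1}_{\{Y\ge y\}}\mid X=x_1]=1-F_{\pi_{x_1}}(y^-)$ and $\P(Y\ge y)=1-F_\nu(y^-)$, so that
\begin{align*}
\mathrm{Var}(\E[\mathds{1}_{\{Y\ge y\}}\mid X]) = \int \bigl(F_{\pi_{x_1}}(y^-) - F_\nu(y^-)\bigr)^2 \mu(dx_1).
\end{align*}
Applying Fubini and the standard quantile-transform identity $\int g(F_\nu(y))\,\nu(dy)=\int_0^1 g(u)\,du$ (which holds also in the presence of atoms because the exceptional set $\{F_\nu(F_\nu^{-1}(u))\neq u\}$ has Lebesgue measure zero) gives
\begin{align*}
\int \mathrm{Var}(\E[\mathds{1}_{\{Y\ge y\}}\mid X])\,\nu(dy) = \int\!\!\int_0^1 \bigl(F_{\pi_{x_1}}(F_\nu^{-1}(u))-u\bigr)^2\,du\,\mu(dx_1),
\end{align*}
which is the claimed representation of $T^C(\pi)$. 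The same substitution yields the equivalent form of the denominator.

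Next I would invoke the one-dimensional identity $\mathcal{W}(P,Q)=\int_\R |F_P-F_Q|$ and the relation $F_{\tilde\pi_{x_1}}(u)=F_{\pi_{x_1}}(F_\nu^{-1}(u))$ (which follows directly from $\tilde\pi_{x_1}=(F_\nu)_\#\pi_{x_1}$ together with the definition of the generalised inverse) to identify
\begin{align*}
\mathcal{W}(\tilde\pi_{x_1},\mathcal{U}([0,1])) = \int_0^1 |h_{x_1}(u)|\,du = \|h_{x_1}\|_{L^1([0,1])}.
\end{align*}
Thus the numerator of $T^C$ is $\int \|h_{x_1}\|_{L^2([0,1])}^2\,\mu(dx_1)$ and the claim reduces to comparing $L^2$-norms squared with $L^1$-norms. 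For the upper bound I would use $|h_{x_1}|\le 1$ pointwise (both $F_{\pi_{x_1}}(F_\nu^{-1}(u))$ and $u$ lie in $[0,1]$), so that $h_{x_1}^2\le |h_{x_1}|$, whence $\|h_{x_1}\|_{L^2}^2\le \|h_{x_1}\|_{L^1}\le 2\|h_{x_1}\|_{L^1}$; integrating in $\mu$ gives the upper inequality. For the lower bound I would chain two Cauchy-Schwarz estimates: on $[0,1]$, $\|h_{x_1}\|_{L^1}^2\le \|h_{x_1}\|_{L^2}^2$, and then in $\mu$, $\bigl(\int \|h_{x_1}\|_{L^1}\,\mu(dx_1)\bigr)^2\le \int \|h_{x_1}\|_{L^1}^2\,\mu(dx_1)$, which combine to
\begin{align*}
\Bigl(\int \mathcal{W}(\tilde\pi_{x_1},\mathcal{U}([0,1]))\,\mu(dx_1)\Bigr)^2 \le \int \|h_{x_1}\|_{L^2}^2\,\mu(dx_1).
\end{align*}
Dividing by the denominator yields the lower inequality.

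The main obstacle, though relatively mild, is the careful handling of atoms of $\nu$ in the quantile-transform step and the identification $F_{\tilde\pi_{x_1}}(u)=F_{\pi_{x_1}}(F_\nu^{-1}(u))$; once these are dispatched, everything else is a boundedness plus Jensen/Cauchy--Schwarz argument and the constants are transparent (the factor $2$ in the upper bound is not sharp and can be replaced by $1$).
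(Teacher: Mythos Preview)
Your proposal is correct and follows essentially the same route as the paper: start from Chatterjee's variance formula, apply the quantile substitution $y\mapsto F_\nu^{-1}(u)$ to obtain the $L^2$ representation, identify $\mathcal{W}(\tilde\pi_{x_1},\mathcal{U}([0,1]))=\int_0^1|F_{\pi_{x_1}}(F_\nu^{-1}(u))-u|\,du$, and then sandwich via $|h_{x_1}|\le 1$ for the upper bound and Cauchy--Schwarz for the lower bound. The only cosmetic differences are that the paper applies a single Cauchy--Schwarz on the product measure $\mu\otimes\mathrm{Leb}_{[0,1]}$ rather than your two-step version, and that it does not discuss the atom issue you flag; your observation that the constant $2$ can be replaced by $1$ is also correct.
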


In particular $T^C$ can be estimated by the Wasserstein distance between $\tilde{\pi}_{x_1}$ and $\mathcal{U}([0,1])$. We note that compared to $\pi_{x_1}$ in the definition of $\overrightarrow{\mathcal{W}}$, $\tilde{\pi}_{x_1}$ is always compactly supported on $[0,1]$.\\

Next we compare $\overrightarrow{\mathcal{W}}$ to the functional obtained in \cite{deb2020kernel} for the specific case $(\mathcal{X},d)=(\R^d, |\cdot|_2)$:

\begin{lemma}\label{rem:1}
The coefficient of correlation obtained in  \cite{deb2020kernel} is given by
\begin{align*}
T^{\mathrm{DGS}}(\pi)&=1-\frac{\int |x_2-y|_2 \,\pi_{x_1}(dx_2)\,\pi_{x_1}(dy)\mu(dx_1)}{\int |y-z|_2 \,\nu(dy)\,\nu(dz)}\\
&= \frac{ \int |y-z|_2 \,\nu(dy)\,\nu(dz) - \int |x_2-y|_2 \,\pi_{x_1}(dx_2)\,\pi_{x_1}(dy)\,\mu(dx_1)}{\int |y-z|_2 \,\nu(dy)\,\nu(dz)}
\end{align*}
and $T^{\mathrm{DGS}}(\pi)\le 2\overrightarrow{\mathcal{W}}(\pi).$
\end{lemma}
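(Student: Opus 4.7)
The first displayed equality in the lemma is taken as the definition of $T^{\mathrm{DGS}}$ (or identified with the kernel‑based formulation of \cite{deb2020kernel} via the energy‑distance/distance‑covariance identification using the Euclidean kernel $k(x,y)=-|x-y|_2$). The second equality is only a rewriting of $1-\tfrac{A}{B}$ as $\tfrac{B-A}{B}$, so it requires no work. Hence the whole substance of the lemma lies in the bound $T^{\mathrm{DGS}}(\pi)\le 2\overrightarrow{\mathcal{W}}(\pi)$.

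Since the two coefficients share the \emph{same} denominator $\int |y-z|_2\,\nu(dy)\,\nu(dz)$, after clearing denominators the inequality is equivalent to
\begin{align*}
\int |y-z|_2\,\nu(dy)\nu(dz)-\int\!\!\int |x_2-y|_2\,\pi_{x_1}(dx_2)\pi_{x_1}(dy)\,\mu(dx_1)\le 2\int \mathcal{W}(\pi_{x_1},\nu)\,\mu(dx_1).
\end{align*}
Because the first term does not depend on $x_1$, it suffices to prove the pointwise-in-$x_1$ statement
\begin{align*}
\int |y-z|_2\,\nu(dy)\nu(dz)-\int\!\!\int |x_2-y|_2\,\pi_{x_1}(dx_2)\pi_{x_1}(dy)\le 2\,\mathcal{W}(\pi_{x_1},\nu)
\end{align*}
and then integrate against $\mu$.

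The key step is a product‑coupling trick. Let $\gamma\in\Pi(\pi_{x_1},\nu)$ be arbitrary; then $\gamma\otimes\gamma$ is a coupling of $\pi_{x_1}\otimes\pi_{x_1}$ and $\nu\otimes\nu$. Writing the difference of the two double integrals as a single integral against $\gamma\otimes\gamma$ and applying the reverse triangle inequality $\bigl||x_2-x_2'|_2-|y-y'|_2\bigr|\le |x_2-y|_2+|x_2'-y'|_2$ gives
\begin{align*}
\Bigl|\!\int |y-y'|_2\,\nu(dy)\nu(dy')-\!\int |x_2-x_2'|_2\,\pi_{x_1}(dx_2)\pi_{x_1}(dx_2')\Bigr|\le 2\int |x_2-y|_2\,\gamma(dx_2,dy).
\end{align*}
Taking the infimum over $\gamma\in\Pi(\pi_{x_1},\nu)$ yields exactly the bound $2\,\mathcal{W}(\pi_{x_1},\nu)$ on the right‑hand side, which is what we need. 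Integrating against $\mu(dx_1)$ and dividing by $\int |y-z|_2\,\nu(dy)\nu(dz)$ finishes the argument.

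The main (and really only) conceptual step is realising that one must lift an arbitrary coupling $\gamma\in\Pi(\pi_{x_1},\nu)$ to the product coupling $\gamma\otimes\gamma$ of the two ``squared'' measures so that the reverse triangle inequality can be applied term by term; everything else is routine manipulation. Integrability conditions are harmless under the assumption of Theorem 1.2 ($\nu$ has a finite first moment), since $\int |x_2|_2\,\pi_{x_1}(dx_2)$ is $\mu$-integrable by Fubini and $|y-z|_2\le|y|_2+|z|_2$.
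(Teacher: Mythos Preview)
Your proof is correct and follows essentially the same route as the paper: both arguments use the product coupling $\gamma\otimes\gamma$ of an (optimal) $\gamma\in\Pi(\pi_{x_1},\nu)$ together with the reverse triangle inequality $\bigl||y-z|_2-|\tilde y-\tilde z|_2\bigr|\le |y-\tilde y|_2+|z-\tilde z|_2$ to bound the difference of the two double integrals by $2\mathcal{W}(\pi_{x_1},\nu)$. The only cosmetic differences are that the paper selects an optimal coupling from the start rather than taking an infimum at the end, and carries the $\mu$-integral throughout instead of first establishing the pointwise-in-$x_1$ estimate.
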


By a similar reasoning, we can derive the following corollary:
\begin{corollary}
Let $(\mathcal{X},\|\cdot\|)$ be a normed space and let us define the measure of association derived from the norm $\|\cdot\|$ by 
\begin{align*}
T^{\|\cdot\|}(\pi)= 1-\frac{\int \|y-z\| \,\pi_{x_1}(dy)\,\pi_{x_1}(dz)\,\mu(dx_1)}{\int \|y-z\| \,\nu(dy)\,\nu(dz)}.
\end{align*}
Then we have $T^{\|\cdot\|}(\pi)\le 2 \overrightarrow{\mathcal{W}}(\pi).$ 
\end{corollary}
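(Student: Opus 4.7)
The plan is to mimic the proof of Lemma \ref{rem:1} with $|\cdot|_2$ replaced by $\|\cdot\|$, the only structure required being the triangle inequality. First I would rewrite
\begin{align*}
T^{\|\cdot\|}(\pi)=\frac{\int \|y-z\|\,\nu(dy)\,\nu(dz)-\int \|y-z\|\,\pi_{x_1}(dy)\,\pi_{x_1}(dz)\,\mu(dx_1)}{\int \|y-z\|\,\nu(dy)\,\nu(dz)},
\end{align*}
so that it suffices to bound the numerator by $2\int \mathcal{W}(\pi_{x_1},\nu)\,\mu(dx_1)$.

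Next I would fix $x_1\in \mathcal{X}$ and pick an arbitrary $\gamma\in \Pi(\pi_{x_1},\nu)$, together with an independent copy $\gamma'$. Letting $(Y',Y)\sim \gamma$ and $(Z',Z)\sim \gamma'$ be independent, so that $Y',Z'\sim \pi_{x_1}$ and $Y,Z\sim \nu$, the triangle inequality gives
\begin{align*}
\|Y-Z\|\le \|Y-Y'\|+\|Y'-Z'\|+\|Z'-Z\|.
\end{align*}
Taking expectations and rearranging yields
\begin{align*}
\int \|y-z\|\,\nu(dy)\,\nu(dz)-\int \|y'-z'\|\,\pi_{x_1}(dy')\,\pi_{x_1}(dz')\le 2\int \|y'-y\|\,\gamma(dy',dy),
\end{align*}
after which an infimum over $\gamma\in \Pi(\pi_{x_1},\nu)$ turns the right-hand side into $2\mathcal{W}(\pi_{x_1},\nu)$. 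Integrating against $\mu(dx_1)$ and dividing by the denominator $\int \|y-z\|\,\nu(dy)\,\nu(dz)$ produces the asserted bound $T^{\|\cdot\|}(\pi)\le 2\overrightarrow{\mathcal{W}}(\pi)$.

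The argument is entirely routine given the proof strategy of Lemma \ref{rem:1}, so I do not anticipate a real obstacle; the only point to be careful about is the measurability of $x_1\mapsto \mathcal{W}(\pi_{x_1},\nu)$ (needed to integrate against $\mu$), but this follows from standard regularity of disintegrations together with lower semicontinuity of $\mathcal{W}$, and in the present corollary it may simply be inherited from the framework already in force for the definition of $\overrightarrow{\mathcal{W}}$.
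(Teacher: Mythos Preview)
Your proof is correct and follows essentially the same route the paper indicates, namely the argument of Lemma~\ref{rem:1} with $|\cdot|_2$ replaced by a general norm. The only cosmetic difference is that the paper bounds $\|y-z\|-\|\tilde y-\tilde z\|$ via the reverse triangle inequality $\|y-z\|-\|\tilde y-\tilde z\|\le\|(y-\tilde y)-(z-\tilde z)\|\le\|y-\tilde y\|+\|z-\tilde z\|$, whereas you reach the same endpoint by two direct applications of the triangle inequality; the resulting estimate and the subsequent integration against $\mu$ are identical.
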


In particular all upper bounds derived in this article also hold for $T^{\|\cdot\|}(\pi)$, adjusting by a factor of $2$. However, the relation 
\begin{align*}
T^{\|\cdot\|}(\pi)=0 \text{ if and only if } \pi=\mu\otimes\nu
\end{align*}
might not hold, e.g. if $T^{\|\cdot\|}(\pi)$ only depends on a finite number of moments of $\pi$. Thus in general, the functional $\pi\mapsto \overrightarrow{\mathcal{W}}(\pi)$ offers greater flexibility than $\pi\mapsto T^{\|\cdot\|}(\pi)$ as it can be defined for any metric $d$ instead of just any norm $\|\cdot\|$, while it always satisfies the properties \textit{(i)}-\textit{(iii)} of Theorem \ref{thm:easy}.\\

Next we compare $\overrightarrow{\mathcal{W}}(\pi)$ to a (non-normalised version) of the Hellinger correlation introduced in \cite{geenens2020hellinger} for $\mu,\nu\in \mathrm{Prob}(\R)$. As this correlation is based on the Hellinger distance, singular measures are slightly intricate to handle. To avoid technicalities we thus only consider the following simple case:

\begin{lemma}\label{lem:hell}
Assume that the probability measures $\mu,\nu\in \text{Prob}(\R)$ and $\pi\in \Pi(\mu,\nu)$ have densities $f_\mu, f_\nu, f_\pi$ wrt. the Lebesgue measure. Then the Hellinger correlation $T^H(\pi)$ of \cite[Section 4]{geenens2020hellinger} can be written as
\begin{align*}
T^H(\pi)=\int \int \left( \sqrt{ f_\pi(x_1, x_2)}-\sqrt{ f_\mu(x_1) f_\nu(x_2)} \right)^2\,dx_1\,dx_2.
\end{align*}
If $\mu,\nu$ have bounded support, then there exists a constant $C>0$ such that
\begin{align*}
\overrightarrow{\mathcal{W}}(\pi) \le C \frac{T^H(\pi)}{\int |y-z|_2\,\nu(dy)\,\nu(dz)} .
\end{align*}
\end{lemma}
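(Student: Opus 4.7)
The plan is to control $\mathcal{W}(\pi_{x_1},\nu)$ pointwise in $x_1$ by a Hellinger-type quantity via the chain
\[
\mathcal{W}(\pi_{x_1},\nu)\;\le\;D\,\|\pi_{x_1}-\nu\|_{TV}\;\le\;D\,h(\pi_{x_1},\nu),
\]
where $D$ denotes the diameter of a compact set containing $\mathrm{supp}(\mu)\cup\mathrm{supp}(\nu)$ and $h(\alpha,\beta):=\bigl(\int(\sqrt{f_\alpha}-\sqrt{f_\beta})^2\,dx\bigr)^{1/2}$ is the Hellinger distance (up to a normalisation), and then to average in $\mu$ in order to recognise $T^H(\pi)$ on the right-hand side.

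First I would disintegrate the Hellinger correlation. Since $\mu$ has density $f_\mu$, the disintegration $\pi_{x_1}$ has density $f_{\pi_{x_1}}(x_2)=f_\pi(x_1,x_2)/f_\mu(x_1)$ for $\mu$-a.e.\ $x_1$. Substituting $\sqrt{f_\pi}=\sqrt{f_\mu}\sqrt{f_{\pi_{x_1}}}$ and $\sqrt{f_\mu f_\nu}=\sqrt{f_\mu}\sqrt{f_\nu}$ factors $f_\mu$ cleanly out of the integrand in the definition of $T^H$, yielding
\[
T^H(\pi)=\int f_\mu(x_1)\int\bigl(\sqrt{f_{\pi_{x_1}}(x_2)}-\sqrt{f_\nu(x_2)}\bigr)^2\,dx_2\,dx_1=\int h^2(\pi_{x_1},\nu)\,\mu(dx_1).
\]

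Next, for the pointwise Wasserstein-to-TV step I would use the bounded-support hypothesis: since $\pi_{x_1}$ and $\nu$ are both supported in a set of diameter at most $D$, every $1$-Lipschitz test function is, after centering, bounded by $D/2$ in absolute value, and Kantorovich-Rubinstein duality gives $\mathcal{W}(\pi_{x_1},\nu)\le D\,\|\pi_{x_1}-\nu\|_{TV}$. The TV-to-Hellinger step is the Le Cam inequality, which follows from the factorisation $|f-g|=|\sqrt{f}-\sqrt{g}|(\sqrt{f}+\sqrt{g})$, Cauchy-Schwarz, and the crude bound $\int(\sqrt{f}+\sqrt{g})^2\le 4$. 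Integrating the resulting pointwise inequality over $\mu$ and applying Cauchy-Schwarz once more in the $\mu$-variable yields
\[
\int \mathcal{W}(\pi_{x_1},\nu)\,\mu(dx_1)\;\le\; D\int h(\pi_{x_1},\nu)\,\mu(dx_1)\;\le\; D\sqrt{T^H(\pi)}.
\]

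Dividing by $\int|y-z|\,\nu(dy)\,\nu(dz)$ and using the uniform boundedness of $T^H$ on compact support (so that powers of $T^H$ can be compared up to a constant depending on the diameter) yields the stated bound with $C$ controlled by $D$. The main obstacle is the careful bookkeeping of normalisation constants through the two Cauchy-Schwarz applications and the diameter-dependent Wasserstein-to-TV comparison; note that the bounded-support assumption is indispensable, since without it no finite multiple of a Hellinger-type quantity can dominate $\mathcal{W}$ in general.
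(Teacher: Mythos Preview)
Your argument is sound through the bound $\int \mathcal{W}(\pi_{x_1},\nu)\,\mu(dx_1)\le D\sqrt{T^H(\pi)}$, but the closing step is a genuine gap: uniform boundedness of $T^H$ from above yields only $T^H(\pi)\le c\sqrt{T^H(\pi)}$, which is the wrong direction. To pass from $\sqrt{T^H(\pi)}$ to $T^H(\pi)$ you would need a uniform \emph{lower} bound on $T^H(\pi)$, and there is none, since $T^H(\pi)\to 0$ as $\pi\to\mu\otimes\nu$. In short, the chain $\mathcal{W}\le D\,\|\cdot\|_{TV}\le D\,h$ loses a square root that cannot be recovered afterwards.

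The paper's proof avoids this altogether by asserting the sharper pointwise comparison $\mathcal{W}(\pi_{x_1},\nu)\le C\,h^2(\pi_{x_1},\nu)$ on a bounded domain (citing Gibbs--Su), i.e.\ control by the \emph{squared} Hellinger distance rather than the Hellinger distance; integrating against $\mu$ and using your disintegration identity $T^H(\pi)=\int h^2(\pi_{x_1},\nu)\,\mu(dx_1)$ then gives $T^H$ on the right directly, with no Cauchy--Schwarz in the $\mu$-variable and hence no square root to undo. That said, the standard inequalities in Gibbs--Su only give $\mathcal{W}\le\mathrm{diam}\cdot TV\le C\,h$, and simple perturbations such as $\nu=\mathbf{1}_{[0,1]}$ versus $\pi_{x_1}=(1\pm 2\varepsilon)\mathbf{1}_{[0,1/2]}+(1\mp 2\varepsilon)\mathbf{1}_{[1/2,1]}$ give $\mathcal{W}\asymp\varepsilon$ while $h^2\asymp\varepsilon^2$, so the pointwise bound $\mathcal{W}\lesssim h^2$ appears not to hold in general; what your argument actually establishes, namely $\overrightarrow{\mathcal{W}}(\pi)\le C\sqrt{T^H(\pi)}\big/\!\int|y-z|\,\nu(dy)\nu(dz)$, may be the correct form of the inequality.
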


Lastly let us compare the Wasserstein correlation to a classical benchmark: recall that if $\pi$ is a bivariate Gaussian distribution, then the association between $\mu$ and $\nu$ is famously quantified via Pearson's correlation coefficient. It turns out that we can also compute $\overrightarrow{\mathcal{W}}(\pi)$ explicitly in this case:

\begin{lemma}[Comparison with Pearson's correlation coefficient in the case $p=2$] \label{lem:pearson}
Let $(\mathcal{X},d)=(\R,|\cdot|)$ and let $\pi=\mathcal{N}(a, \Sigma)$, where
$a=(a_1, a_2)$ is the mean and $$\Sigma=\begin{bmatrix}
&\sigma_1^2& \rho \sigma_1\sigma_2\\
&\rho \sigma_1 \sigma_2 &\sigma_2^2
\end{bmatrix}$$
is the variance of the bivariate normal distribution $\pi$. Here we assume $\sigma_1, \sigma_2> 0$ and note that $\rho\in [-1,1]$ is Pearson's correlation coefficient. Then $\overrightarrow{\mathcal{W}}_2(\pi)=1-\sqrt{1-\rho^2}.$
\end{lemma}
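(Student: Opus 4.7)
The plan is to exploit the fact that every marginal and every conditional distribution of a bivariate Gaussian is again Gaussian, so that both the numerator and the denominator of $\overrightarrow{\mathcal{W}}_2(\pi)$ can be computed in closed form via the one-dimensional Bures--Wasserstein formula.

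First I would identify the relevant distributions: $\mu=\mathcal{N}(a_1,\sigma_1^2)$, $\nu=\mathcal{N}(a_2,\sigma_2^2)$, and the standard formulas for conditional bivariate Gaussians give
\begin{align*}
\pi_{x_1}=\mathcal{N}\!\left(a_2+\rho\,\tfrac{\sigma_2}{\sigma_1}(x_1-a_1),\;\sigma_2^2(1-\rho^2)\right).
\end{align*}
Then I would invoke the closed-form expression for the quadratic Wasserstein distance between two univariate Gaussians,
\begin{align*}
\mathcal{W}_2\bigl(\mathcal{N}(m_1,s_1^2),\mathcal{N}(m_2,s_2^2)\bigr)^2=(m_1-m_2)^2+(s_1-s_2)^2,
\end{align*}
which is immediate from the fact that the optimal coupling between two one-dimensional Gaussians is the monotone rearrangement. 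Plugging in $m_1=a_2+\rho(\sigma_2/\sigma_1)(x_1-a_1)$, $m_2=a_2$, $s_1=\sigma_2\sqrt{1-\rho^2}$ and $s_2=\sigma_2$ yields
\begin{align*}
\mathcal{W}_2(\pi_{x_1},\nu)^2=\rho^2\,\tfrac{\sigma_2^2}{\sigma_1^2}(x_1-a_1)^2+\sigma_2^2\bigl(1-\sqrt{1-\rho^2}\bigr)^2.
\end{align*}

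Next I would integrate against $\mu$, using $\int(x_1-a_1)^2\,\mu(dx_1)=\sigma_1^2$ and the algebraic identity $(1-\sqrt{1-\rho^2})^2=2-\rho^2-2\sqrt{1-\rho^2}$, to obtain
\begin{align*}
\int\mathcal{W}_2(\pi_{x_1},\nu)^2\,\mu(dx_1)=\rho^2\sigma_2^2+\sigma_2^2\bigl(2-\rho^2-2\sqrt{1-\rho^2}\bigr)=2\sigma_2^2\bigl(1-\sqrt{1-\rho^2}\bigr).
\end{align*}
For the denominator, if $Y,Z\sim\nu$ are independent then $Y-Z\sim\mathcal{N}(0,2\sigma_2^2)$, so $\int(y-z)^2\,\nu(dy)\nu(dz)=2\sigma_2^2$. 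Forming the ratio prescribed by the definition of $\overrightarrow{\mathcal{W}}_2$ produces the claimed value $1-\sqrt{1-\rho^2}$, and the factor $\sigma_2^2$ cancels, confirming the expected scale invariance.

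There is no real obstacle here: the proof reduces to one closed-form identity for $\mathcal{W}_2$ between univariate Gaussians plus a short algebraic simplification. The only thing to be careful about is the cancellation $\rho^2+(1-\sqrt{1-\rho^2})^2=2(1-\sqrt{1-\rho^2})$, which is what makes the final expression depend on $\rho$ alone and not on the scale parameters $\sigma_1,\sigma_2$ or the means $a_1,a_2$.
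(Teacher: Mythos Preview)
Your proposal is correct and follows essentially the same route as the paper: identify the Gaussian conditional $\pi_{x_1}$, apply the closed-form $\mathcal{W}_2$ distance between one-dimensional Gaussians, integrate against $\mu$, and compute the denominator directly. The only cosmetic difference is that you use the compact form $(m_1-m_2)^2+(s_1-s_2)^2$, whereas the paper writes out the expanded Bures expression $s_1^2+s_2^2-2s_1s_2$ before simplifying.
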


We now compare different coefficients of correlation for the bivariate standard normal case in Figure \ref{fig:1}. As anticipated in Section \ref{sec:1} the Wasserstein correlation is convex in $\rho$.

\begin{center}
\begin{figure}[h!]
\includegraphics[scale=0.4]{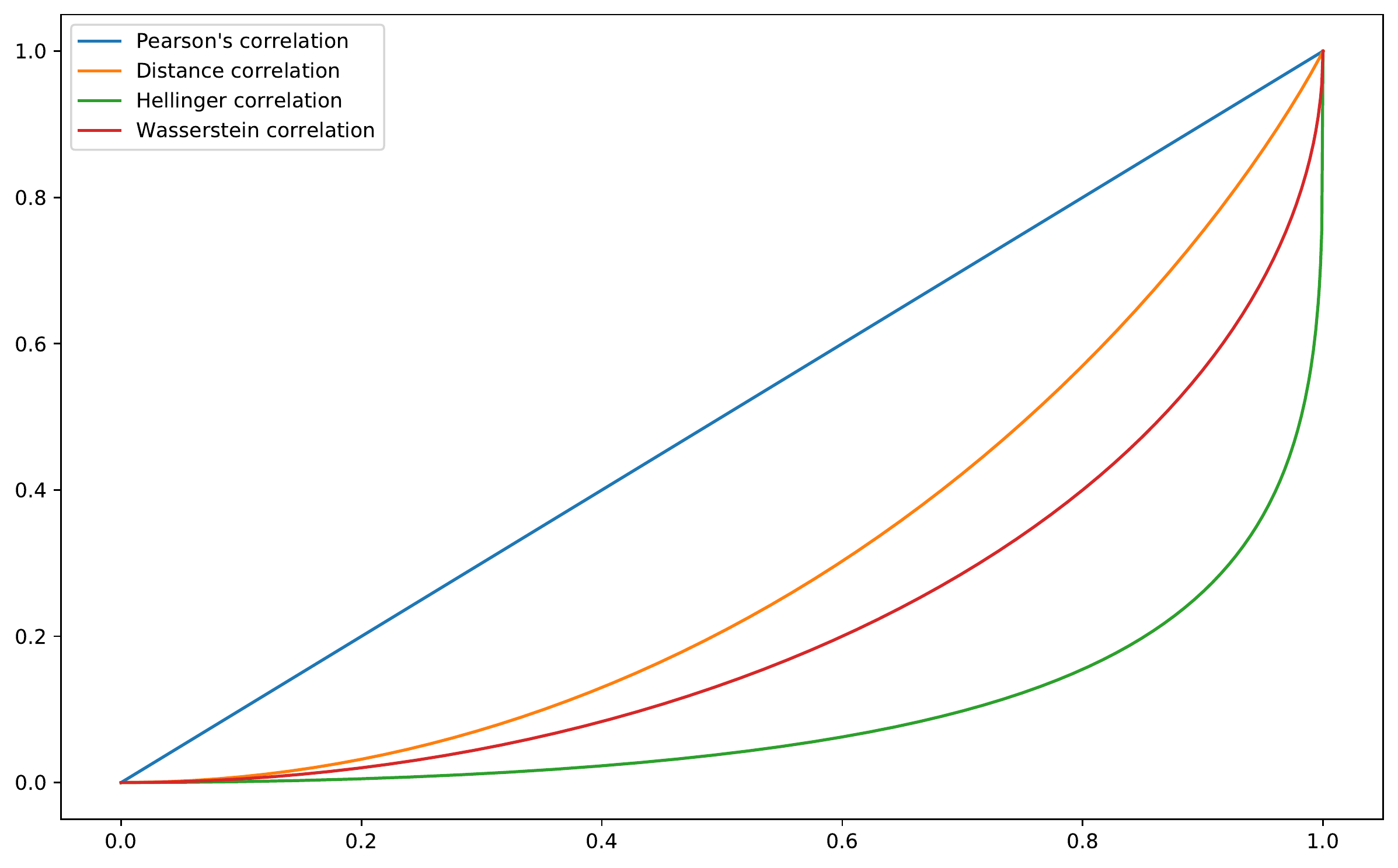}
\caption{Comparison of correlation coefficients as a function of $\rho$ for $\rho$-correlated bivariate Gaussian distribution}
\end{figure}\label{fig:1}
\end{center}

\begin{remark}
It turns out that the term $\sqrt{1-\rho^2}$ is the geometric mean of the eigenvalues of $\Sigma$, which can be interpreted as a geometric proxy for dependence. Unfortunately such an interpretation is no longer obvious in higher dimensions. Indeed for a bivariate normal distribution with covariance matrix
$$\Sigma=\begin{bmatrix}
&\Sigma_{11}& \Sigma_{12}\\
&\Sigma_{21} & \Sigma_{22}
\end{bmatrix}$$
we still have
\begin{align*}
\pi_{x_1}=\mathcal{N}\left( a_2+\Sigma_{21}\Sigma_{11}^{-1} (x_1-a_1), \Sigma_{22}-\Sigma_{21}\Sigma_{11}^{-1}\Sigma_{12} \right),
\end{align*}
but in general dimensions we obtain
\begin{align*}
\mathcal{W}_2(\pi_{x_1},\nu)^2 &= \left|\Sigma_{21} \Sigma_{11}^{-1} (x_1-a_1) \right|^2 +\mathrm{tr}\left(\Sigma_{22}\right)+\mathrm{tr}\left( \Sigma_{22}-\Sigma_{21}\Sigma_{11}^{-1}\Sigma_{12}\right)\\
&\quad -2 \mathrm{tr}\left(\left( \Sigma_{22}^2-\Sigma_{21}\Sigma_{11}^{-1}\Sigma_{12}\Sigma_{22} \right)^{1/2}\right).
\end{align*}
\end{remark}

\section{An estimator for the Wasserstein correlation and its asymptotic consistency}\label{sec:3}

We now investigate continuity properties of the functional $\pi\mapsto \overrightarrow{\mathcal{W}}(\pi)$, which will enable us to construct a plugin estimator. We then check its asymptotic consistency.\\\\
Let us first state that the functional $\pi\mapsto \overrightarrow{\mathcal{W}}(\pi)$ is continuous in the adapted Wasserstein distance $\mathcal{AW}$:

\begin{theorem}\label{thm:triangle}
For $\pi\in \Pi(\mu,\nu)$ and $\tilde{\pi}\in \Pi(\tilde{\mu},\tilde{\nu})$ we have 
\begin{align*}
\left| \int \mathcal{W}(\pi_{x_1}, \nu)\,\mu(dx_1) - \int \mathcal{W}(\tilde{\pi}_{y_1}, \tilde{\nu})\,\tilde{\mu}(dy_1) \right|&\le  \mathcal{AW}(\pi, \tilde{\pi})+\mathcal{W}(\nu,\tilde{\nu})\\
&\le 2 \mathcal{AW}(\pi, \tilde{\pi}) 
\end{align*}
and thus in particular
\begin{align*}
\left| \overrightarrow{\mathcal{W}}(\pi)-\overrightarrow{\mathcal{W}}(\tilde{\pi}) \right|&\le \frac{1}{f(\tilde{\nu})}\Big(\mathcal{AW}(\pi,\tilde{\pi})+\mathcal{W}(\nu,\tilde{\nu})+ g(\nu,\tilde{\nu}) \Big)\\
&\le \frac{1 }{f(\tilde{\nu})}\Big(\mathcal{AW}(\pi,\tilde{\pi})+3\mathcal{W}(\nu,\tilde{\nu}) \Big)\\
&\le \frac{4 }{f(\tilde{\nu})}\, \mathcal{AW}(\pi,\tilde{\pi}),
\end{align*}
where
\begin{align*}
f(\tilde{\nu})&:= \int  d(y,z)\,\tilde{\nu}(dy)\,\tilde{\nu}(dz) \\
g(\nu, \tilde{\nu})&:=\left|\int  d(y,z)\,\tilde{\nu}(dy)\,\tilde{\nu}(dz) - \int  d(y,z)\,\nu(dy)\,\nu(dz)  \right|.
\end{align*}
\end{theorem}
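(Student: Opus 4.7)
The plan for the first displayed chain is to lift the triangle inequality for $\mathcal{W}$ to the level of disintegrations. For any $\gamma\in\Pi(\mu,\tilde{\mu})=\Pi(\pi_1,\tilde{\pi}_1)$ and for $\gamma$-a.e.\ $(x_1,y_1)$,
\[
\mathcal{W}(\pi_{x_1},\nu)\le \mathcal{W}(\pi_{x_1},\tilde{\pi}_{y_1})+\mathcal{W}(\tilde{\pi}_{y_1},\tilde{\nu})+\mathcal{W}(\tilde{\nu},\nu).
\]
Integrating against $\gamma$ and using its marginals gives
\[
\int \mathcal{W}(\pi_{x_1},\nu)\,\mu(dx_1)-\int \mathcal{W}(\tilde{\pi}_{y_1},\tilde{\nu})\,\tilde{\mu}(dy_1)\le \int \mathcal{W}(\pi_{x_1},\tilde{\pi}_{y_1})\,\gamma(dx_1,dy_1)+\mathcal{W}(\nu,\tilde{\nu}).
\]
Since $d(x_1,y_1)\ge 0$, the first integral on the right is dominated by the $\mathcal{AW}$-cost of $\gamma$; infimising over $\gamma$ and then swapping the roles of $\pi,\tilde{\pi}$ produces the desired absolute-value bound. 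The auxiliary inequality $\mathcal{W}(\nu,\tilde{\nu})\le\mathcal{AW}(\pi,\tilde{\pi})$ (which yields the "$\le 2\mathcal{AW}$" bound) I would obtain by gluing: from a near-optimal $\gamma$ together with a Borel selection $(x_1,y_1)\mapsto \hat{q}_{x_1,y_1}\in\Pi(\pi_{x_1},\tilde{\pi}_{y_1})$ of $\mathcal{W}$-optimal couplings (standard on Polish spaces), assemble $Q\in\Pi(\pi,\tilde{\pi})$ and project onto the second-coordinate pair to get some $q\in\Pi(\nu,\tilde{\nu})$ with $\int d(x_2,y_2)\,q=\int\mathcal{W}(\pi_{x_1},\tilde{\pi}_{y_1})\,\gamma$, which is bounded by the $\mathcal{AW}$-cost of $\gamma$.

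\textbf{Step 2: ratio estimate.} Write $A=\int \mathcal{W}(\pi_{x_1},\nu)\,\mu(dx_1)$, $B=f(\nu)$ and $\tilde{A},\tilde{B}$ analogously. Start from the algebraic identity
\[
\frac{A}{B}-\frac{\tilde{A}}{\tilde{B}}=\frac{A-\tilde{A}}{\tilde{B}}+\frac{A}{B}\cdot\frac{\tilde{B}-B}{\tilde{B}}.
\]
Since $A/B=\overrightarrow{\mathcal{W}}(\pi)\le 1$ by Theorem \ref{thm:easy}(i) and $|\tilde{B}-B|=g(\nu,\tilde{\nu})$, this gives
\[
\left|\overrightarrow{\mathcal{W}}(\pi)-\overrightarrow{\mathcal{W}}(\tilde{\pi})\right|\le\frac{1}{f(\tilde{\nu})}\bigl(|A-\tilde{A}|+g(\nu,\tilde{\nu})\bigr),
\]
and substituting Step 1 yields the first line of the second displayed chain. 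The passage to the "$3\mathcal{W}(\nu,\tilde{\nu})$" line uses the estimate $g(\nu,\tilde{\nu})\le 2\mathcal{W}(\nu,\tilde{\nu})$, which I would prove by noting that for any $q\in\Pi(\nu,\tilde{\nu})$ the bound $|d(y_1,y_2)-d(x_1,x_2)|\le d(x_1,y_1)+d(x_2,y_2)$ gives, under the product coupling $q\otimes q$,
\[
g(\nu,\tilde{\nu})\le \int\!\!\int\bigl(d(x_1,y_1)+d(x_2,y_2)\bigr)\,q(dx_1,dy_1)\,q(dx_2,dy_2)=2\int d(x,y)\,q(dx,dy),
\]
and then taking the infimum over $q$. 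The final "$\le 4\mathcal{AW}$" line is then immediate from the second half of Step 1.

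\textbf{Main obstacle.} The proof is structurally routine once the adapted framework is in place; the only conceptually non-trivial observation is in Step 1, namely that the $d(x_1,y_1)$-component of the $\mathcal{AW}$-cost can simply be discarded as a nonnegative term once the triangle inequality has been applied to the conditional kernels. The measurability of $(x_1,y_1)\mapsto \mathcal{W}(\pi_{x_1},\tilde{\pi}_{y_1})$ and the measurable selection of the couplings $\hat{q}_{x_1,y_1}$ in the gluing step are standard facts for Polish spaces and would be cited rather than proved.
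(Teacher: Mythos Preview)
Your proof is correct and follows essentially the same route as the paper: triangle inequality for $\mathcal{W}$ applied to the kernels, integration against a (near-)optimal first-marginal coupling, the ratio identity with $\overrightarrow{\mathcal{W}}(\pi)\le 1$, and the bound $g(\nu,\tilde{\nu})\le 2\mathcal{W}(\nu,\tilde{\nu})$ via a product of an optimal coupling with itself. The only cosmetic difference is that the paper obtains $\mathcal{W}(\nu,\tilde{\nu})\le\mathcal{AW}(\pi,\tilde{\pi})$ by quoting the chain $\mathcal{W}(\nu,\tilde{\nu})\le\mathcal{W}(\pi,\tilde{\pi})\le\mathcal{AW}(\pi,\tilde{\pi})$ from \eqref{eq:AW_W}, whereas you spell out the gluing construction explicitly; these are the same argument.
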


We have the following immediate corollary:

\begin{corollary}\label{cor:consistency}
Let $\pi \in \Pi(\mu,\nu)$ such that 
\begin{align*}
\int d(x_2,x_0)\,\nu(dx_2) <\infty
\end{align*}
for any $x_0\in \mathcal{X}$ and let $\hat{\pi}^N$ be an $\mathcal{AW}$-consistent estimator of $\pi$. Then $\overrightarrow{\mathcal{W}}(\hat{\pi}^N)$ is an asymptotically consistent estimator of $\overrightarrow{\mathcal{W}}(\pi)$.
\end{corollary}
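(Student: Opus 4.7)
The plan is to reduce the corollary directly to Theorem \ref{thm:triangle}, which gives the quantitative estimate
\[
\left| \overrightarrow{\mathcal{W}}(\pi)-\overrightarrow{\mathcal{W}}(\hat{\pi}^N) \right| \le \frac{4}{f(\hat{\nu}^N)}\,\mathcal{AW}(\pi,\hat{\pi}^N),
\]
where $\hat{\nu}^N$ denotes the second marginal of $\hat{\pi}^N$. By hypothesis, $\mathcal{AW}(\pi,\hat{\pi}^N)\to 0$, so once I show that the denominator $f(\hat{\nu}^N)$ stays bounded away from zero (equivalently, converges to the nonzero constant $f(\nu)$), consistency follows immediately.

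The first step is to transfer $\mathcal{AW}$-consistency to Wasserstein consistency of the second marginals. Any coupling of $\pi$ and $\tilde{\pi}$ pushed forward by the projection $((x_1,x_2),(y_1,y_2))\mapsto (x_2,y_2)$ is a coupling of $\nu$ and $\tilde{\nu}$, and the cost in \eqref{eq:causal} dominates $d(x_2,y_2)$ after integrating out the conditional couplings; this yields $\mathcal{W}(\nu,\hat{\nu}^N)\le\mathcal{AW}(\pi,\hat{\pi}^N)$. In particular $\hat{\nu}^N\to\nu$ in $\mathcal{W}$. The moment assumption $\int d(x_2,x_0)\,\nu(dx_2)<\infty$ guarantees $\nu$ lies in the first Wasserstein space, so all quantities involved are finite.

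Next I check continuity of $\tilde{\nu}\mapsto f(\tilde{\nu})=\int d(y,z)\,\tilde{\nu}(dy)\,\tilde{\nu}(dz)$ under $\mathcal{W}$. Fixing $z$, the map $y\mapsto d(y,z)$ is $1$-Lipschitz, so by Kantorovich--Rubinstein duality
\[
\left|\int d(y,z)\,\nu(dy)-\int d(y,z)\,\hat{\nu}^N(dy)\right|\le \mathcal{W}(\nu,\hat{\nu}^N).
\]
Splitting $\nu\otimes\nu-\hat{\nu}^N\otimes\hat{\nu}^N=(\nu-\hat{\nu}^N)\otimes\nu+\hat{\nu}^N\otimes(\nu-\hat{\nu}^N)$ and integrating yields $|f(\nu)-f(\hat{\nu}^N)|\le 2\,\mathcal{W}(\nu,\hat{\nu}^N)\to 0$. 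Since $\nu$ is not a singleton, $f(\nu)>0$, hence $f(\hat{\nu}^N)$ is eventually bounded below by $f(\nu)/2$.

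Combining these observations with Theorem \ref{thm:triangle} gives
\[
\left| \overrightarrow{\mathcal{W}}(\pi)-\overrightarrow{\mathcal{W}}(\hat{\pi}^N) \right| \le \frac{8}{f(\nu)}\,\mathcal{AW}(\pi,\hat{\pi}^N)\longrightarrow 0,
\]
eventually in $N$, which establishes the claim. The only mildly delicate step is the control of the denominator $f(\hat{\nu}^N)$; the rest is a routine application of the previously established quantitative bound together with the trivial inequality $\mathcal{W}\le\mathcal{AW}$ on marginals.
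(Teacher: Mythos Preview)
Your proof is correct and follows essentially the same route as the paper: apply the bound from Theorem~\ref{thm:triangle}, then show $f(\hat{\nu}^N)\to f(\nu)>0$ via the estimate $|f(\nu)-f(\hat{\nu}^N)|\le 2\,\mathcal{W}(\nu,\hat{\nu}^N)\le 2\,\mathcal{AW}(\pi,\hat{\pi}^N)$. The paper simply cites this last inequality from the proof of Theorem~\ref{thm:triangle} (where it appears as the bound $g(\nu,\tilde{\nu})\le 2\,\mathcal{W}(\nu,\tilde{\nu})$), whereas you re-derive it via Kantorovich--Rubinstein duality, but this is only a cosmetic difference.
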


\begin{proof}
Theorem \ref{thm:triangle} yields
\begin{align*}
\begin{split}
\left| \overrightarrow{\mathcal{W}}(\pi)-\overrightarrow{\mathcal{W}}(\hat{\pi}^N) \right|
&\le \frac{4 }{f(\hat{\pi}^N_2)} \, \mathcal{AW}(\pi,\hat{\pi}^N).
\end{split}
\end{align*}
By assumption we have $\lim_{N\to \infty} \mathcal{AW}(\pi,\hat{\pi}^N)=0$. By the proof of Theorem \ref{thm:triangle} in the appendix we conclude that $g(\nu, \hat{\pi}^N_2)\le 2 \mathcal{AW}(\pi,\hat{\pi}^N)$ so that $$\lim_{N\to \infty} f(\hat{\pi}^N_2)=f(\nu),$$ where $f(\nu)>0$ by assumption. This concludes the proof.
\end{proof}

We now give an explicit example of an $\mathcal{AW}$-consistent estimator $\hat{\pi}^N$, which will then naturally facilitate a plugin estimator $\overrightarrow{\mathcal{W}}(\hat{\pi}^N)$ for $\overrightarrow{\mathcal{W}}(\pi)$. We only discuss here the case where $\pi$ is a probability measure on $([0,1]^d)^2$, where we equip $[0,1]^d$ with the Euclidean metric $|\cdot|_2$. Of course, our analysis can then easily be extended to probability measures on any compact subset of $\R^d$. Determining an explicit $\mathcal{AW}$-consistent estimator for non-compactly supported $\pi$ is still an open question and left for future research.\\ 
Before we explain the details of the construction, we need to introduce some additional notation: for a subset $F$ of $\R^d$ let  $\mathop{\mathrm{diam}} (F):=\sup_{x,y\in F} |x-y|_2$ and for any set A, let $|A|$ denote the number of elements in $A$. Lastly, for any $\pi\in \text{Prob}(([0,1]^d)^2)$ and any Borel set $G\subseteq [0,1]^d$ we define the conditional probability
\begin{align*}
\pi_G(\cdot)=\frac{1}{\pi_1(G)} \int_G \pi_{x_1}(\cdot)\,\pi_1(dx_1)\in \text{Prob}([0,1]^d),
\end{align*}
where we make the convention that $\pi_G:=\delta_{0}$ if $\pi_1(G)=0$.
\\
 
Let us assume that we are given i.i.d. samples $(X_1^1, X_2^1), (X_1^2, X_2^2), \dots, (X_1^N, X_2^N) $ of $\pi$. Let us partition the unit cube $[0,1]^d$ into a disjoint union of a finite number of cubes and let $\varphi^N \colon[0,1]^d\to[0,1]^d$ map each cube to its center. Then in particular $\varphi^N$ has a finite range for each $N\ge 1$. We now set
	\[ \picausal:=\frac{1}{N}\sum_{n=1}^N \delta_{\varphi^N(X_1^n),\varphi^N(X_2^n)} \]
	for each $N\geq 1$. In other words, if we define
\[\Phi^N:=\big\{(\varphi^N)^{-1}(\{x\}) : x\in \varphi^N([0,1]^d) \big\},\]
then
\[ [0,1]^d=\bigcup_{G\in \Phi^N} G \quad\text{disjoint.}
\]

One of the main results of \cite{backhoff2020estimating} is the following:
\begin{lemma}[{\cite[Theorem 1.3]{backhoff2020estimating}}]
\label{thm:almost.sure.convergence}
	Assume that $\lim_{N\to \infty} |\Phi^N|/N=0$. Then the adapted empirical measures is a strongly consistent estimator, that is, 
	\[  \lim_{N\to\infty} \mathcal{AW}(\pi,\picausal) = 0 \]	
	$P$-almost surely.
\end{lemma}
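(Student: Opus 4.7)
The plan is to decompose the adapted Wasserstein distance via the triangle inequality, introducing an intermediate deterministic discretisation $\pi^N \coloneq (\varphi^N,\varphi^N)_\# \pi$. Then
\begin{align*}
\mathcal{AW}(\pi,\picausal) \le \mathcal{AW}(\pi,\pi^N) + \mathcal{AW}(\pi^N,\picausal),
\end{align*}
and I would argue that each term vanishes almost surely. Note that the hypothesis $|\Phi^N|/N \to 0$ tacitly forces the partition to refine, so the mesh $\delta_N \coloneq \max_{G \in \Phi^N} \mathop{\mathrm{diam}}(G)$ should tend to $0$; I would assume (or impose) that the $\varphi^N$ are constructed from cubes of side length $1/m_N^{1/d}$, say, so this is automatic.

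For the first term, since $\varphi^N$ acts coordinatewise and deterministically, the coupling $(\mathrm{id},(\varphi^N,\varphi^N))_\# \pi$ is bicausal, giving
\begin{align*}
\mathcal{AW}(\pi,\pi^N) \le \int \left[|x_1-\varphi^N(x_1)|_2 + |x_2-\varphi^N(x_2)|_2\right] \pi(dx_1,dx_2) \le 2\delta_N \to 0.
\end{align*}

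For the second term, both $\pi^N$ and $\picausal$ are discrete measures supported on the finite set $C_N \coloneq \varphi^N([0,1]^d)$ of cube centres, which lets me write $\mathcal{AW}$ as a sum. For each $c \in C_N$ let $G(c)=(\varphi^N)^{-1}(c)$, $N_c=|\{n: X_1^n\in G(c)\}|$, and note that $\pi^N_c = (\varphi^N)_\# \pi_{G(c)}$ while $\picausal_c$ is the empirical distribution of $\{\varphi^N(X_2^n):X_1^n\in G(c)\}$. Taking as first-coordinate coupling a synchronous matching on the shared support $C_N$, one gets
\begin{align*}
\mathcal{AW}(\pi^N,\picausal) \le \sqrt d\,\|\pi^N_1 - \picausal_1\|_{TV} + \sum_{c \in C_N} \min\left(\pi^N_1(\{c\}),\picausal_1(\{c\})\right)\mathcal{W}(\pi^N_c,\picausal_c),
\end{align*}
plus a $\sqrt d$-bounded term from the total-variation mismatch on the conditionals. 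The first summand tends to zero a.s. because $\picausal_1$ is the empirical measure of $\pi^N_1$ on the finite set $C_N$ with $|C_N|=|\Phi^N|$, and a finite-alphabet LLN together with $|\Phi^N|/N\to 0$ yields $\|\pi^N_1-\picausal_1\|_{TV}\to 0$ a.s. For the conditional sum, on the event $\{N_c\ge 1\}$ the samples $\varphi^N(X_2^n)$ with $X_1^n\in G(c)$ are i.i.d.\ from $\pi^N_c$, so conditional LLN plus boundedness of the support ($\mathcal{W}\le \sqrt d$) handles each term.

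The main obstacle is controlling the conditional summand uniformly over $c$, since the number of samples $N_c$ per cube is random, the number of cubes $|\Phi^N|$ grows, and some cubes may receive few or zero samples. The condition $|\Phi^N|/N\to 0$ is precisely what ensures that asymptotically most of the $\pi_1$-mass sits in cubes with many samples; a Borel--Cantelli / concentration argument, partitioning the cubes into ``well-sampled'' ones (where conditional LLN applies) and ``poorly-sampled'' ones (whose total $\pi_1$-mass is small), gives the almost-sure bound. This is the technically delicate step and is the substance of Theorem~1.3 in \cite{backhoff2020estimating}; the rest of the argument is triangle inequality and Wasserstein coupling estimates.
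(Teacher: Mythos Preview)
Your overall decomposition $\mathcal{AW}(\pi,\picausal)\le \mathcal{AW}(\pi,\pi^N)+\mathcal{AW}(\pi^N,\picausal)$ is natural, and your treatment of the second term---TV on the finite first marginal plus a cube-wise conditional empirical argument driven by $|\Phi^N|/N\to 0$---is essentially what the paper (via \cite{backhoff2020estimating}) does through the rate $R(N/|\Phi^N|)$ and the centred fluctuation $\Delta^N$. That part is fine in outline.

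The genuine gap is your bound $\mathcal{AW}(\pi,\pi^N)\le 2\delta_N$. The coupling $(\mathrm{id},(\varphi^N,\varphi^N))_\#\pi$ is \emph{not} bicausal: it is causal (since $Y_1=\varphi^N(X_1)$ is $\sigma(X_1)$-measurable) but anticausality fails, because conditioning on $(Y_1,Y_2)=(\varphi^N(X_1),\varphi^N(X_2))$ gives information about $X_2$ and hence, through $\pi$, about $X_1$ beyond what $Y_1$ alone provides. Equivalently, in the nested formulation the conditional second-coordinate coupling under your $\Gamma$ has second marginal $(\varphi^N)_\#\pi_{x_1}$, whereas the required marginal is $\pi^N_{\varphi^N(x_1)}=(\varphi^N)_\#\pi_{G(x_1)}$, and these differ unless the kernel is constant on $G(x_1)$. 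With the legitimate first-marginal coupling $(\mathrm{id},\varphi^N)_\#\mu$ one only gets
\[
\mathcal{AW}(\pi,\pi^N)\le 2\delta_N+\int \mathcal{W}\bigl(\pi_{x_1},\pi_{G(x_1)}\bigr)\,\mu(dx_1),
\]
and the last integral is not controlled by $\delta_N$; it requires a separate argument. This is precisely why the paper's proof first \emph{reduces to the case of continuous disintegration} $x_1\mapsto\pi_{x_1}$: on the compact cube, uniform continuity gives $\mathcal{W}(\pi_{x_1},\pi_{G(x_1)})<\delta$ once the mesh is small, producing the ``$\delta+C(\delta)(\cdots)$'' structure you see in the paper. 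Without that reduction (or a martingale-convergence argument for the cube-averaged kernel), the deterministic term does not close.

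A minor point: $|\Phi^N|/N\to 0$ does not force the mesh to shrink (take $|\Phi^N|\equiv 1$), so your parenthetical ``tacitly forces the partition to refine'' is wrong; this has to be assumed, as you eventually concede, and is implicit in the cited result.
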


\section{The case of independent marginals: $\pi=\mu\otimes\nu$}\label{sec:4}

In this section we discuss convergence rates of $\overrightarrow{\mathcal{W}}(\picausal)$ for the case $\pi=\mu\otimes\nu$. We then show how to construct a test for independence of $\mu$ and $\nu$ using the estimator $\overrightarrow{\mathcal{W}}(\picausal)$. As $\overrightarrow{\mathcal{W}}(\pi)\in [0,1]$ for all $\pi\in \Pi(\mu,\nu)$ we cannot hope for a CLT as in \cite[Theorem 4.1]{deb2020kernel}. However, we can still obtain parametric convergence rates. Indeed, the main insight of this section is the following result:

\begin{theorem}\label{thm:test1}
If $\pi=\mu\otimes \nu$ then we have for all $\epsilon>0$
\begin{align*}
\P\left( \int \mathcal{W}\left(\picausal[x_1],\picausal[]_2\right)\,\picausal[]_1(dx_1) \ge \epsilon\right)&\le \exp\left( \log(2) \frac{(|\Phi^N|+1)^2}{N}-\frac{\varepsilon^{2}N}{ 2d}\right)
\end{align*}
and consequently 
\begin{align*}
\overrightarrow{\mathcal{W}}(\picausal) =O_P\left(\frac{|\Phi^N|}{\sqrt{N}}\right).
\end{align*}
In particular, if $$\lim_{N\to \infty} \frac{|\Phi^N|}{\sqrt{N}}=0\quad \text{and}\quad \lim_{N\to \infty} \frac{|\Phi^N|^2}{\log N}=\infty,$$ then there exists $C=C(\nu)>0$ such that the test: reject $\pi=\mu\otimes \nu$ if
\begin{align*}
\overrightarrow{\mathcal{W}}(\picausal)> C \frac{|\Phi^N|}{\sqrt{N}},
\end{align*}
satisfies the following:
\begin{itemize}
\item if $\pi=\mu\otimes \nu$ then there exists $N_0=N_0(\omega)\in \N$ such that $\overrightarrow{\mathcal{W}}(\picausal)\le C \frac{|\Phi^N|}{\sqrt{N}}$ for all $N\ge N_0$,
\item if $\pi\neq \mu\otimes \nu$, then there exists $N_0=N_0(\omega)\in \N$ such that $\overrightarrow{\mathcal{W}}(\picausal)> C \frac{|\Phi^N|}{\sqrt{N}}$ for all $N\ge N_0$.
\end{itemize}
\end{theorem}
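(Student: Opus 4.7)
The plan is to first isolate the numerator $S_N := \int \mathcal{W}(\widehat{\boldsymbol{\pi}}^N_{x_1}, \widehat{\boldsymbol{\pi}}^N_2)\,\widehat{\boldsymbol{\pi}}^N_1(dx_1)$ of $\overrightarrow{\mathcal{W}}(\widehat{\boldsymbol{\pi}}^N)$, prove the concentration bound for $S_N$, and then deduce the $O_P$ statement and the two test properties as essentially immediate consequences. Since $\widehat{\boldsymbol{\pi}}^N_1$ is atomic on the finite set $\varphi^N([0,1]^d)$, I would first rewrite
\begin{align*}
S_N = \sum_{G\in \Phi^N} \widehat{\boldsymbol{\pi}}^N_1(G)\, \mathcal{W}(\widehat{\boldsymbol{\pi}}^N_{G}, \widehat{\boldsymbol{\pi}}^N_2).
\end{align*}
Both $\widehat{\boldsymbol{\pi}}^N_{G}$ and $\widehat{\boldsymbol{\pi}}^N_2$ are supported on the same set of $|\Phi^N|$ cube centers inside a set of diameter $\sqrt{d}$, so the elementary bound $\mathcal{W}_1\le \tfrac{\mathrm{diam}}{2}\|\cdot\|_1$ yields $S_N\le \tfrac{\sqrt d}{2}\bigl\|\widehat{\boldsymbol{\pi}}^N - \widehat{\boldsymbol{\pi}}^N_1\otimes \widehat{\boldsymbol{\pi}}^N_2\bigr\|_1$, thereby reducing the task to bounding the $\ell^1$-distance between the empirical joint measure and the product of its empirical marginals on the discrete grid.

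Next, under $H_0:\pi=\mu\otimes\nu$ a routine computation shows that $\E[\widehat{\boldsymbol{\pi}}^N(G\times H)]=\mu(G)\nu(H)=\E[\widehat{\boldsymbol{\pi}}^N_1(G)\widehat{\boldsymbol{\pi}}^N_2(H)]$, so each summand $\widehat{\boldsymbol{\pi}}^N(G\times H)-\widehat{\boldsymbol{\pi}}^N_1(G)\widehat{\boldsymbol{\pi}}^N_2(H)$ is mean zero. Dualizing the $\ell^1$-norm as $\sup_{\sigma\in\{\pm1\}^{|\Phi^N|^2}}\langle\sigma,\cdot\rangle$ rewrites the upper bound on $S_N$ as a supremum of $V$-statistics
\begin{align*}
Z_\sigma=\frac{1}{N}\sum_n h_\sigma(X_1^n,X_2^n)-\frac{1}{N^2}\sum_{n,m}h_\sigma(X_1^n,X_2^m),\qquad h_\sigma(x_1,x_2):=\sigma_{\iota(x_1),\iota(x_2)},
\end{align*}
where $\iota(x)$ identifies the cube containing $x$. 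Each $Z_\sigma$ has mean zero, is bounded by $2$ in absolute value, and has bounded differences of order $1/N$ in the i.i.d.\ sample. Combining a Bernstein/McDiarmid-type concentration inequality for each $Z_\sigma$—exploiting the variance of order $1/N$ rather than merely the boundedness $O(1)$—with a union bound over the sign vectors and the choice $t=2\epsilon/\sqrt{d}$ should yield the announced tail bound $\P(S_N\ge\epsilon)\le \exp(\log(2)(|\Phi^N|+1)^2/N-\epsilon^2 N/(2d))$.

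For the $O_P$-rate, I would recall from Lemma~\ref{thm:almost.sure.convergence} and Theorem~\ref{thm:triangle} that $f(\widehat{\boldsymbol{\pi}}^N_2)\to f(\nu)>0$ almost surely, so $\overrightarrow{\mathcal{W}}(\widehat{\boldsymbol{\pi}}^N)=S_N/f(\widehat{\boldsymbol{\pi}}^N_2)$ is asymptotically of the same order as $S_N$. Plugging $\epsilon=C|\Phi^N|/\sqrt{N}$ into the tail bound produces the exponent $\log(2)(|\Phi^N|+1)^2/N-C^2|\Phi^N|^2/(2d)$; the assumption $|\Phi^N|/\sqrt{N}\to 0$ forces the first summand to vanish, so for $C$ large enough the whole bound vanishes uniformly in $N$, establishing $\overrightarrow{\mathcal{W}}(\widehat{\boldsymbol{\pi}}^N)=O_P(|\Phi^N|/\sqrt{N})$.

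Finally, for the test the assumption $|\Phi^N|^2/\log N\to\infty$ under $H_0$ renders the tail bound summable in $N$, so Borel--Cantelli yields $\overrightarrow{\mathcal{W}}(\widehat{\boldsymbol{\pi}}^N)\le C|\Phi^N|/\sqrt{N}$ for all $N\ge N_0(\omega)$ almost surely; under $H_1:\pi\neq\mu\otimes\nu$, Corollary~\ref{cor:consistency} yields $\overrightarrow{\mathcal{W}}(\widehat{\boldsymbol{\pi}}^N)\to\overrightarrow{\mathcal{W}}(\pi)>0$ a.s.\ while the threshold $C|\Phi^N|/\sqrt{N}\to 0$, so rejection holds eventually a.s. The main technical obstacle will be producing the precise exponent in the tail bound: the factor $(|\Phi^N|+1)^2/N$—with its crucial division by $N$—rules out the naive route of Hoeffding combined with an unweighted union bound (which yields only the worse $|\Phi^N|^2\log 2$), and instead calls for a refined Bernstein-type argument that genuinely uses the small $O(1/N)$ variance of each $V$-statistic $Z_\sigma$.
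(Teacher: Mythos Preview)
Your reduction of the numerator to a multiple of $\tilde T_N:=\|\widehat{\boldsymbol{\pi}}^N-\widehat{\boldsymbol{\pi}}^N_1\otimes\widehat{\boldsymbol{\pi}}^N_2\|_1$ via the diameter bound, followed by concentration for $\tilde T_N$, then the $O_P$-rate through $f(\widehat{\boldsymbol{\pi}}^N_2)\to f(\nu)>0$, Borel--Cantelli under $H_0$, and consistency under $H_1$, is exactly the route the paper takes. The paper does not re-derive the concentration step but simply invokes \cite[Theorem~1]{gretton2010consistent}, which gives
\[
\P(\tilde T_N\ge\epsilon)\le 2^{(|\Phi^N|+1)^2}\exp\!\Big(-\tfrac{N\epsilon^2}{2}\Big)
=\exp\!\Big(\log 2\,(|\Phi^N|+1)^2-\tfrac{N\epsilon^2}{2}\Big),
\]
and then absorbs the factor $\sqrt d$.

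The only place you diverge is precisely where you locate the ``main technical obstacle''. You take the factor $(|\Phi^N|+1)^2/N$ in the displayed statement at face value and conclude that the naive Hoeffding-plus-union-bound route---which, as you correctly observe, yields only $(|\Phi^N|+1)^2$ without division by $N$---must be upgraded via Bernstein. In fact the $/N$ is a typo in the statement: the paper's own proof establishes the exponent \emph{without} the division by $N$, via exactly the ``naive'' route you dismiss. That weaker bound already suffices for everything downstream: taking $\epsilon=2\sqrt{d\log 2}\,(|\Phi^N|+1)/\sqrt N$ makes the tail of order $\exp(-c|\Phi^N|^2)$, summable precisely under the hypothesis $|\Phi^N|^2/\log N\to\infty$, and the $O_P(|\Phi^N|/\sqrt N)$ rate follows once $C^2>2d\log 2$. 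The Bernstein refinement you propose is therefore unnecessary---and would not help in any case, since the $2^{(|\Phi^N|+1)^2}$ factor arises from the union bound over sign patterns and is unaffected by which concentration inequality you apply to each individual $Z_\sigma$.
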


We note here that as the construction of $\picausal$ is fully explicit and no additional assumptions on the measure $\pi$ are necessary, which makes the above result conceptually easy to apply.\\

Lastly, we can construct the following simple test statistic for independence of $\mu$ and $\nu$:

\begin{corollary}\label{cor:go}
Under the assumptions that $\mu$ and $\nu$ are non-atomic and $\pi=\mu\otimes\nu$, there exists a constant $C(\nu)$ such that the test: reject $\pi=\mu\otimes \nu$ if 
\begin{align*}
\overrightarrow{\mathcal{W}}(\picausal)> C(\nu)\left( \sqrt{\frac{2}{\pi}} \frac{|\Phi^N|}{\sqrt{N}}+\frac{\sigma}{\sqrt{N}} \Phi^{-1}(1-\alpha)\right),
\end{align*}
where $\Phi^{-1}$ denotes the quantile function of the standard normal distribution and $\sigma=1-2/\pi$, has asymptotic significance level $\alpha$.
\end{corollary}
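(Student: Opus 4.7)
The plan is to refine Theorem \ref{thm:test1} from a Markov-type tail bound into a genuine central limit statement, since the threshold in Corollary \ref{cor:go} has the form $\text{mean} + \sigma\cdot\Phi^{-1}(1-\alpha)/\sqrt{N}$ with $\sqrt{2/\pi}=\mathbb{E}|Z|$ and $1-2/\pi=\mathrm{Var}|Z|$ for $Z\sim\mathcal{N}(0,1)$. This strongly suggests that the per-cell Wasserstein fluctuations behave asymptotically like i.i.d.\ half-normal variables.

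First, I would handle the normalising denominator. By Slutsky's theorem it suffices to show $f(\picausal[]_2)\to f(\nu)$ almost surely, which follows either from a direct U-statistic SLLN or from the estimate $g(\nu,\picausal[]_2)\le 2\mathcal{AW}(\pi,\picausal)$ extracted from the proof of Theorem \ref{thm:triangle}, combined with Lemma \ref{thm:almost.sure.convergence}. Setting $C(\nu):=1/f(\nu)$, the rejection event is asymptotically equivalent to
\[ \sqrt{N}\int \mathcal{W}(\picausal[x_1],\picausal[]_2)\,\picausal[]_1(dx_1) > \sqrt{2/\pi}\,|\Phi^N| + \sigma\,\Phi^{-1}(1-\alpha). \]

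Second, the core task is to establish, under $H_0:\pi=\mu\otimes\nu$, the CLT
\[ \frac{\sqrt{N}\int \mathcal{W}(\picausal[x_1],\picausal[]_2)\,\picausal[]_1(dx_1) - \sqrt{2/\pi}\,|\Phi^N|}{\sigma} \xrightarrow{d} \mathcal{N}(0,1). \]
The natural route is the cell-wise decomposition
\[ \int \mathcal{W}(\picausal[x_1],\picausal[]_2)\,\picausal[]_1(dx_1) = \sum_{G\in\Phi^N} \frac{N_G}{N}\,\mathcal{W}\bigl(\picausal[\varphi^N(G)],\picausal[]_2\bigr), \]
with $N_G=|\{n:X_1^n\in G\}|$. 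Under independence, conditioning on the first coordinates makes the $X_2^n$ i.i.d.\ from $\nu$, so the per-cell conditional empirical measures $\picausal[\varphi^N(G)]$ are (conditionally) independent empirical measures from the discretisation of $\nu$, each based on $N_G\asymp N/|\Phi^N|$ samples thanks to non-atomicity of $\mu$. A Lindeberg--Feller CLT for these cell contributions, together with a Brownian-bridge approximation for the one-dimensional Wasserstein distance, should deliver the Gaussian limit with the claimed constants.

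Given the CLT, the conclusion is immediate: $P(\text{reject})\to P(Z>\Phi^{-1}(1-\alpha))=\alpha$. The main obstacle is the CLT and, in particular, identifying the precise mean $\sqrt{2/\pi}|\Phi^N|$ and variance $\sigma^2$: the cell contributions are not exactly independent (they share the common reference $\picausal[]_2$), their per-cell sample sizes shrink with $|\Phi^N|$, and one must verify a Lindeberg condition that constrains how fast $|\Phi^N|$ may grow relative to $N$. Non-atomicity of both marginals is essential to ensure regular cell counts and non-degenerate Wasserstein fluctuations. I would expect to combine empirical-process asymptotics with the moment estimates from \cite{backhoff2020estimating} to carry out the conditional CLT and close the proof.
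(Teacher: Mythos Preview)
Your approach has a genuine gap: you are attempting to prove a CLT directly for the Wasserstein numerator $\int \mathcal{W}(\picausal[x_1],\picausal[]_2)\,\picausal[]_1(dx_1)$, but the paper neither needs nor claims such a CLT. In fact, the numerical study in Appendix~\ref{sec:app_b} explicitly suggests that the centred and scaled numerator has \emph{slimmer} tails than a standard normal, so the CLT you are aiming for is likely false. The constants $\sqrt{2/\pi}$ and $\sigma=1-2/\pi$ do not arise from half-normal behaviour of per-cell Wasserstein fluctuations; they are inherited verbatim from a known CLT for a \emph{different} statistic.

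The paper's argument is much shorter. From the proof of Theorem~\ref{thm:test1} one has the pointwise inequality
\[
\overrightarrow{\mathcal{W}}(\picausal)\le \frac{\sqrt{d}\,\tilde{T}_N(\pi)}{f(\picausal[]_2)}\le C(\nu)\,\tilde{T}_N(\pi)
\]
for all sufficiently large $N$, where $\tilde{T}_N$ is the $\ell^1$ contingency-table statistic of \cite{gretton2010consistent}. Lemma~\ref{prop:1} (i.e.\ \cite[Theorem~3]{gretton2010consistent}) already provides the CLT $\sqrt{N}(\tilde{T}_N-C_N)/\sigma\Rightarrow\mathcal{N}(0,1)$ with a centering $C_N\le\sqrt{2/\pi}\,|\Phi^N|/\sqrt{N}$ and $\sigma=1-2/\pi$. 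Combining the inequality with this CLT gives
\[
P\!\left(\overrightarrow{\mathcal{W}}(\picausal)>C(\nu)\Big(\sqrt{\tfrac{2}{\pi}}\tfrac{|\Phi^N|}{\sqrt{N}}+\tfrac{\sigma}{\sqrt{N}}\Phi^{-1}(1-\alpha)\Big)\right)
\le P\!\left(\tilde{T}_N>C_N+\tfrac{\sigma}{\sqrt{N}}\Phi^{-1}(1-\alpha)\right)\to\alpha.
\]
Thus ``asymptotic significance level $\alpha$'' is meant as an \emph{upper bound} on the type-I error, not exact size $\alpha$; your Slutsky-plus-CLT route would deliver the latter, which is not what is claimed and probably not true.
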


\begin{proof}
As in the proof of Theorem \ref{thm:test1}, this follows from the inequality
\begin{align*}
\overrightarrow{\mathcal{W}}(\picausal)\le \frac{\sqrt{d}\, \tilde{T}_N(\pi)}{ \int |x_2-y|\,\picausal[]_2(dx_2)\,\picausal[]_2(dy)}\le C(\nu) \tilde{T}_N(\pi)
\end{align*}
for some $C(\nu)>0$, which holds for all sufficiently large $N\in \N$. Here $\tilde{T}_N$ is given by 
\begin{align*}
\tilde{T}_N(\pi)&:=
\sum_{G\in \Phi^N} \sum_{H\in\Phi^N} \Bigg|\frac{|n\in \{1,\dots, N\} \text{ s.t. } X_1^n\in G, X_2^n\in H|}{N}\\
&\qquad\qquad- \frac{|n\in \{1, \dots, N\} \text{ s.t. } X_1^n\in G|}{N}\cdot \frac{|n\in \{1,\dots, N\} \text{ s.t. } X_2^n\in H|}{N}\Bigg|.
\end{align*}
We the conclude by Lemma \cite[Lemma 1.1]{jwb}. 
\end{proof}

\section{General convergence rates for the Wasserstein correlation}\label{sec:5}

We now derive general rates of convergence for $\overrightarrow{\mathcal{W}}(\pi)$, using results recently obtained in \cite{backhoff2020estimating}. In particular we slightly refine the definition of $\varphi^N$ and thus the adapted empirical measure $\picausal$ given in Section \ref{sec:3} as follows: we set $r=1/3$ for $d=1$ and $r=1/(2d)$ for all $d\geq 2$.
	For all $N\geq 1$, let us  now partition the cube $[0,1]^d$ into the disjoint union of $N^{rd}$ cubes with edges of length $N^{-r}$ and let $\varphi^N\colon[0,1]^d\to[0,1]^d$ map each such small cube to its center. As before we then set
	\[ \picausal:=\frac{1}{N}\sum_{n=1}^N \delta_{\varphi^N(X_1^n),\varphi^N(X_2^n)}. \]
	for each $N\geq 1$.

In order to quantify the speed of convergence, we assume the following regularity property on the disintegration of $\pi$ for the remainder of this section:

\begin{assumption}[Lipschitz kernels]
\label{ass:lipschitz.kernel}
	There is a version of the ($\mu$-a.s.\ uniquely defined) disintegration such that the mapping
	\begin{align*}
	([0,1]^d)
	\ni x_1
	\mapsto \pi_{x_1}
	\in \mathrm{Prob}([0,1]^d)
	\end{align*}
	is Lipschitz continuous, where $\mathrm{Prob}([0,1]^d)$ is endowed with its usual Wasserstein distance $\mathcal{W}$.
\end{assumption} 

\cite[Lemma 1.3]{jwb} and \cite[Lemma 1.4]{jwb} stated in the appendix together with Theorem \ref{thm:triangle} immediately enable us to deduce average convergence rates and a deviation result for the plugin estimator $\overrightarrow{\mathcal{W}}(\picausal)$. More concretely we obtain the following:

\begin{theorem}
Under Assumption \ref{ass:lipschitz.kernel}, there is a constant $C>0$ such that 
\begin{align*}
&\E \left| \int \mathcal{W}\left(\picausal[x_1],\picausal[]_2\right)\,\picausal[]_1(dx_1) -\int \mathcal{W}\left(\pi_{x_1},\nu\right)\,\mu(dx_1) \right|\\
&\qquad\le C(\nu) \cdot
	\begin{cases}
	N^{-1/3} &\text{for } d=1,\\
	N^{-1/4}\log(N+1) &\text{for } d=2,\\
	N^{-1/(2d)} &\text{for } d\geq 3.
	\end{cases} \\
\end{align*}
In particular we have
\begin{align*}
\left| \overrightarrow{\mathcal{W}}(\picausal) -\overrightarrow{\mathcal{W}}(\pi)\right| =
	\begin{cases}
	O_P(N^{-1/3}) &\text{for } d=1,\\
	O_P(N^{-1/(2d)}) &\text{for } d\geq 2.
	\end{cases} 
\end{align*}
\end{theorem}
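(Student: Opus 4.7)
The plan is to combine the continuity estimate of Theorem \ref{thm:triangle} with the quantitative convergence rates for the adapted empirical measure that are recalled from \cite{backhoff2020estimating}. First I would apply Theorem \ref{thm:triangle} with $\tilde\pi=\picausal$ (so $\tilde\mu=\picausal[]_1$ and $\tilde\nu=\picausal[]_2$) to obtain
\[
\left| \int \mathcal{W}(\picausal[x_1], \picausal[]_2)\,\picausal[]_1(dx_1) - \int \mathcal{W}(\pi_{x_1}, \nu)\,\mu(dx_1) \right| \le 2\,\mathcal{AW}(\pi, \picausal).
\]
Taking expectations and invoking \cite[Lemma 1.3]{jwb} (whose upper bounds on $\E[\mathcal{AW}(\pi,\picausal)]$ use precisely Assumption \ref{ass:lipschitz.kernel} and produce the three $d$-dependent rates stated in the theorem) immediately yields the first displayed inequality.

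For the $O_P$ statement I would use the sharper form of Theorem \ref{thm:triangle},
\[
\left|\overrightarrow{\mathcal{W}}(\pi)-\overrightarrow{\mathcal{W}}(\picausal)\right|\le \frac{4}{f(\picausal[]_2)}\,\mathcal{AW}(\pi,\picausal),
\]
and handle the prefactor and $\mathcal{AW}(\pi,\picausal)$ separately. The prefactor is dealt with exactly as in the proof of Corollary \ref{cor:consistency}: the bound $g(\nu,\picausal[]_2)\le 2\,\mathcal{AW}(\pi,\picausal)$ (embedded in the proof of Theorem \ref{thm:triangle}) together with the strong $\mathcal{AW}$-consistency of $\picausal$ from Lemma \ref{thm:almost.sure.convergence} shows that $f(\picausal[]_2)\to f(\nu)>0$ almost surely, so $4/f(\picausal[]_2)=O_P(1)$.

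It then suffices to show that $\mathcal{AW}(\pi,\picausal)$ itself attains the declared $O_P$ rate. For $d=1$ and $d\ge 3$ this is immediate from the first part of the theorem (already proved above) combined with Markov's inequality, which converts an expectation bound of order $a_N$ into an $O_P(a_N)$ bound. The delicate case is $d=2$, where the expectation bound carries an extra $\log(N+1)$ factor that is not present in the claimed $O_P(N^{-1/(2d)})$ rate; the plan is to close this gap by invoking the sharper concentration estimate of \cite[Lemma 1.4]{jwb} instead of Markov's inequality, converting an exponential tail bound on $\mathcal{AW}(\pi,\picausal)$ directly into the desired $O_P$ statement. This mild logarithmic discrepancy between the expectation and probability bounds in dimension two is the only technical obstacle; the remainder of the argument is a straightforward combination of Theorem \ref{thm:triangle}, Lemma \ref{thm:almost.sure.convergence}, and the off-the-shelf estimates from \cite{backhoff2020estimating}.
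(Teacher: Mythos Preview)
Your approach is essentially the same as the paper's: apply Theorem \ref{thm:triangle} to bound the numerator difference by $2\,\mathcal{AW}(\pi,\picausal)$, take expectations and invoke the rate lemma for the first claim, and then combine the quotient bound $|\overrightarrow{\mathcal{W}}(\pi)-\overrightarrow{\mathcal{W}}(\picausal)|\le 4\,\mathcal{AW}(\pi,\picausal)/f(\picausal[]_2)$ with $O_P(1)$-control of the prefactor for the second. The only real difference is cosmetic: the paper controls $f(\picausal[]_2)$ via \cite[Lemma 1.2]{jwb} (a U-statistic CLT giving an explicit rate for $f(\picausal[]_2)-f(\nu)$), whereas you use almost-sure consistency from Lemma \ref{thm:almost.sure.convergence}. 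Both yield $1/f(\picausal[]_2)=O_P(1)$, so this is immaterial.

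One remark on the $d=2$ case. You correctly flag that Markov's inequality on the expectation bound leaves a $\log(N+1)$ factor, and you propose to remove it via the concentration estimate \cite[Lemma 1.4]{jwb}. This does \emph{not} work as stated: that concentration inequality reads $P[\mathcal{AW}(\pi,\picausal)\ge C\,\mathrm{rate}(N)+\varepsilon]\le 4\exp(-cN\varepsilon^2)$, and for $d=2$ the centering term $\mathrm{rate}(N)=N^{-1/4}\log(N+1)$ already carries the logarithm, so no choice of $\varepsilon$ eliminates it. The paper's own proof does not treat $d=2$ separately either; it simply cites \cite[Lemma 1.3]{jwb} uniformly and records the resulting $O_P$ rates, so the logarithmic discrepancy you noticed is in fact not resolved in the paper. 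Your overall argument is therefore at least as complete as the paper's; just be aware that the concentration lemma is not the tool that closes this particular gap.
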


\begin{remark}
We note that both \cite[Theorem 5.1]{dette2013copula} as well as \cite[Theorem 2.2]{chatterjee2020new} (under the assumption that $\pi=\mu\otimes\nu$) derive a CLT for their corresponding estimators of the correlation coefficient $T^{DGSC}$, while our convergence rates suffer from the ``curse of dimensionality" -- a by now well-studied phenomenon for Wasserstein distances (see e.g. \cite{fournier2015rate,weed2017sharp}). In particular \cite{dette2013copula,chatterjee2020new} achieve strictly better convergence rates than we do even if $d=1$, at least if $\pi\neq \mu\otimes \nu$. We recall however that $T^{DGSC}$ is only well-defined for a 1-dimensional distribution $\nu\in \mathrm{Prob}(\R)$ and cannot easily be generalised. In contrast, the main goals of this article was to provide a correlation coefficient which can be defined on arbitrary Polish spaces $\mathcal{X}$.
\end{remark}


\begin{proof}
By Theorem \ref{thm:triangle} we have
\begin{align*}
\int \mathcal{W}\left(\picausal[x_1],\picausal[]_2\right)\,\picausal[]_1(dx_1) -\int \mathcal{W}\left(\pi_{x_1},\nu\right)\,\mu(dx_1)\le 2 \mathcal{AW}(\pi,\picausal),
\end{align*}
so the first claim follows from \cite[Lemma 1.3]{jwb} replacing $C$ by $2C$.
Moreover, Theorem \ref{thm:triangle} also states that
\begin{align*}
\left| \overrightarrow{\mathcal{W}}(\pi)-\overrightarrow{\mathcal{W}}(\picausal) \right|\le \frac{4 }{f((\picausal)^2)}\, \mathcal{AW}(\pi,\picausal)
\end{align*}
Combining this with \cite[Lemma 1.2]{jwb}, which states that
\begin{align*}
\left(\sqrt{N}\wedge \frac{1}{2\cdot \sup_{x} |\varphi^N(x)-x|} \right)\left(f(\picausal[]_2)-f(\nu)\right)
&=\left(\sqrt{N}\wedge \frac{N^r}{2}\right)\left(f(\picausal[]_2)-f(\nu)\right)\\
&= \frac{N^r}{2}\left(f(\picausal[]_2)-f(\nu)\right)=O_P(1),
\end{align*}
and as $f(\nu)>0$, 
we again obtain the claim from \cite[Lemma 1.3]{jwb}.
This concludes the proof.
\end{proof}

In a similar fashion we can derive concentration bounds from \cite[Lemma 1.4]{jwb}:

\begin{theorem}
Under Assumption \ref{ass:lipschitz.kernel} there are constants $c,C>0$ such that
\begin{align*}
&P\left[ \left| \int \mathcal{W}\left(\picausal[x_1],\picausal[]_2\right)\,\picausal[]_1(dx_1) -\int \mathcal{W}\left(\pi_{x_1},\nu\right)\,\mu(dx_1) \right|  \geq 2C\mathop{\mathrm{rate}}(N)+\varepsilon \right]\\
	&\quad \leq 4\exp\Big( -cN\varepsilon^2 \Big) 
\end{align*} 
	for all $N\geq 1$ and all $\varepsilon>0$.
\end{theorem}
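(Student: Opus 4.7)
The plan is to argue exactly as in the proof of the previous theorem, but with the expectation bound of \cite[Lemma 1.3]{jwb} replaced by the concentration bound of \cite[Lemma 1.4]{jwb}. The argument has essentially two steps.

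First, I would apply Theorem \ref{thm:triangle} with $\tilde{\pi}=\picausal$ (so that $\tilde{\mu}=\picausal[]_1$ and $\tilde{\nu}=\picausal[]_2$). This produces the purely deterministic inequality
\begin{align*}
\left| \int \mathcal{W}\left(\picausal[x_1],\picausal[]_2\right)\,\picausal[]_1(dx_1) -\int \mathcal{W}\left(\pi_{x_1},\nu\right)\,\mu(dx_1) \right|
\le 2\, \mathcal{AW}(\pi, \picausal).
\end{align*}
Consequently it is enough to prove a concentration bound of the form
\begin{align*}
P\left[\, \mathcal{AW}(\pi, \picausal) \ge C\,\mathop{\mathrm{rate}}(N) + \delta \,\right] \le 2\exp\bigl(-c' N \delta^2\bigr)
\end{align*}
for some constants $c',C>0$, since then choosing $\delta=\varepsilon/2$ and combining with the displayed deterministic inequality yields
\begin{align*}
P\left[\, \Bigl|\,\cdot\,\Bigr| \ge 2C\,\mathop{\mathrm{rate}}(N) + \varepsilon \,\right] \le 2\exp\bigl(-c' N \varepsilon^2/4\bigr),
\end{align*}
which after relabelling $c'/4\mapsto c$ is the advertised bound (with room to spare in the prefactor, $2\le 4$).

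Second, I would obtain the displayed concentration bound on $\mathcal{AW}(\pi,\picausal)$ from \cite[Lemma 1.4]{jwb} combined with \cite[Lemma 1.3]{jwb}: the former provides a McDiarmid-type deviation inequality for $\mathcal{AW}(\pi,\picausal)$ around its mean under Assumption \ref{ass:lipschitz.kernel}, and the latter controls the mean by $C\,\mathop{\mathrm{rate}}(N)$. Once the two external lemmas are in hand, the entire proof is a two-line consequence of Theorem \ref{thm:triangle}.

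The main obstacle is hidden inside \cite[Lemma 1.4]{jwb}: the non-trivial point is showing that the random variable $\mathcal{AW}(\pi,\picausal)$ enjoys a bounded-differences property in the i.i.d.\ sample $(X_1^n,X_2^n)_{n=1}^N$, so that McDiarmid's inequality is applicable. This is somewhat delicate because changing a single sample perturbs both the partition-based first-marginal empirical measure $\picausal[]_1$ and the conditional empirical measures $\picausal[x_1]$ on the cell to which the sample belongs; however, since the ambient space $[0,1]^d$ has diameter $\sqrt{d}$, each such one-sample replacement moves $\picausal$ in $\mathcal{AW}$ by at most $O(1/N)$, and the Lipschitz assumption on $x_1\mapsto\pi_{x_1}$ is what calibrates the global $\mathop{\mathrm{rate}}(N)$. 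All of this structural work is absorbed into \cite[Lemma 1.4]{jwb}, and the present theorem is then a direct application.
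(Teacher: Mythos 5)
Your proposal is correct and follows essentially the same route as the paper: bound the difference deterministically by $2\,\mathcal{AW}(\pi,\picausal)$ via Theorem \ref{thm:triangle}, then invoke the deviation bound for $\mathcal{AW}(\pi,\picausal)$ (Lemma \ref{thm:deviation}) with $\varepsilon$ replaced by $\varepsilon/2$, rescaling $c$ to $c/4$. The only cosmetic difference is that the cited lemma already comes with the prefactor $4$ (so no slack is needed there), and your speculation about its McDiarmid-type internal structure is external to the present argument.
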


\begin{proof}
Using again Theorem \ref{thm:triangle}, this time together with Lemma \cite[Lemma 1.4]{jwb}, we obtain the existence of two constants $c,C>0$
\begin{align*}
&P\left[ \left| \int \mathcal{W}\left(\picausal[x_1],\picausal[]_2\right)\,\picausal[]_1(dx_1) -\int \mathcal{W}\left(\pi_{x_1},\nu\right)\,\mu(dx_1) \right|  \geq 2C\mathop{\mathrm{rate}}(N)+\varepsilon \right]\\
 & \le P\Big[ 2\mathcal{AW}(\mu,\picausal)  \geq 2C\mathop{\mathrm{rate}}(N)+\varepsilon \Big]\\
	& \leq 4\exp\Big( -cN\varepsilon^2 \Big) 
\end{align*}
replacing $c$ by $c/4$ in \cite[Lemma 1.4]{jwb}. This concludes the proof.
\end{proof}

\section{Numerical examples}\label{sec:7}

In this section we numerically compare the Wasserstein correlation with other measures of dependence. In particular we exhibit the behaviour of the Pearson, Spearman, Chatterjee, distance and Wasserstein correlation. The code for the numerical implementation can be found under \href{https://github.com/johanneswiesel/Wasserstein-correlation}{https://github.com/johanneswiesel/Wasserstein-correlation}. In particular we use the Xicor package (\href{https://github.com/czbiohub/xicor/}{https://github.com/czbiohub/xicor/}) to compute Chattterjee's correlation coefficient. We use the GitHub depository satra/distcorr.py (\href{https://gist.github.com/satra/aa3d19a12b74e9ab7941}{https://gist.github.com/satra/aa3d19a12b74e9ab7941}) to compute the distance correlation and the POT package (\href{https://pythonot.github.io}{https://pythonot.github.io}) to compute the Wasserstein correlation efficiently using entropic penalisation. For simplicity we keep the penalisation parameter fixed at level $\epsilon=0.01$. While entropic penalisation considerably speeds up the computation compared to the exact solution of the optimal transport problem, the computational burden of the Wasserstein correlation is nevertheless non-negligible compared to the other measures of association discussed here. Mitigating these computational disadvantages (particularly in high dimensions) is however an active topic of research. In the case of $\overrightarrow{\mathcal{W}}$ it might also be overcome by using an estimation procedure specifically adapted for the Wasserstein correlation. \\
 
In order to compare the behaviour of the correlation coefficients listed above for different kinds of dependence between $X_1$ and $X_2$, we plot four examples in Figures \ref{fig:2} and \ref{fig:3}: in all cases we take $X_1$ to be uniformly distributed on the interval $[0,1]$, while $X_2= f(\rho X_1+\sqrt{1-\rho^2} U)$ for an independent random variable $U$ with the same law as $X_1$. We consider the functions $f(x)=x$, $f(x)=|x-0.5|$, $f(x)=(x-0.5)^3$ and $f(x)=\sin(3x)$. We take $N=1000$ samples and plot the average over $30$ different draws below.

\begin{figure}[h!]
\begin{center}
\begin{minipage}{0.48\textwidth}
\includegraphics[scale=0.25]{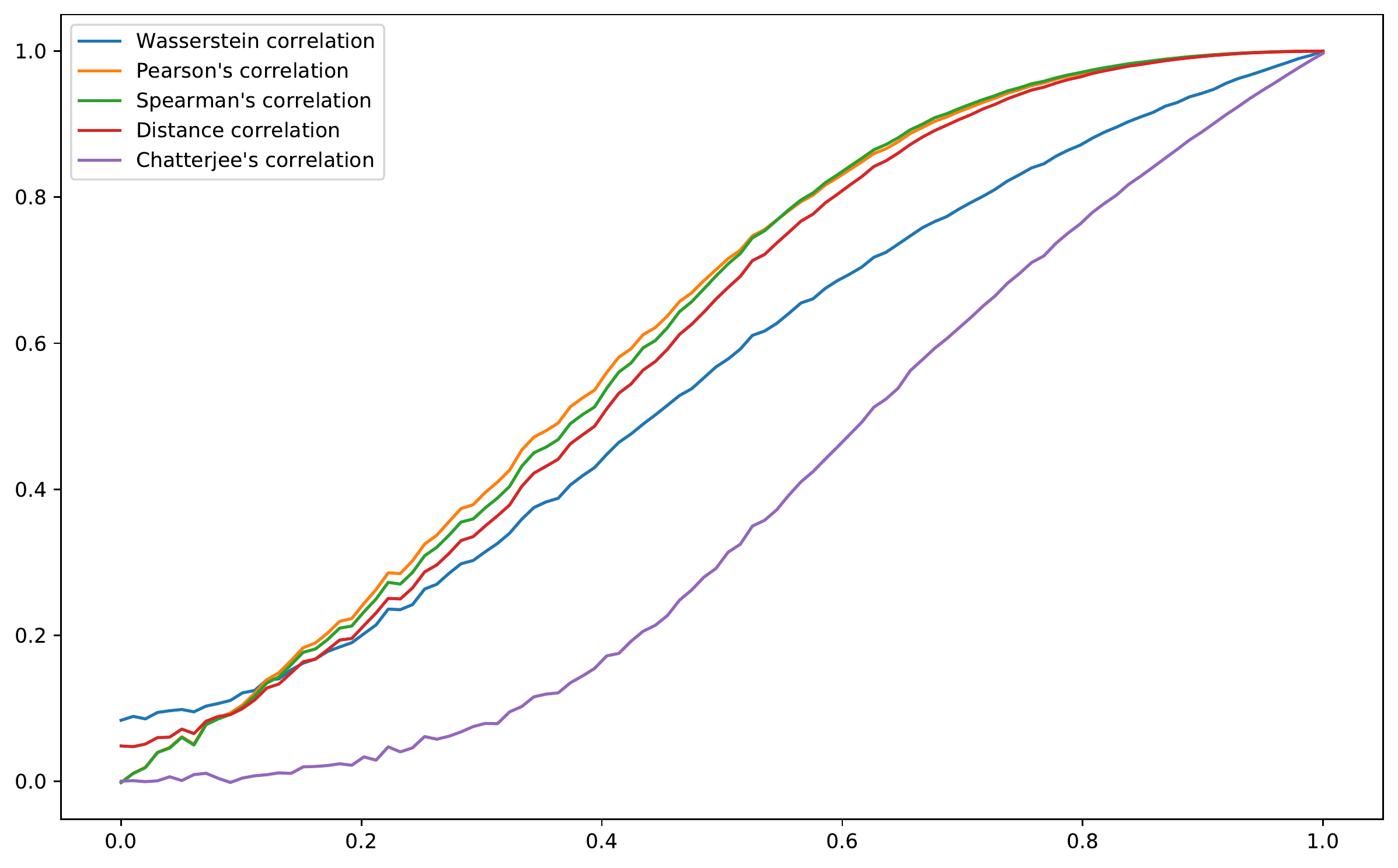}
\end{minipage}
\quad
\begin{minipage}{0.48\textwidth}
\includegraphics[scale=0.25]{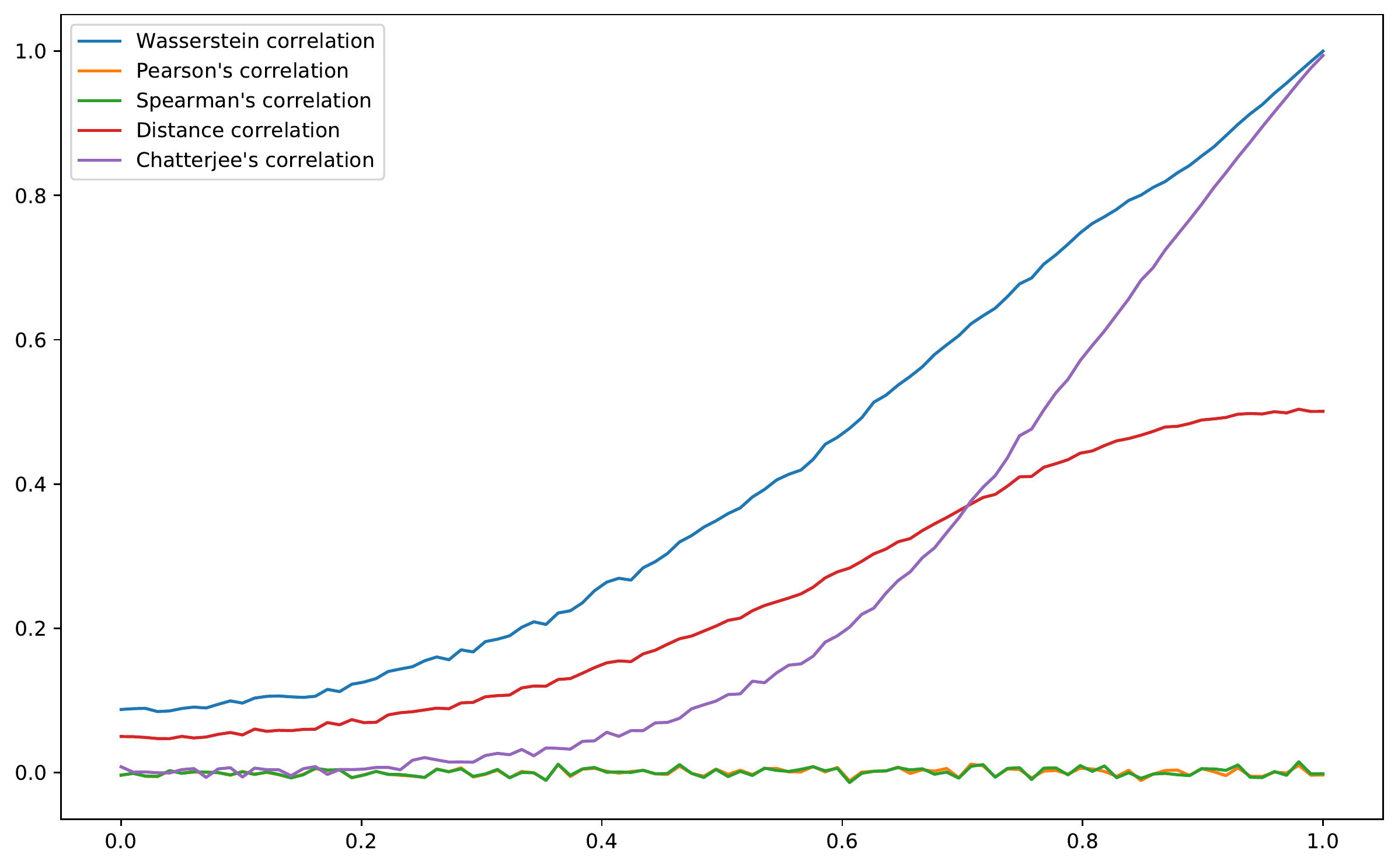}
\end{minipage}
\caption{Comparison of correlation coefficients as a function of $\rho$ for $\rho$-correlated bivariate uniform distribution with $f(x)=x$ (left) and $f(x)=|x-0.5|$ (right). Both plots use $N=1000$ samples and show the average over $30$ different draws.}\label{fig:2}
\end{center}
\end{figure}

\begin{figure}[h!]
\begin{center}
\begin{minipage}{0.48\textwidth}
\includegraphics[scale=0.25]{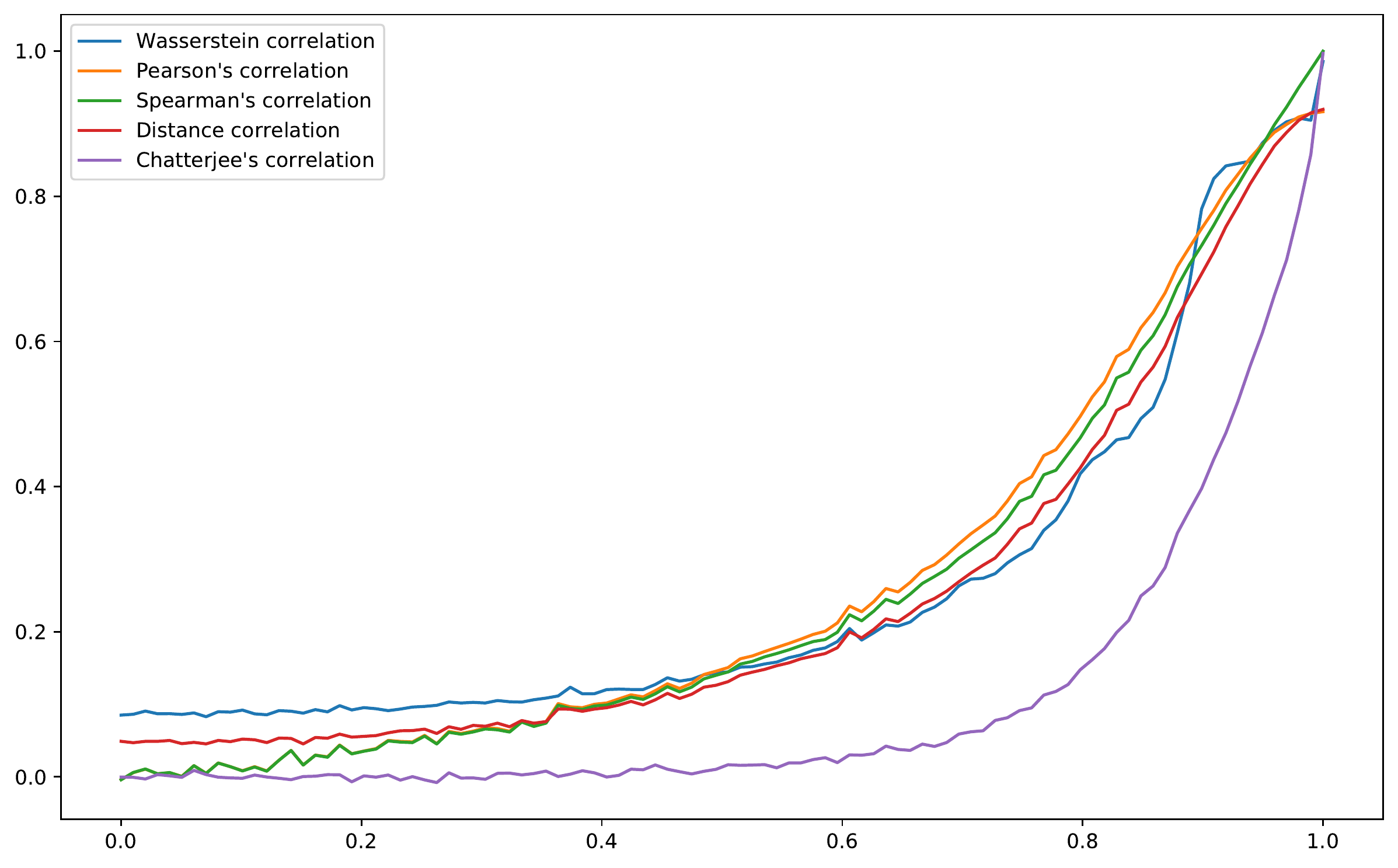}
\end{minipage}
\quad
\begin{minipage}{0.48\textwidth}
\includegraphics[scale=0.25]{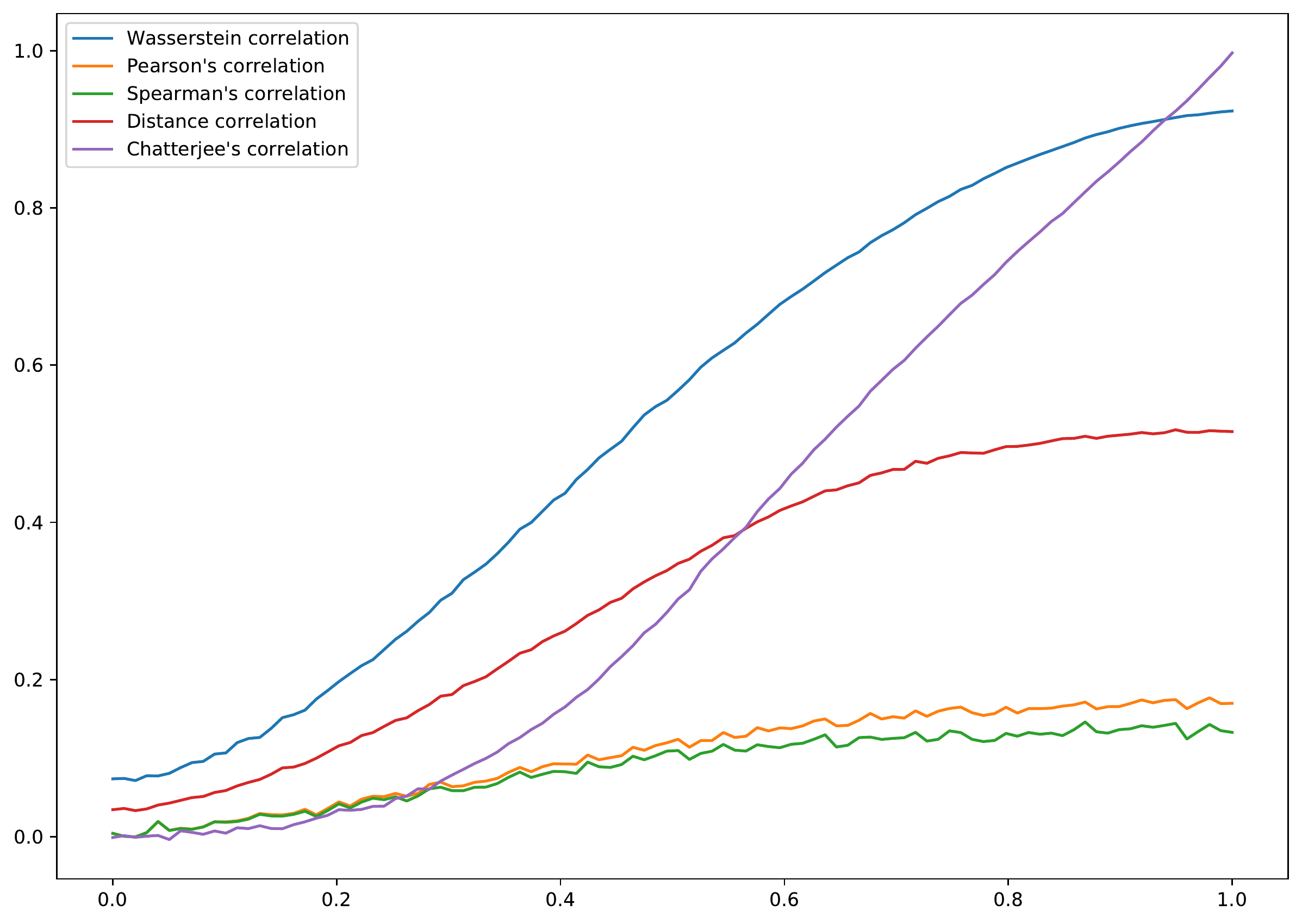}
\end{minipage}
\caption{Comparison of correlation coefficients as a function of $\rho$ for $\rho$-correlated bivariate uniform distribution with $f(x)=(x-0.5)^3$ (left) and $f(x)=\sin(3x)$ (right). Both plots use $N=1000$ samples and show the average over $30$ different draws.}\label{fig:3}
\end{center}
\end{figure}

While the traditional coefficients perform slightly better for near-linear relationships between $X_1$ and $X_2$, the strength of the Wasserstein correlation becomes evident in the non-linear setting: for the functions $f(x)=|x-0.5|$ and $f(x)=\sin(3x)$, where Pearson and Spearman's correlation do not pick up any functional relationship, the Wasserstein correlation increases faster than distance correlation and Chatterjee's correlation.\\

Next we discuss an application to historical data: we compare our estimate for the Wasserstein correlation with other correlation coefficients on Sir Francis Galton's peas data. The estimator $T^C$ was already computed in \cite[Section 3]{chatterjee2020new} and we refer to the exposition there for a historical discussion. The data consists of 700 pairs $(X_1, X_2)$ of mean diameters of sweet peas of mother plants and daughter plants. We list the results in Figure \ref{fig:4}.

\begin{figure}[h!]
\centering
\begin{tabular}{cc}
Correlation coefficient & Value  \\ 
  \hline
 Pearson's correlation & 0.3463  \\ 
 Spearman's correlation & 0.3615\\
 Distance correlation & 0.3216\\
 Chatterjee's correlation & 0.1186 \\
 Wasserstein correlation & 0.3124  
\end{tabular}
\qquad\qquad
\begin{tabular}{cc}
Correlation coefficient & Value  \\ 
  \hline
 Pearson's correlation & 0.3463  \\ 
 Spearman's correlation & 0.3615 \\
 Distance correlation & 0.3216\\
 Chatterjee's correlation & 0.9224 \\
 Wasserstein correlation & 0.9409  
\end{tabular}
\caption{Correlation between $X_1$ and $X_2$ (left) and $X_2$ and $X_1$ (right) for different correlation coefficients}\label{fig:4}
\end{figure}

We observe that the Wasserstein correlation between $X_1$ and $X_2$ is only slightly lower than classical coefficients like Pearson's or Spearman's correlation, while Chatterjee's correlation with a value of 0.12 (averaged over $10^5$ samples) is quite low. When reversing the roles of $X_1$ and $X_2$, both the Wasserstein correlation and Chatterjee's correlation increase decisively, hinting at the deterministic relationship $X_1=f(X_2)$ for some function $f$. As in \cite{chatterjee2020new}, this can be made plausible by the specific structure of the data.\\

Now we consider the case of independent marginals $\pi=\mu\otimes \nu$ of Section \ref{sec:4} and compare the power functions of four different tests for independence: these are derived from our Corollary \ref{cor:go}, \cite[Theorem 2.1]{chatterjee2020new}, \cite[Theorem 6]{szekely2007measuring} and \cite[Theorem 3.1]{shi2020distribution} respectively. We estimate the power function on 1500 samples over 200 draws. We remind the reader however that all four tests only have a theoretical foundation for the asymptotic regime $N\to \infty$. Despite our best efforts we have not been able to resolve the numerical instabilities occuring in the power of Chatterjee's test using the Xicor package.

\begin{figure}[h!]
\begin{center}
\begin{minipage}{0.48\textwidth}
\includegraphics[scale=0.27]{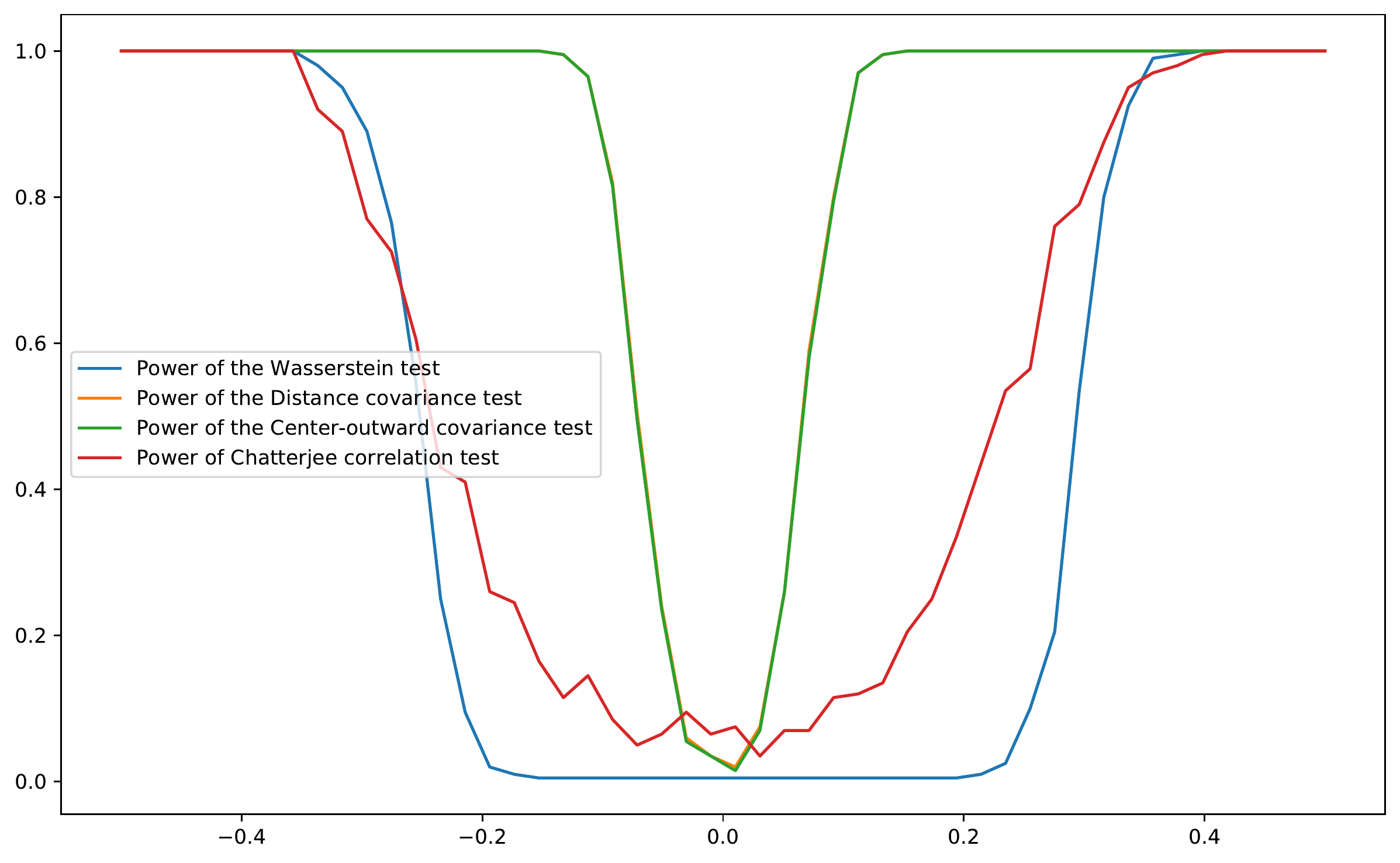}
\end{minipage}
\quad
\begin{minipage}{0.48\textwidth}
\includegraphics[scale=0.27]{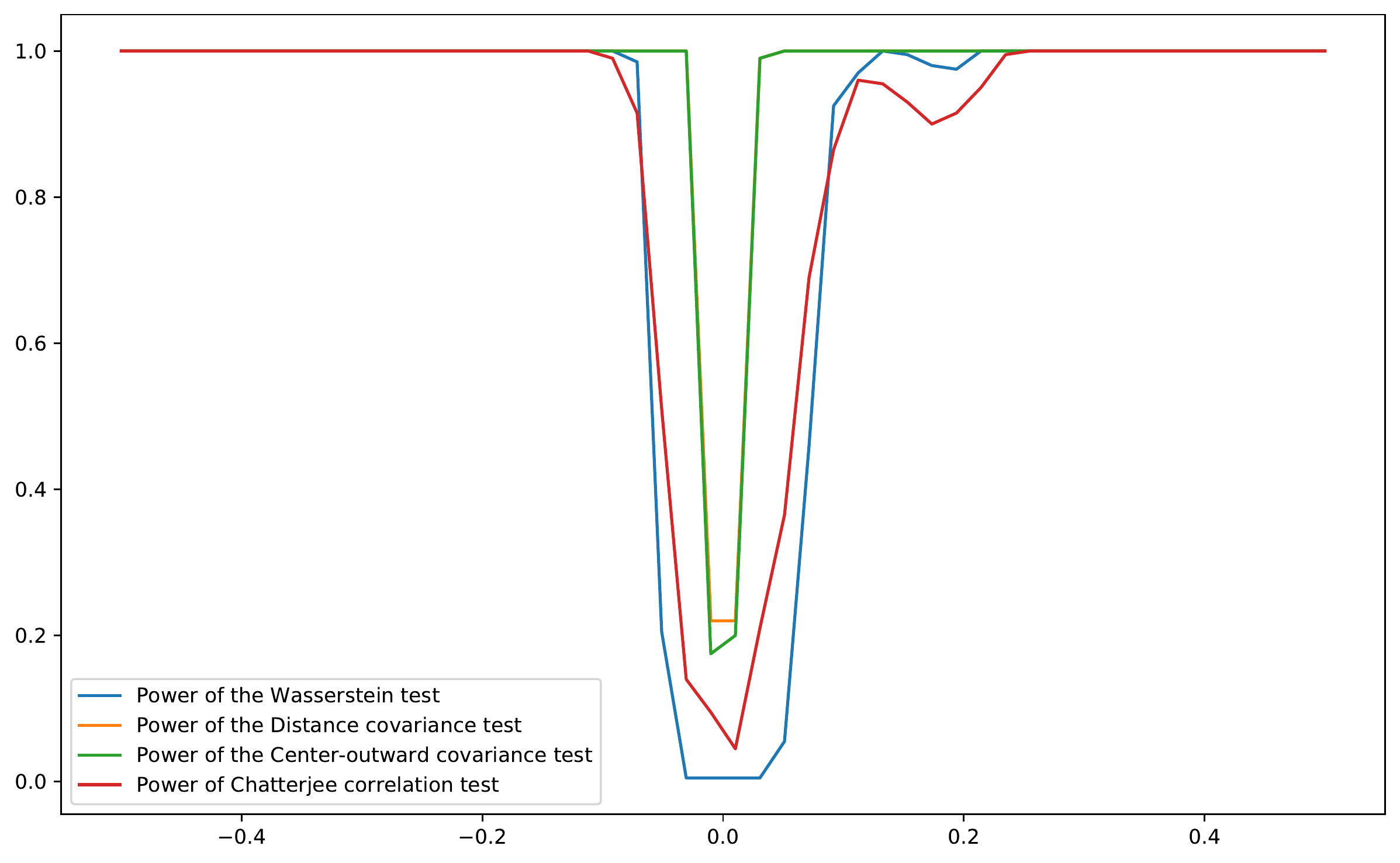}
\end{minipage}
\caption{Comparison of power functions for different tests of independence: $X_2=\rho X_1 +\sqrt{1-\rho^2} U$ for an independent uniform random variable $U$ (left) and  $\log(X^2_1)$ vs $\log(X_2^2)$ (right) for $1500$ samples and $200$ draws.}\label{fig:5}
\end{center}
\end{figure}
In Figure \ref{fig:5} we again plot the corresponding power functions for the uniform distribution on the unit interval $[0,1]$ with the correlated random variable $X_2=\rho X_1 +\sqrt{1-\rho^2} U$ for an independent uniform random variable $U$. We also consider the transform $\log(X^2_1)$ vs $\log(X_2^2)$.
In both cases, the tests discussed in \cite[Theorem 6]{szekely2007measuring} and \cite[Theorem 3.1]{shi2020distribution} seem to yield slightly superior behaviour of the corresponding power function, while our results as well as \cite[Theorem 2.1]{chatterjee2020new} exhibit a less spiked shape.

\section{Outlook}\label{sec:8}

While the theoretical properties of the Wassserstein correlation established in Sections \ref{sec:2} and \ref{sec:3} hold in great generality, the development of an estimation procecdure for $\overrightarrow{\mathcal{W}}$ on general Polish spaces is still an open problem. In this paper we have exhibited a viable estimation approach for compactly supported probability measures, leveraging results from \cite{backhoff2020estimating}. It is however a non-trivial task to extend their framework to general Polish spaces and show consistency of the adapted empirical measure. We leave this extension to future work.\\
In a similar vein, the test of independence derived in Section \ref{sec:4} relies on non-optimal convergence rates even for compactly supported probability measures $\pi$ (for a numerical example see Appendix \cite{jwb}[Section 2]). We believe that these can be improved once a deeper understanding of the exact distribution of $\overrightarrow{\mathcal{W}}_N$ in the asymptotic regime is available. We remark however that this a difficult task: obtaining convergence rates even for the (non-adapted) Wasserstein distance is still an active research field and recent works often rely on intricate probabilistic estimates (see e.g. \cite{fournier2015rate,sommerfeld2016inference}).
 
\section{Acknowledgements}
The author would like to thank the Bodhi Sen and Giovanni Puccetti for helpful discussions. Furthermore the author would like to thank the anonymous referees, an Associate Editor and the Editor for their constructive comments that improved the quality of this paper.

\bibliographystyle{plainnat}
\bibliography{bib}

\begin{appendix}
\section{Remaining proofs}

\begin{proof}[Proof of Theorem \ref{thm:easy}]
\begin{enumerate}[(i)]
\item Clearly $\overrightarrow{\mathcal{W}}(\pi)\ge 0$. Furthermore, replacing the $\mathcal{W}(\pi_{x_1},\nu)$-optimal coupling in the numerator of $\overrightarrow{\mathcal{W}}(\pi)$ by the the product coupling $\pi_{x_1}\otimes \nu$ we obtain the upper bound
\begin{align}\label{eq:upper}
\begin{split}
\int \mathcal{W}(\pi_{x_1},\nu)\,\mu(dx_1)&\le \int \int d(y,z) \,\pi_{x_1}(dy)\nu(dz)\mu(dx_1)\\
&=\int d(y,z)\,\nu(dy)\nu(dz).
\end{split}
\end{align}
\item If $\overrightarrow{\mathcal{W}}(\pi)=0$ then $\mathcal{W}(\pi_{x_1}, \nu)=0$ $\mu$-a.s. and thus $\pi_{x_1}=\nu$ $\mu$-a.s. by positive definiteness of the Wasserstein distance. In particular 
\begin{align}\label{eq:product}
\pi(A\times B) =\int_A \pi_{x_1}(B) \,\mu(dx_1) =\int_A \nu(B)\mu(dx_1)= (\mu\otimes\nu)(A\times B)
\end{align}
for any Borel subsets $A,B\subseteq \mathcal{X}$ and thus $\pi=\mu\otimes\nu$. On the other hand, if $\pi=\mu\otimes\nu$, then using again \eqref{eq:product} we conclude that $\pi_{x_1}=\nu$ $\mu$-a.s. by $\mu$-a.s. uniqueness of disintegrations. Thus $\mathcal{W}(\pi_{x_1}, \nu)=0$ $\mu$-a.s., which in turn implies $\overrightarrow{\mathcal{W}}(\pi)=0$. This shows the claim.
\item Note that cyclical monotonicity of optimal transport for the cost function $c(x,y)=d(x,y)$ (see e.g. \cite[Def. 5.1]{villani2008optimal}) implies that inequality \eqref{eq:upper} is strict unless $\pi_{x_1}=\delta_{f(x_1)}$ for some function $f:\mathcal{X}\to \mathcal{X}$: indeed, consider the product coupling $\pi_{x_1}\otimes \nu$ and define the set
\begin{align*}
A:= \left\{x_1\in \mathcal{X}:\ \exists\, y_1, \tilde{y}_1 \in \text{supp}(\pi_{x_1}),\ y_1\neq \tilde{y}_1\right\}.
\end{align*}
Let us assume towards a contradiction that $\mu(A)>0$. By the definition of the disintegration $x_1\mapsto \pi_{x_1}$ and tightness of probability measures we then obtain
\begin{align*}
\mu\left(\left\{ x_1\in \mathcal{X}:\ \exists\, y_1, \tilde{y}_1 \in \text{supp}(\pi_{x_1})\cap \text{supp}(\nu),\ y_1\neq \tilde{y}_1\right\}\right)>0.
\end{align*}
Next, by the definition of the product coupling $\pi_{x_1}\otimes \nu$ we have that
\begin{align*}
\mu( \left\{x_1\in \mathcal{X}:\ \exists (y_1,\tilde{y}_1), (\tilde{y}_1, y_1)\in \text{supp}(\pi_{x_1}\otimes \nu),\ y_1\neq \tilde{y}_1\right\})>0.
\end{align*}
Now we note that
\begin{align*}
d(y_1, \tilde{y}_1)+d(\tilde{y}_1, y_1)>d(y_1, y_1)+d(\tilde{y}_1, \tilde{y}_1)=0,
\end{align*}
so that $$\mu\left( \left\{ x_1\in \text{X}:\ \text{supp}(\pi_{x_1}\otimes \nu)\text{ is not cyclically monotone} \right\}\right)>0,$$ a contradiction. On the other hand for $\pi_{x_1}=\delta_{f(x_1)}$ we have
\begin{align*}
\int \mathcal{W}(\pi_{x_1},\nu)\,\mu(dx_1)&=\int d(f(x_1),z)\,\mu(dx_1)\,\nu(dz)\\
&=\int d(y,z)\,\nu(dy)\,\nu(dz).
\end{align*}
This concludes the proof.
\end{enumerate}
\end{proof}

\begin{proof}[Proof of Lemma \ref{lem:chatterjee}]
According to \cite[Theorem 1.1]{chatterjee2020new} we have
\begin{align*}
T^C(\pi)&= \frac{\int \text{Var}\left(\E\left[\mathds{1}_{\{Y \ge y\}} \mid X\right]\right)\,\nu(dy) }{\int \text{Var}\left( \mathds{1}_{\{Y\ge y\}}\right)\,\nu(dy)}\\
&= \frac{\int \int \left( \pi_{x_1}[y,\infty)-\int \pi_{x_1}[y,\infty)\,\mu(dx_1) \right)^2\,\mu(dx_1)\,\nu(dy)  }{\int \text{Var}\left( \mathds{1}_{\{Y\ge y\}}\right)\,\nu(dy)}\\
&= \frac{\int \int \left( \pi_{x_1}[y,\infty)-\nu[y,\infty) \right)^2\,\mu(dx_1)\,\nu(dy)  }{\int \text{Var}\left( \mathds{1}_{\{Y\ge y\}}\right)\,\nu(dy)}.
\end{align*}
In particular we now note that by Fubini's theorem
\begin{align*}
\int \int \left( \pi_{x_1}[y,\infty)-\nu[y,\infty) \right)^2\,\mu(dx_1)\,\nu(dy) &=\int \int \left( \pi_{x_1}[y,\infty)-\nu[y,\infty) \right)^2\,\nu(dy)\,\mu(dx_1)\\
&= \int \int \left( F_{\pi_{x_1}}(y)-F_{\nu}(y) \right)^2\,\nu(dy)\,\mu(dx_1)\\
&= \int \int_{[0,1]} \left( F_{\pi_{x_1}}(F^{-1}_\nu(y))-F_{\nu}(F^{-1}_\nu(y)) \right)^2\,dy\,\mu(dx_1)\\
&= \int \int_{[0,1]} \left( F_{\pi_{x_1}}(F^{-1}_\nu(y))-y \right)^2\,dy\,\mu(dx_1)\\
&\le \int \int_{[0,1]} \left| F_{\pi_{x_1}}(F_\nu^{-1}(y))-y \right| \,dy\,\mu(dx_1)\\
&=\int \mathcal{W}(\tilde{\pi}_{x_1}, \mathcal{U}([0,1]))\,\mu(dx_1)
\end{align*}
for $\tilde{\pi}_{x_1}:=(F_\nu)_{\#}\pi_{x_1}$ and $\mathcal{U}([0,1])$ is the uniform distribution on $[0,1]$.
On the other hand we also obtain from the above that
\begin{align*}
\left( \int \int \left( \pi_{x_1}[y,\infty)-\nu[y,\infty) \right)^2\,\mu(dx_1)\,\nu(dy) \right)^{1/2} &\ge \int \int_{[0,1]} \left| F_{\mu_{x_1}}(F_\nu^{-1}(y))-y \right| \,dy\,\mu(dx_1) \\
&= \int \mathcal{W}(\tilde{\pi}_{x_1}, \mathcal{U}([0,1]))\,\mu(dx_1).
\end{align*}
This concludes the proof.
\end{proof}

\begin{proof}[Proof of Lemma \ref{rem:1}]
Choosing $\gamma^{x_1}\in \Pi(\nu,\pi_{x_1})$ such that
\begin{align*}
\mathcal{W}(\nu,\pi_{x_1}) =\int |y-z|_2\,\gamma^{x_1}(dy, dz)
\end{align*}
for each $x_1\in \mathcal{X}$,
it is not hard to see that
\begin{align*}
T^{DGS}(\pi)&= \frac{ \int (|y-z|_2 -|\tilde{y}-\tilde{z}|)\,\gamma^{x_1}(dy,d\tilde{y})\,\gamma^{x_1}(dz, d\tilde{z}) \,\mu(dx_1)}{\int |y-z|_2 \,\nu(dy)\,\nu(dz)}\\
&\le \frac{ \int |y -z-(\tilde{y}-\tilde{z})|_2\,\gamma^{x_1}(dy,d\tilde{y})\,\gamma^{x_1}(dz, d\tilde{z}) \,\mu(dx_1)}{\int |y-z|_2 \,\nu(dy)\,\nu(dz)}\\
&\le \frac{ \int (|y-\tilde{y}|_2+|z -\tilde{z}|_2)\,\gamma^{x_1}(dy,d\tilde{y})\,\gamma^{x_1}(dz, d\tilde{z}) \,\mu(dx_1)}{\int |y-z|_2 \,\nu(dy)\,\nu(dz)}\\
&= \frac{ \int |y-\tilde{y}|_2\,\gamma^{x_1}(dy,d\tilde{y})\,\mu(dx_1)+\int |z -\tilde{z}|_2\,\gamma^{x_1}(dz, d\tilde{z}) \,\mu(dx_1)}{\int |y-z|_2 \,\nu(dy)\,\nu(dz)}\\
&= \frac{2\int \mathcal{W}(\pi_{x_1},\nu)\,\mu(dx_1)}{\int |y-z|_2 \,\nu(dy)\,\nu(dz)}=2\overrightarrow{\mathcal{W}}(\pi).
\end{align*}
In conclusion, in the case $(\mathcal{X},d)=(\R^d, |\cdot|_2)$, the functional $T^{DGS}(\pi)$ is dominated by $2\overrightarrow{\mathcal{W}}(\pi)$.
\end{proof}

\begin{proof}[Proof of Lemma \ref{lem:hell}]
Recall from \cite[Section 4]{geenens2020hellinger} that
\begin{align*}
T^H(\pi)=\int \int \left( \sqrt{ f_\pi(x_1, x_2)}-\sqrt{ f_\mu(x_1) f_\nu(x_2)} \right)^2\,dx_1\,dx_2.
\end{align*}
If $\mu,\nu$ have bounded support, then $\pi\in \Pi(\mu,\nu)$ also has bounded support and standard results on comparing metrics on $\text{Prob}(\R)$ (see e.g. \cite{gibbs2002choosing}) imply that there exists a constant $C>0$ such that
\begin{align*}
\mathcal{W}(\pi_{x_1},\nu)\le C \int  \left( \sqrt{ f_{\pi_{x_1}}(x_2)}-\sqrt{ f_\nu(x_2)} \right)^2\,dx_2.
\end{align*}
In particular 
\begin{align*}
\int \mathcal{W}(\pi_{x_1},\nu)f_{\mu}(x_1)\,dx_1 &\le C \int \int  \left( \sqrt{ f_{\pi_{x_1}}(x_2)}-\sqrt{ f_\nu(x_2)} \right)^2\,dx_2 \, f_{\mu}(x_1)\,dx_1\\
&\le C \int \int  \left( \sqrt{ f_{\mu}(x_1) f_{\pi_{x_1}}(x_2)}-\sqrt{ f_{\mu}(x_1) f_\nu(x_2)} \right)^2\,dx_2 \,dx_1\\
&=C \int \int  \left( \sqrt{ f_{\pi}(x_1,x_2)}-\sqrt{ f_{\mu}(x_1) f_\nu(x_2)} \right)^2\,dx_2 \,dx_1=CT^H(\pi)
\end{align*}
by Tonelli's theorem and the definition of the conditional density $ f_{\pi_{x_1}}(x_2)$.
\end{proof}

\begin{proof}[Proof of Lemma \ref{lem:pearson}]
Note that we can immediately read off the marginal distributions $\mu=\mathcal{N}(a_1, \sigma_1^2)$ and $\nu=\mathcal{N}(a_2, \sigma_2^2),$ as well as $$\pi_{x_1}=\mathcal{N}\left(a_2+\frac{\sigma_2}{\sigma_1} \rho (x_1-a_1), (1-\rho^2)\sigma_2^2\right).$$
Furthermore, by the explicit formula for the $2$-Wasserstein distance between Gaussians (see e.g. \cite[Simple example]{knott1984optimal}) one can compute
\begin{align*}
\mathcal{W}_2(\pi_{x_1}, \nu)^2&= \left(a_2+\frac{\sigma_2}{\sigma_1} \rho (x_1-a_1) -a_2\right)^2 +\sigma_2^2+(1-\rho^2)\sigma_2^2-2 \sqrt{(1-\rho^2)\sigma_2^4}\\
&=\left( \frac{\sigma_2}{\sigma_1} \rho (x_1-a_1) \right)^2 +\sigma_2^2+(1-\rho^2)\sigma_2^2-2\sigma^2_2 \sqrt{1-\rho^2},
\end{align*}
so that
\begin{align*}
\int \mathcal{W}_2(\pi_{x_1}, \nu)^2 \,\mu(dx_1)&= \rho^2\sigma_2^2 +\sigma_2^2+(1-\rho^2)\sigma_2^2-2\sigma_2^2 \sqrt{1-\rho^2}\\
&=2\sigma_2^2 \left(1-\sqrt{1-\rho^2}\right).
\end{align*}
Lastly
\begin{align*}
\int |y-z|^2 \,\nu(dy)\nu(dz)= 2\int |y|^2\,\nu(dy)-2\left(\int|z|\,\nu(dz)\right)^2=2\sigma_2^2
\end{align*}
and the claim follows.
\end{proof}

\begin{proof}[Proof of Theorem \ref{thm:triangle}]
Fix $\delta>0$ and take $\gamma\in \Pi(\mu,\tilde{\mu})$ such that
\begin{align}\label{eq:defn}
\int\left[ d(x_1,y_1)+ \mathcal{W}(\pi_{x_1}, \tilde{\pi}_{y_1})\right] \, \gamma(dx_1,dy_1)\le \mathcal{AW}(\pi,\tilde{\pi})+\delta.
\end{align}
A repeated application of the triangle inequality now yields
\begin{align*}
&\left| \int \mathcal{W}(\pi_{x_1}, \nu)\,\mu (dx_1) - \int \mathcal{W}(\tilde{\pi}_{y_1}, \tilde{\nu})\,\tilde{\mu}(dy_1) \right| \\
&=\left| \int \mathcal{W}(\pi_{x_1}, \nu)\,\gamma (dx_1,dy_1) - \int \mathcal{W}(\tilde{\pi}_{y_1}, \tilde{\nu})\,\gamma(dx_1,dy_1) \right| \\
&\le \int |\mathcal{W}(\pi_{x_1},\nu)-\mathcal{W}(\tilde{\pi}_{y_1}, \tilde{\nu})|\,\gamma(dx_1,dy_1)\\
&\le  \int \left[|\mathcal{W}(\pi_{x_1},\nu)-\mathcal{W}(\tilde{\pi}_{y_1},\nu)|+|\mathcal{W}(\tilde{\pi}_{y_1},\nu)-\mathcal{W}(\tilde{\pi}_{y_1}, \tilde{\nu})|\right]\,\gamma(dx_1,dy_1)\\
&\le  \int \left[\mathcal{W}(\pi_{x_1},\tilde{\pi}_{y_1})+\mathcal{W}(\nu, \tilde{\nu})\right]\,\gamma(dx_1,dy_1)\\
&\le \int \mathcal{W}(\pi_{x_1},\tilde{\pi}_{y_1})\, \gamma(dx_1,dy_1)+\int \mathcal{W} (\nu, \tilde{\nu})\,\gamma(dx_1,dy_1)\\
&\le  \mathcal{AW}(\pi, \tilde{\pi})+\delta+\mathcal{W}(\nu,\tilde{\nu}),
\end{align*}
where the last inequality follows from the particular choice of $\gamma$ in \eqref{eq:defn}.
As $\delta>0$ was arbitrary and noting that
\begin{align}\label{eq:ineq2}
\mathcal{W}(\nu,\tilde{\nu})\le \mathcal{W}(\pi,\tilde{\pi})\le \mathcal{AW}(\pi,\tilde{\pi}),
\end{align}
we conclude that
\begin{align*}
\left| \int \mathcal{W}(\pi_{x_1}, \nu)\,\mu(dx_1) - \int \mathcal{W}(\tilde{\pi}_{y_1}, \tilde{\nu})\,\tilde{\mu}(dy_1) \right|&\le \mathcal{AW}(\pi, \tilde{\pi})+\mathcal{W}(\nu,\tilde{\nu})\\
&\le 2\mathcal{AW}(\pi, \tilde{\pi}),
\end{align*}
which shows the first claim. Let us  define 
\begin{align*}
f(\nu,\tilde{\nu}):=\int  d(y,z)\,\nu(dy)\,\nu(dz) \cdot \int  d(y,z)\,\tilde{\nu}(dy)\,\tilde{\nu}(dz)
\end{align*}
and recall 
\begin{align*}
f(\tilde{\nu})= \int  d(y,z)\,\tilde{\nu}(dy)\,\tilde{\nu}(dz).
\end{align*}
The second claim now follows from writing
\begin{align*}
\left| \overrightarrow{\mathcal{W}}(\pi)-\overrightarrow{\mathcal{W}}(\tilde{\pi}) \right|&=\frac{1}{f(\nu,\tilde{\nu})} \Bigg| \int \mathcal{W}(\pi_{x_1}, \nu)\,\mu(dx_1) \int d(y,z)\,\tilde{\nu}(dy)\tilde{\nu}(dz) \\
&\qquad\qquad- \int \mathcal{W}(\tilde{\pi}_{y_1}, \tilde{\nu})\,\tilde{\mu}(dy_1)  \int d(y,z)\,\nu(dy)\,\nu(dz) \Bigg|\\
&\le \frac{1}{f(\nu,\tilde{\nu})} \Bigg[\Bigg|\int d(y,z)\,\tilde{\nu}(dy)\tilde{\nu}(dz)-\int d(y,z)\,\nu(dy)\,\nu(dz) \Bigg|\\
&\quad \cdot \int \mathcal{W}(\pi_{x_1}, \nu)\,\mu(dx_1)\\
&\quad + \Bigg| \int \mathcal{W}(\pi_{x_1}, \nu)\,\mu(dx_1) -\int \mathcal{W}(\tilde{\pi}_{y_1}, \tilde{\nu})\,\tilde{\mu}(dy_1) \Bigg|\\
&\quad \cdot \int d(y,z)\,\nu(dy)\,\nu(dz) \Bigg]\\
&\le \frac{1}{f(\tilde{\nu})}  \left[ g(\nu,\tilde{\nu})+\mathcal{AW}(\pi, \tilde{\pi})+\mathcal{W}(\nu,\tilde{\nu}) \right]\\
&\le \frac{1 }{f(\tilde{\nu})}  \left[g(\nu,\tilde{\nu})+ 2\mathcal{AW}(\pi, \tilde{\pi})\right]
\end{align*}
for any $x_0\in \mathcal{X}$, where we have used the estimate $\int \mathcal{W}(\pi_{x_1}, \nu)\,\mu(dx_1)\le \int d(y,z)\,\nu(dy)\,\nu(dz)$ and the definition of $f$ in the second inequality. Now let $\gamma\in \Pi(\tilde{\nu}, \nu)$ be an $\mathcal{W}$-optimal coupling between $\tilde{\nu}$ and $\nu$. Using again the triangle inequality we then conclude that
\begin{align*}
 g(\nu,\tilde{\nu})&= \left|\int d(y, z) \,\tilde{\nu}(dy)\, \tilde{\nu}(dz) - \int d(y, z)\, \nu(dy) \,\nu(dz)  \right|\\
 &\le \left|\int d(y, z) \,\tilde{\nu}(dy)\, \tilde{\nu}(dz) -\int d(y, \tilde{z})\, \tilde{\nu}(dy)\,\nu(d\tilde{z}) \right| \\
 &\quad +\left| \int d(\tilde{y}, z) \,\tilde{\nu}(d\tilde{y}) \,\nu(dz) - \int d(y, z)\, \nu(dy) \,\nu(dz)  \right| \\
 &\le \int   |d(y, z)-d(y, \tilde{z})| \,\tilde{\nu}(dy)\, \gamma(dz,d\tilde{z})\\
 &\quad +\int   |d(\tilde{y}, z)-d(y, z)| \,\nu(dz)\, \gamma(d\tilde{y},dy)\\
 &\le \int   d(z,\tilde{z}) \,\tilde{\nu}(dy)\, \gamma(dz,d\tilde{z})+ \int   d(\tilde{y},y) \,\tilde{\nu}(dz)\, \gamma(d\tilde{y},dy)\\
 &= 2 \mathcal{W}(\nu, \tilde{\nu})\le 2\mathcal{AW}(\pi, \tilde{\pi}).
\end{align*}

This concludes the proof.
\end{proof}

\begin{proof}[Proof of Lemma \ref{thm:almost.sure.convergence}]
The proof follows from the same arguments as in \cite[Proof of Theorem 1.3]{backhoff2020estimating} with a few minor changes. We first remark that it is enough to show the claim for $\pi$ with  continuous disintegration $x_1\mapsto \pi_{x_1}$. Indeed, the general case then follows exactly as in \cite[Proof of Theorem 1.3]{backhoff2020estimating}.\\
We now note that \cite[Proof of Lemma 3.4]{backhoff2020estimating} states explicitly that
\begin{align*}
E\left[\mathcal{W}\left(\pi_{1}, \widehat{\pi}_{1}^{N}\right)\right] \leq C R(N), 
\end{align*}
where the function $R$ is defined as
\begin{align*}
R:[0,+\infty) \rightarrow[0,+\infty], \quad R(u):=\left\{\begin{array}{ll}
u^{-1 / 2} & \text { if } d=1 \\
u^{-1 / 2} \log (u+3) & \text { if } d=2 \\
u^{-1 / d} & \text { if } d \geq 3.
\end{array}\right.
\end{align*}
Furthermore \cite[Proof of Lemma 3.4]{backhoff2020estimating} also states that
\begin{align*}
E\left[\sum_{G \in \Phi_{t}^{N}} \widehat{\pi}^{N}(G) \mathcal{W}\left(\mu_{G}, \widehat{\mu}_{G}^{N}\right) \mid \mathcal{G}_{t}^{N}\right]\leq R\left(\frac{N}{\left|\Phi^{N}\right|}\right),
\end{align*}
so that we can conclude as in \cite[Proof of Lemma 5.3]{backhoff2020estimating} that
\begin{align*}
\mathcal{A} \mathcal{W}\left(\pi, \picausal\right) \leq \delta+ C(\delta) \left( \Delta^N+ R\left(\frac{N}{\left|\Phi^{N}\right|}\right)\right)
\end{align*}
for all $N\in \N$ large enough,
where
\begin{align*}
\Delta^{N}&:=\sum_{f, N} \Delta_{ G}^{N}, \\
\Delta_{G}^{N}&:=\widehat{\mu}^{N}(G)\left(\mathcal{W}\left(\pi_{G}, \widehat{\pi}_{G}^{N}\right)-E\left[\mathcal{W}\left(\pi_{G},\widehat{\pi}_{G}^{N}\right) \mid \mathcal{G}_{t}^{N}\right]\right).
\end{align*}
We can now follow the arguments in  \cite[Proof of Theorem 5.3]{backhoff2020estimating}, noting that $$\lim_{N\to \infty}R\left(\frac{N}{\left|\Phi^{N}\right|}\right)=0$$ as $\lim_{N\to \infty} |\Phi^N|/N=0$ by assumption. This concludes the proof.
\end{proof}

\begin{proof}[Proof of Theorem \ref{thm:test1}]
We first bound the Wasserstein distance $\mathcal{W}(\picausal[G], \picausal[]_2)$ from above by quantities, whose distributions are easier to control. This goes back to a classical argument, see e.g. \cite[Lemma 5]{fournier2015rate}, or also \cite[Appendix A]{weed2017sharp} for a detailed discussion. In our specific case we can write

\begin{align}\label{eq:estimator}
\overrightarrow{\mathcal{W}}(\picausal)=\frac{\sum_{G\in \Phi^N} \frac{|n\in \{1, \dots, N\} \text{ s.t. } X_1^n\in G|}{N}\, \mathcal{W}( \picausal[G], \picausal[]_2) } {\frac{1}{N^2}\sum_{n,m=1}^N |\varphi^N(X_2^n)-\varphi^N(X_2^m)|}
\end{align}

and use the fact that both  $\picausal[G]$ and $\picausal[]_2$ are finitely supported on $\varphi^N([0,1]^d)$.  Together with the observation that $\text{diam}([0,1]^d)=\sqrt{d}$ we can thus bound the Wasserstein distance in \eqref{eq:estimator} from above as follows:

\begin{align*}
\mathcal{W}( \picausal[G], \picausal[]_2)&\le \sqrt{d} \sum_{H\in\Phi^N} |\picausal[G](H)- \picausal[]_2(H)|\\
&\le  \sqrt{d} \sum_{H\in\Phi^N} \Big|\frac{|n\in \{1,\dots, N\} \text{ s.t. } X_1^n\in G, X_2^n\in H|}{|n\in \{1, \dots, N\} \text{ s.t. } X_1^n\in G|}\\
&\qquad\qquad\qquad\qquad\qquad- \frac{|n\in \{1,\dots, N\} \text{ s.t. } X_2^n\in H|}{N}\Big|,
\end{align*}

so that

\begin{align*}
&\sum_{G\in \Phi^N} \frac{|n\in \{1, \dots, N\} \text{ s.t. } X_1^n\in G|}{N}\, \mathcal{W}( \picausal[G], \picausal[]_2)\\
&\le 
\sqrt{d}
\sum_{G\in \Phi^N}\frac{|n\in \{1, \dots, N\} \text{ s.t. } X_1^n\in G|}{N}  \\
&\quad\cdot  \sum_{H\in\Phi^N} \left|\frac{|n\in \{1,\dots, N\} \text{ s.t. } X_1^n\in G, X_2^n\in H|}{|n\in \{1, \dots, N\} \text{ s.t. } X_1^n\in G|}- \frac{|n\in \{1,\dots, N\} \text{ s.t. } X_2^n\in H|}{N}\right|\\
&=\sqrt{d}
\sum_{G\in \Phi^N} \sum_{H\in\Phi^N} \Bigg|\frac{|n\in \{1,\dots, N\} \text{ s.t. } X_1^n\in G, X_2^n\in H|}{N}\\
&\qquad\qquad\qquad\qquad- \frac{|n\in \{1, \dots, N\} \text{ s.t. } X_1^n\in G|}{N}\cdot \frac{|n\in \{1,\dots, N\} \text{ s.t. } X_2^n\in H|}{N}\Bigg|\\
&=:\sqrt{d}\,\tilde{T}_N(\pi).
\end{align*}

Up to the constant $\sqrt{d}$ the term $\tilde{T}_N(\pi)$ is a classical non-parametric estimator for independence of $\mu$ and $\nu$, see e.g. \cite{gretton2010consistent}.
More precisely \cite[Theorem 1]{gretton2010consistent} states that under the assumption $\pi=\mu\otimes \nu$ one has

\begin{align}\label{eq:estimate}
\begin{split}
\P( \tilde{T}_N(\pi) \ge \epsilon)&\le 2^{(|\Phi^N|+1)^2} \exp\left(-\frac{N \varepsilon^{2}}{ 2}\right)\\
&=\exp\left(  \log(2) (|\Phi^N|+1)^2-\frac{\varepsilon^{2}N}{ 2} \right)
\end{split}
\end{align}
for any $\epsilon>0$. We thus conclude that
\begin{align*}
\begin{split}
P\left(\int \mathcal{W}\left(\picausal[x_1],\picausal[]_2\right)\,\picausal[]_1(dx_1) \ge \epsilon \right)&\le 
P( \sqrt{d}\, \tilde{T}_N(\pi) \ge \epsilon)\\
&\le \exp\left(  \log(2) (|\Phi^N|+1)^2-\frac{\varepsilon^{2}N}{2d}\right).
\end{split}
\end{align*}
This shows the first claim.

In particular choosing $\epsilon= 2\sqrt{d\log(2)}\,(|\Phi^N|+1)/\sqrt{N}$ in \eqref{eq:estimate} yields
\begin{align*}
P\left( \tilde{T}_N(\pi) \ge 2\sqrt{d\log(2)}\, \frac{|\Phi^N|+1}{\sqrt{N}}\right)\le  \exp(-|\Phi^N|^2), 
\end{align*}
which is summable by assumption. Thus a Borel-Cantelli argument implies that
\begin{align*}
\tilde{T}_N(\pi) \le 2\sqrt{d\log(2)}\, \frac{|\Phi^N|+1}{\sqrt{N}}
\end{align*}
for all sufficiently large $N\in \N$. Lastly, noting that by the law of large numbers
\begin{align*}
\lim_{N\to\infty} \int |y-z|\,\picausal[]_2(dy)\,\picausal[]_2(dz)= \int |y-z|\,\nu(dy)\,\nu(dz),
\end{align*}
where the term on the right is positive by assumption, we can choose an appropriate constant $C(\nu)>0$ in order to obtain
\begin{align*}
\overrightarrow{\mathcal{W}}(\picausal) \le C(\nu)\frac{|\Phi^N|}{\sqrt{N}}
\end{align*}
for all $N\in \N$ sufficiently large. On the other hand, if $\pi\neq \mu\otimes\nu$, then $\mathcal{AW}$-consistency of $\picausal$ implies that $$\int \mathcal{W}( \picausal[G], \picausal[]_2) \,\picausal[]_1(dx_1)$$ does not converge to zero, so that there exists $\delta>0$ such that $\overrightarrow{\mathcal{W}}(\picausal)\ge \delta$ for all $N\in \N$ sufficiently large.
\end{proof}

Let us next recall the following lemma, which is used in the proof of Corollary \ref{cor:go}.
\begin{lemma}[{\cite[Theorem 3]{gretton2010consistent}}]\label{prop:1}
Under the assumption that $\mu$ and $\nu$ are non-atomic and $\pi=\mu\otimes\nu$, there exists a centering sequence $$C_N= C_N(\mu,\nu)\le\sqrt{\frac{2}{\pi}}\frac{|\Phi^N|}{\sqrt{N}} $$ such that
\begin{align*}
\sqrt{N} (\tilde{T}_N(\pi)-C_N)/\sigma \Rightarrow \mathcal{N}(0,1),
\end{align*}
where $\sigma=1-2/\pi$ and
\begin{align*}
\tilde{T}_N(\pi)&:=
\sum_{G\in \Phi^N} \sum_{H\in\Phi^N} \Bigg|\frac{|n\in \{1,\dots, N\} \text{ s.t. } X_1^n\in G, X_2^n\in H|}{N}\\
&\qquad\qquad- \frac{|n\in \{1, \dots, N\} \text{ s.t. } X_1^n\in G|}{N}\cdot \frac{|n\in \{1,\dots, N\} \text{ s.t. } X_2^n\in H|}{N}\Bigg|.
\end{align*}
\end{lemma}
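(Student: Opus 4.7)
The lemma is a direct restatement of \cite[Theorem 3]{gretton2010consistent} once the notation is matched: $\tilde{T}_N(\pi)$ is precisely the histogram-based $L^1$ test statistic for independence on the grid $\Phi^N$ studied there. The cleanest plan is therefore to make this identification explicit and invoke the cited result. For a self-contained sketch, I would carry out the following three ingredients of the proof.

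First, I would rewrite the per-cell discrepancy
\begin{align*}
Z_N(G,H) &:= \frac{1}{N}\sum_{n=1}^N \mathds{1}_{\{X_1^n\in G,\ X_2^n\in H\}} \\
&\quad - \left(\frac{1}{N}\sum_{n=1}^N \mathds{1}_{\{X_1^n\in G\}}\right)\!\left(\frac{1}{N}\sum_{n=1}^N \mathds{1}_{\{X_2^n\in H\}}\right),
\end{align*}
which is mean-zero under $\pi=\mu\otimes\nu$, and linearise it around the population values $p:=\mu(G)$, $q:=\nu(H)$ as
\begin{align*}
Z_N(G,H) = \Delta^{\pi}_N(G,H) - q\,\Delta^{\mu}_N(G) - p\,\Delta^{\nu}_N(H) + O_P(N^{-1}),
\end{align*}
where $\Delta^{\pi}_N, \Delta^{\mu}_N, \Delta^{\nu}_N$ are the standard empirical deviations. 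This casts $\sqrt{N}\,Z_N(G,H)$ as a normalised average of i.i.d.\ mean-zero random variables; a multivariate CLT then yields joint asymptotic normality of the vector $(\sqrt{N}\,Z_N(G,H))_{G,H\in \Phi^N}$, provided $|\Phi^N|$ grows sufficiently slowly.

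Second, because $\tilde T_N(\pi) = \sum_{G,H\in \Phi^N} |Z_N(G,H)|$ involves the non-smooth functional $x\mapsto |x|$, I would exploit the folded-normal identity: if $Z\sim\mathcal{N}(0,\sigma_Z^2)$ then $\mathbb{E}|Z|=\sigma_Z\sqrt{2/\pi}$ and $\mathrm{Var}(|Z|)=\sigma_Z^2(1-2/\pi)$. Summing the individual cell contributions, together with an accounting of the covariances between cells (tractable thanks to the multinomial structure of the partition and collapsing to leading order), one reads off the centering $C_N$, for which $\sigma_Z\le 1$ on each cell gives the bound $C_N\le \sqrt{2/\pi}\,|\Phi^N|/\sqrt{N}$, as well as the limiting scale $\sigma = 1-2/\pi$.

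The main obstacle is that one cannot straightforwardly apply the delta method to the non-differentiable functional $x\mapsto |x|$ while simultaneously letting $|\Phi^N|^2$ grow with $N$. The technical heart of the argument — and the reason one ultimately defers to \cite[Theorem 3]{gretton2010consistent} — is a uniform control of the remainder in the linearisation above, combined with a Lyapunov-type bound guaranteeing that the normalised sum $\sqrt{N}(\tilde T_N(\pi) - C_N)/\sigma$ over the growing partition still converges to $\mathcal{N}(0,1)$; given that reference, the statement of the lemma is obtained by the direct dictionary $\Phi^N \leftrightarrow$ their histogram partition, $(X_1^n,X_2^n) \leftrightarrow$ their sample pairs, and $\tilde T_N(\pi) \leftrightarrow$ their $L^1$ statistic.
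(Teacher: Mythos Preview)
Your proposal is correct and matches the paper's approach: the paper does not prove this lemma at all but simply recalls it verbatim from \cite[Theorem 3]{gretton2010consistent}, exactly as you do in your first paragraph. The additional self-contained sketch you provide goes beyond what the paper offers, but since the lemma is treated purely as an external citation, there is nothing further to compare.
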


While the estimate of $|f((\hat{\pi}^N)^2)-f(\nu)|$ in terms of $\mathcal{W}(\pi, \hat{\pi}^N)$ is useful for the proof of Corollary \ref{cor:consistency}, the following result provides sharper convergence rates for the case $\hat{\pi}^N=\picausal$:

\begin{lemma}\label{lem:O}
We have
$$\left(\sqrt{N}\wedge \frac{1}{2\cdot \sup_{x} |\varphi^N(x)-x|} \right)\left(f(\picausal[]_2)-f(\nu)\right)=O_P(1).$$
\end{lemma}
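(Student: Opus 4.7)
The plan is to split the error $f(\picausal_2)-f(\nu)$ into a discretisation piece and an empirical piece, and to exploit the fact that the two factors in the product on the left are tailored precisely to these two pieces.

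Concretely, I would introduce the classical (non-discretised) empirical measure $\hat{\nu}^N := \frac{1}{N}\sum_{n=1}^N \delta_{X_2^n}$ and write
\begin{align*}
f(\picausal[]_2) - f(\nu) = \bigl( f(\picausal[]_2) - f(\hat{\nu}^N) \bigr) + \bigl( f(\hat{\nu}^N) - f(\nu) \bigr).
\end{align*}
For the first (discretisation) term, I unfold $f$ as a double sum and apply the reverse triangle inequality pointwise: for every pair $(n,m)$ one has $\bigl| |\varphi^N(X_2^n)-\varphi^N(X_2^m)| - |X_2^n-X_2^m|\bigr| \le 2\sup_x|\varphi^N(x)-x|$, hence
\begin{align*}
|f(\picausal[]_2) - f(\hat{\nu}^N)| \le 2\sup_x|\varphi^N(x)-x|.
\end{align*}
For the second (empirical) term, I observe that $f(\hat{\nu}^N)$ is a V-statistic of order two with bounded kernel (on $[0,1]^d$ the kernel $(y,z)\mapsto |y-z|_2$ is bounded by $\sqrt{d}$), whose expectation is $f(\nu)$; thus by the standard $\sqrt{N}$-CLT for V-statistics, $\sqrt{N}\,(f(\hat{\nu}^N)-f(\nu)) = O_P(1)$.

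To conclude, I multiply by the prefactor and use the trivial bound $a\wedge b \le a$ or $a\wedge b\le b$ piece by piece:
\begin{align*}
\Bigl(\sqrt{N}\wedge \tfrac{1}{2\sup_x|\varphi^N(x)-x|}\Bigr)\bigl|f(\picausal[]_2)-f(\nu)\bigr|
&\le \tfrac{1}{2\sup_x|\varphi^N(x)-x|}\cdot 2\sup_x|\varphi^N(x)-x| \\
&\quad + \sqrt{N}\cdot\bigl|f(\hat{\nu}^N)-f(\nu)\bigr| \\
&= 1 + O_P(1) = O_P(1).
\end{align*}
There is no real obstacle: the only mild subtlety is applying the V-statistic CLT to the V-statistic $f(\hat{\nu}^N)$, which is standard since $(y,z)\mapsto|y-z|_2$ is bounded and symmetric on $[0,1]^d$. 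The whole lemma is essentially a bookkeeping argument, and the asymmetric prefactor $\sqrt{N}\wedge\frac{1}{2\sup_x|\varphi^N(x)-x|}$ is chosen precisely so that each summand above absorbs exactly one of the two error contributions.
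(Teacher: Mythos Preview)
Your proof is correct and follows essentially the same route as the paper: the paper likewise bounds the discretisation error by $2\sup_x|\varphi^N(x)-x|$ via the triangle inequality and then applies the CLT for U-statistics to the kernel $h(y,z)=|y-z|_2$ on the empirical part, which is equivalent to your V-statistic argument since the diagonal of this kernel vanishes. If anything, your write-up is slightly cleaner in that you explicitly control the absolute value, whereas the paper only spells out the upper bound.
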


\begin{proof}
We note that
\begin{align*}
f(\picausal[]_2)-f(\nu)&= \int |y-z|_2\, \picausal[]_2(dy)\, \picausal[]_2(dz) -\int |y-z|_2\,\nu(dy)\,\nu(dz)\\
&=\frac{1}{N^2} \sum_{i,j=1}^N |\varphi^N(X_2^i)- \varphi^N(X_2^j)| -\int |y-z|_2\,\nu(dy)\,\nu(dz)\\
&=\frac{1}{N^2} \sum_{i,j=1, \ i\neq j}^N |\varphi^N(X_2^i)- \varphi^N(X_2^j)| -\int |y-z|_2\,\nu(dy)\,\nu(z)\\
&\le 2\cdot \sup_{x} |\varphi^N(x)-x|+\frac{1}{N^2} \sum_{i,j=1, \ i\neq j}^N |X_2^i- X_2^j| -\int |y-z|_2\,\nu(dy)\,\nu(dz).
\end{align*}
Using the CLT for U-statistics for the kernel $h(y_1, y_2)=|y_1-y_2|$ (see e.g. \cite[Theorem 12.3]{van1998asymptotic}) we conclude that 
$$\sqrt{N}\left(\frac{1}{N^2} \sum_{i,j=1, \ i\neq j}^N |X_2^i- X_2^j| -\int |x_2-y|_2\,\nu(dx_2)\,\nu(dy)\right)=O_P(1),$$
which shows the claim.
\end{proof}
In the following sections we discuss convergence rates of the estimator $\overrightarrow{\mathcal{W}}(\picausal)$, first for the independent case $\pi=\mu\otimes\nu$ and subsequently for the general case.

Lastly, for the proofs in Section \ref{sec:5} we state here some of the main results from \cite{backhoff2020estimating} for the convience of the reader:

\begin{lemma}[Average rate of $\mathcal{AW}(\pi, \picausal)$, see {\cite[Theorem 1.5]{backhoff2020estimating}}]
\label{thm:rates.unit.cube}
	Under Assumption \ref{ass:lipschitz.kernel}, there is a constant $C>0$ such that
	\begin{align}
	\label{eq:mean.speed.rate}
	\begin{split}
	E\Big[ \mathcal{AW}(\mu,\picausal)\Big]
	&\leq C \cdot
	\begin{cases}
	N^{-1/3} &\text{for } d=1,\\
	N^{-1/4}\log(N+1) &\text{for } d=2,\\
	N^{-1/(2d)} &\text{for } d\ge 3,
	\end{cases} \\
	&=:C \cdot	\mathop{\mathrm{rate}}(N) 
	\end{split}
	\end{align}
	for all $N\geq 1$.
\end{lemma}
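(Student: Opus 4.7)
The plan is to follow the strategy of \cite{backhoff2020estimating}: decompose $\mathcal{AW}(\pi,\picausal)$ via the triangle inequality into a \emph{deterministic discretization error} and a \emph{sampling error}, control each of them separately, and then balance the two by the choice of the exponent $r$ governing the mesh size.

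First I would introduce the auxiliary "population" discretization
\begin{align*}
\bar\pi^N \,:=\, \sum_{G\in\Phi^N}\mu(G)\,\delta_{c_G}\otimes\bar\pi_G,\qquad
\bar\pi_G\,:=\,\frac{1}{\mu(G)}\int_G\pi_{x_1}\,\mu(dx_1),
\end{align*}
where $c_G=\varphi^N(G)$ denotes the centre of a cube $G$ (and $\bar\pi_G:=\delta_0$ if $\mu(G)=0$), and apply
\begin{align*}
\mathcal{AW}(\pi,\picausal) \,\le\, \mathcal{AW}(\pi,\bar\pi^N) + \mathcal{AW}(\bar\pi^N,\picausal).
\end{align*}
For the first term I would plug in the coupling $(x_1,\varphi^N(x_1))_{\#}\mu\in\Pi(\mu,(\bar\pi^N)_1)$ into the definition of $\mathcal{AW}$. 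Its first-order part is bounded by $\sup_x|x-\varphi^N(x)|_2\le \sqrt{d}\,N^{-r}$. The second-order part is $\int \mathcal{W}(\pi_{x_1},\bar\pi_{G(x_1)})\,\mu(dx_1)$, and by convexity of $\mathcal{W}$ plus Assumption \ref{ass:lipschitz.kernel} (kernel Lipschitz with constant $L$) this is again bounded by $L\sqrt{d}\,N^{-r}$. Hence $\mathcal{AW}(\pi,\bar\pi^N)\le C_1 N^{-r}$ deterministically.

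Next I would tackle the sampling error $\mathcal{AW}(\bar\pi^N,\picausal)$, where now both measures live on $\{c_G\}_{G\in\Phi^N}\times[0,1]^d$. I would use the natural "identity-on-centres" coupling of the first marginals: the transport $\sum_G\mu(G)\delta_{c_G}\to\sum_G\widehat\mu^N(G)\delta_{c_G}$ can be bounded by $\sqrt{d}\sum_G|\mu(G)-\widehat\mu^N(G)|$, for which a standard multinomial concentration / Cauchy--Schwarz bound yields $\E[\sum_G|\mu(G)-\widehat\mu^N(G)|]\le\sqrt{|\Phi^N|/N}$. The conditional layer reduces to estimating
\begin{align*}
\E\Big[\sum_{G\in\Phi^N}\widehat\mu^N(G)\,\mathcal{W}(\widehat\pi^N_G,\bar\pi_G)\Big],
\end{align*}
and here, conditioning on $\{X_1^n\}_n$, the inner Wasserstein distance is that between the empirical distribution of $|\{n\,:\,X_1^n\in G\}|$ i.i.d. draws of $\bar\pi_G$ (plus a deterministic $O(N^{-r})$ quantisation in the second coordinate from $\varphi^N$) and $\bar\pi_G$ itself. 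The Fournier--Guillin / Weed--Bach empirical Wasserstein rates on $[0,1]^d$ give a bound of order $R(m)$ where $R(m)=m^{-1/2}$ for $d=1$, $R(m)=m^{-1/2}\log m$ for $d=2$, $R(m)=m^{-1/d}$ for $d\ge 3$. Concavity of $R$ and Jensen's inequality, combined with $\E[\widehat\mu^N(G)]=\mu(G)$, then give
\begin{align*}
\E\Big[\sum_G\widehat\mu^N(G)\,R(N\widehat\mu^N(G))\Big]\,\le\,R\!\left(\frac{N}{|\Phi^N|}\right).
\end{align*}

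Combining the previous displays with $|\Phi^N|=N^{rd}$ yields
\begin{align*}
\E[\mathcal{AW}(\pi,\picausal)] \,\le\, C\Bigl[N^{-r}+N^{(rd-1)/2}+R\!\bigl(N^{1-rd}\bigr)\Bigr],
\end{align*}
and the final step is to optimise $r$: substituting the expression for $R$ and minimising over $r$ (subject to $rd<1$) produces exactly $r=1/3$ for $d=1$ and $r=1/(2d)$ for $d\ge 2$, yielding $\mathrm{rate}(N)$ as stated. The main obstacle I anticipate is the conditional sampling step: handling the random sample count per cube rigorously (so that the Fournier--Guillin rate can be invoked cube-by-cube and then averaged), controlling cubes that receive very few or zero samples, and checking that the concavity/Jensen averaging goes through uniformly in $d$ with the correct logarithmic factor at the critical dimension $d=2$. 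Steps 1, 2 and the balancing step 4 are essentially routine once the conditional Fournier--Guillin bound and the concentration of the cube counts are in hand.
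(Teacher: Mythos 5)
The paper does not actually prove this lemma: it is imported verbatim as \cite[Theorem 1.5]{backhoff2020estimating} ``for the convenience of the reader'', so there is no internal proof to compare against. Your sketch is, in effect, a reconstruction of the cited paper's argument, and it matches the strategy that is visible in the fragments this paper does reproduce (the proof of the consistency lemma, where the quantities $\pi_G$, $\widehat\pi^N_G$, the rate function $R$, the conditional bound $R(N/|\Phi^N|)$ and the $\sigma$-field $\mathcal{G}^N$ all appear): split $\mathcal{AW}(\pi,\picausal)$ into a deterministic quantisation error of order $N^{-r}$ (using the coupling $(x_1,\varphi^N(x_1))_\#\mu$ and the Lipschitz kernel), a first-marginal multinomial term of order $\sqrt{|\Phi^N|/N}$, and a per-cube empirical Wasserstein term handled by Fournier--Guillin/Dudley rates plus concavity of $p\mapsto p\,R(Np)$ and Jensen, then balance with $|\Phi^N|=N^{rd}$. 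Your bookkeeping is correct: $r=1/3$ for $d=1$ and $r=1/(2d)$ for $d\ge 2$ make all three terms of order $\mathop{\mathrm{rate}}(N)$, with the logarithm appearing exactly at $d=2$.

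Two cautions on the points you yourself flagged. First, conditioning on the exact values $\{X_1^n\}_n$ does \emph{not} make the second coordinates within a cube $G$ i.i.d.\ with law $\bar\pi_G$ (their laws are $\pi_{X_1^n}$); the correct conditioning is on the coarser information recording only cube membership/counts (the $\mathcal{G}^N$ of the cited work), or else one must invoke the Lipschitz kernel once more to swap $\pi_{X_1^n}$ for $\bar\pi_G$ at an extra $O(N^{-r})$ cost. Second, in the adapted distance the first-layer transport between distinct centres also incurs a conditional-layer cost; since that cost is bounded by $\mathop{\mathrm{diam}}([0,1]^d)=\sqrt d$, it is absorbed into the total-variation-type term $\sum_G|\mu(G)-\widehat\mu^N(G)|$, so your bound survives, but this step should be made explicit. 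Neither issue changes the outcome; they are precisely where the proof in \cite{backhoff2020estimating} spends its effort.
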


In the theorem above, the constant $C$ depends on $d$ and the Lipschitz-constant in Assumption \ref{ass:lipschitz.kernel}. Furthermore, \cite{{backhoff2020estimating}} also show the following concentration inequality:

\begin{lemma}[Deviation of $\mathcal{AW}(\pi, \picausal)$, see {\cite[Theorem 1.7]{backhoff2020estimating}}]
\label{thm:deviation}
	Under Assumption \ref{ass:lipschitz.kernel}, there are constants $c,C>0$ such that 
	\[ P\Big[ \mathcal{AW}(\mu,\picausal)  \geq C\mathop{\mathrm{rate}}(N)+\varepsilon \Big]
	\leq 4\exp\Big( -cN\varepsilon^2 \Big) \]
	for all $N\geq 1$ and all $\varepsilon>0$.
\end{lemma}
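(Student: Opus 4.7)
This lemma is a restatement of \cite[Theorem 1.7]{backhoff2020estimating}, so the proof in this paper would simply cite that reference. Let me nevertheless sketch the natural strategy one would adopt if proving it directly.

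The plan is to decompose $\mathcal{AW}(\mu,\picausal)$ into its mean plus a fluctuation, and control each piece separately. The mean is already handled by Lemma \ref{thm:rates.unit.cube}, which provides $E[\mathcal{AW}(\mu,\picausal)] \le C\,\mathop{\mathrm{rate}}(N)$. For the fluctuation, I would apply McDiarmid's bounded-differences inequality to the random variable $\mathcal{AW}(\mu,\picausal)$ viewed as a function of the i.i.d.\ samples $(X_1^1,X_2^1),\dots,(X_1^N,X_2^N)$. Combining the two bounds yields the stated inequality, with the constant $4$ in front of the exponential absorbing the factor of $2$ from a two-sided McDiarmid application (together with the translation by the mean term $C\,\mathop{\mathrm{rate}}(N)$).

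The main technical step is to verify the bounded-differences property: replacing a single sample $(X_1^n,X_2^n)$ by an independent copy changes $\mathcal{AW}(\mu,\picausal)$ by at most $c_d/N$ for some $c_d$ depending only on the dimension. Since $\picausal$ is supported on the finite grid $\varphi^N([0,1]^d)^2$ and the ambient space $[0,1]^d$ has diameter $\sqrt{d}$, one can obtain such a bound by constructing an explicit coupling between the two perturbed versions of $\picausal$ in the definition \eqref{eq:causal} of $\mathcal{AW}$: the coupling transports mass only between the atoms affected by the changed sample, so the cost at both the $x_1$-marginal transport stage and the conditional inner transport stage is of order $\sqrt{d}/N$.

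The hard part will be making the inner transport bookkeeping rigorous, because the disintegration $(\picausal)_{x_1}$ changes when a sample moves between cubes of the partition $\Phi^N$. Here the explicit empirical form of the conditional distribution (an average of point masses indexed by the samples falling in each cube) is essential and makes the perturbation analysis tractable. Once the $O(1/N)$ bounded-differences constant is established, McDiarmid gives sub-Gaussian concentration around the mean, and combining with the average-rate bound from Lemma \ref{thm:rates.unit.cube} completes the argument.
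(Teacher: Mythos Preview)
Your assessment is correct: the paper does not prove this lemma at all. It is stated in the appendix purely ``for the convenience of the reader'' as a quotation of \cite[Theorem 1.7]{backhoff2020estimating}, with no accompanying argument. Your opening sentence already captures exactly what the paper does, so there is nothing further to compare. The McDiarmid-plus-mean-rate sketch you add is the natural route (and is indeed the strategy used in the cited reference), but it goes beyond what the present paper contains.
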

As above, the constants $c,C$ depend on $d$, and the Lipschitz constant in Assumption \ref{ass:lipschitz.kernel}. \\

\section{Remaining numerical examples}\label{sec:app_b}

Lastly we consider the distribution of 
\begin{align}\label{eq:hist}
\frac{\sqrt{N}}{\sigma}\left(\int \mathcal{W}\left(\picausal[x_1],\picausal[]_2\right)\,\picausal[]_1(dx_1)-C_N(\nu)\right),
\end{align}
which we bounded stochastically by a standard normal distribution in Corollary \ref{cor:go}. The empirical study in Figure \ref{fig:hist} shows that even if the constant $C_N(\mu,\nu)$ from Lemma \ref{prop:1} is chosen such that the distribution is approximately centralised, \eqref{eq:hist} seems to possess slimmer tails than the standard normal distribution. In conclusion it seems unlikely that a CLT holds.

\begin{figure}[h!]
\begin{center}
\begin{minipage}{0.48\textwidth}
\includegraphics[scale=0.27]{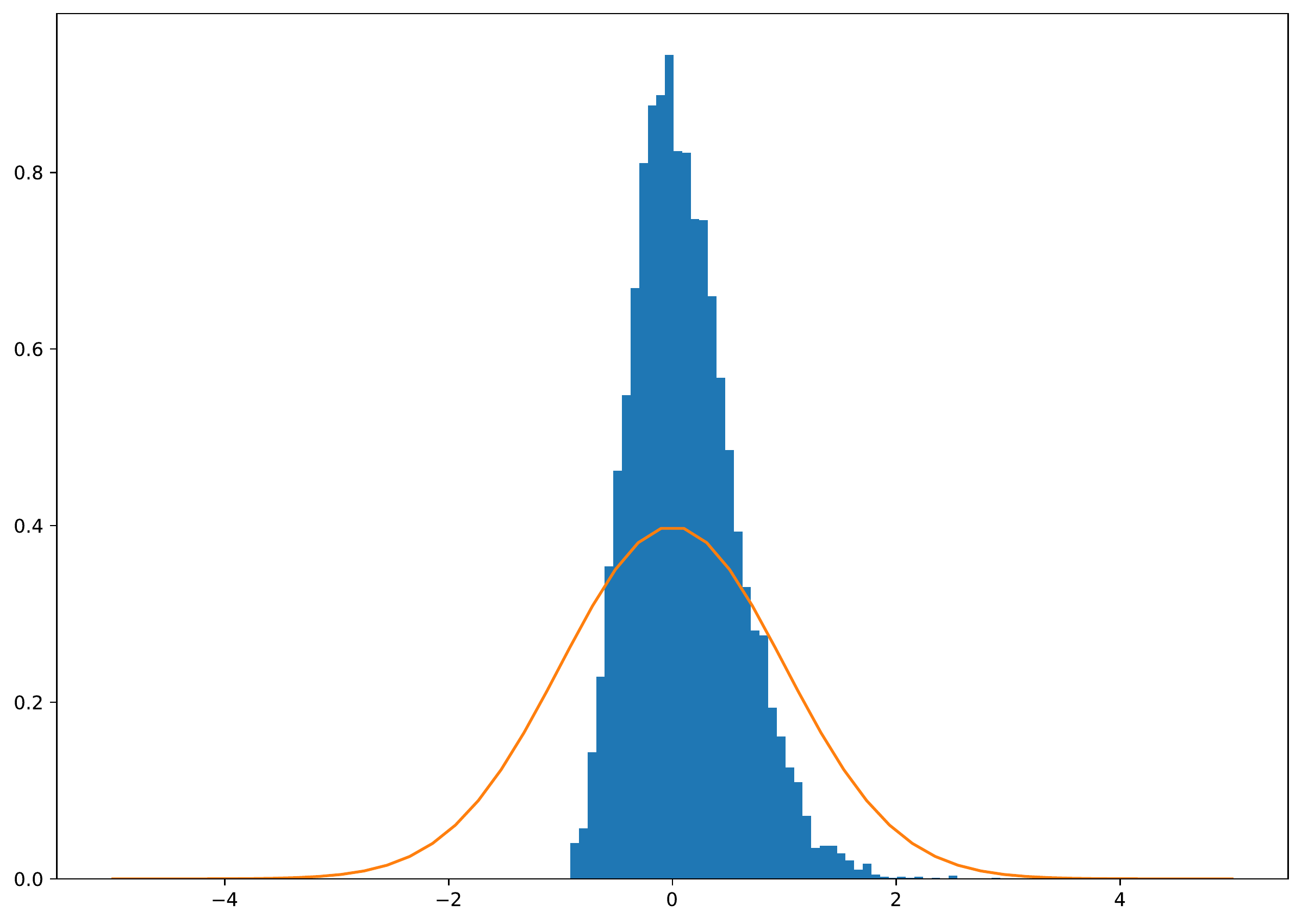}
\end{minipage}
\quad
\begin{minipage}{0.48\textwidth}
\includegraphics[scale=0.43]{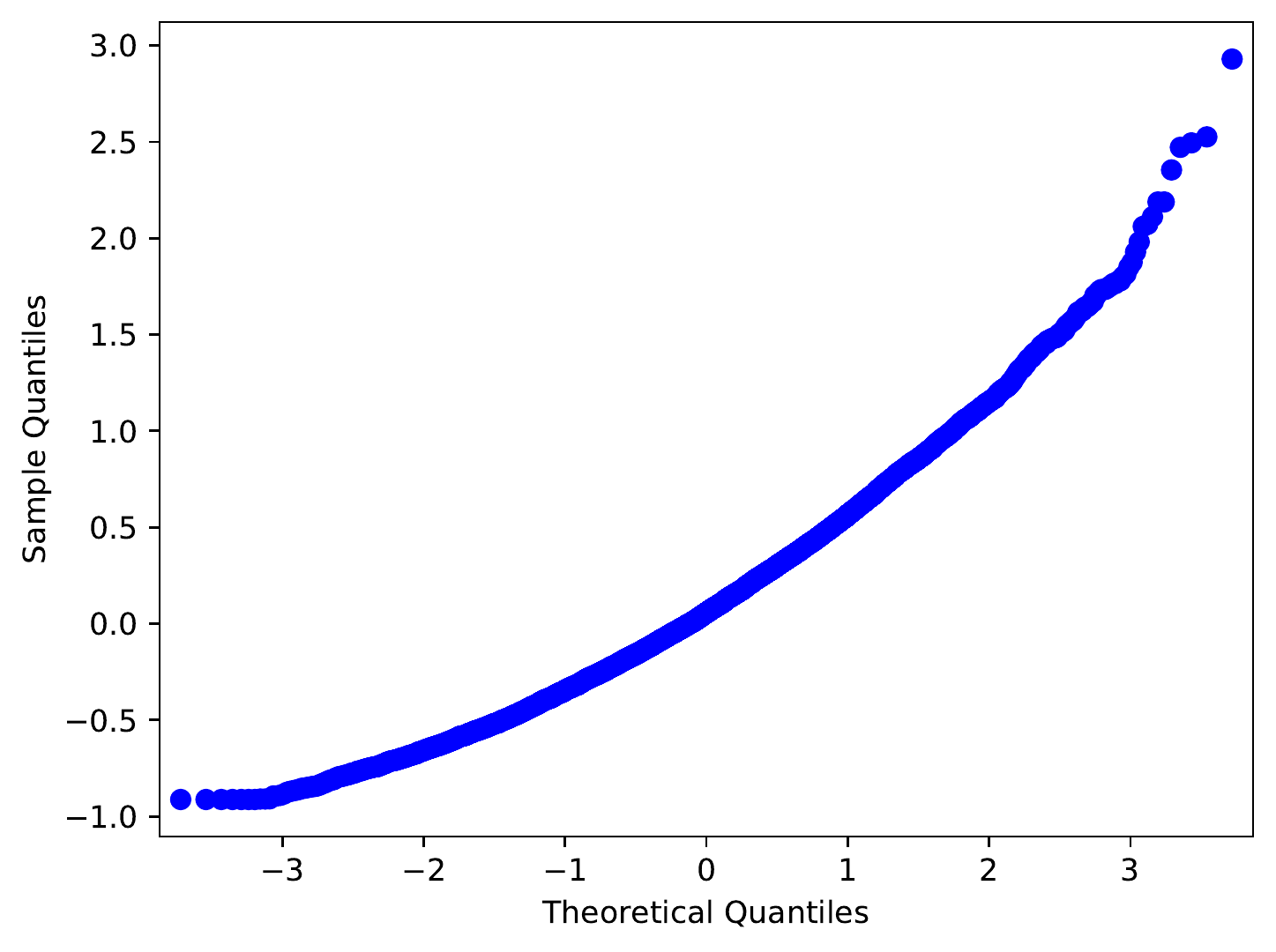}
\end{minipage}
\caption{Comparison of the (shifted) histogram of \eqref{eq:hist} for 500 samples and $10^4$ draws with a standard normal density function.}\label{fig:hist}
\end{center}
\end{figure}
\end{appendix}

\end{document}